\newcommand{\N}{\mathbb N}
\newcommand{\Z}{\mathbb{Z}}
\newcommand{\Zd}{\mathbb{Z}^d}
\newcommand{\Q}{\mathbb{Q}}
\newcommand{\R}{\mathbb{R}}
\newcommand{\Rd}{\mathbb{R}^d}
\renewcommand{\P}{\mathbb{P}}
\newcommand{\E}{\mathbb{E}}
\newcommand{\Ed}{\mathbb{E}^d}
\newcommand{\Card}[1]{\vert #1 \vert}
\newcommand{\1}{1\hspace{-1.3mm}1}
\newcommand{\bor}[1][]{\mathcal{B}(#1)}
\newcommand{\Ebarre}{\overline{\mathbb{E}}}
\newcommand{\Pbarre}{\overline{\mathbb{P}}}
\newcommand{\Qbarre}{\overline{\mathbb{Q}}}
\newcommand{\D}{\mathcal{D}}
\newcommand{\T}[2]{{#1}.{#2}}
\renewcommand{\epsilon}{\varepsilon}
\renewcommand{\phi}{\varphi}
\renewcommand{\limsup}{\overline{\lim}}
\renewcommand{\liminf}{\underline{\lim}}
\newcommand{\miniop}[3]{
\renewcommand{\arraystretch}{0.6}
\begin{array}{c}
{\scriptstyle #1}\\
#2\\
{\scriptstyle #3}
\end{array}
\renewcommand{\arraystretch}{1}}
\begin{document}

{
\newtheorem{theorem}{Théorème}[section]
\newtheorem{conjecture}[theorem]{Conjecture}

}
\newtheorem{lemme}[theorem]{Lemme}
\newtheorem{defi}[theorem]{Définition}
\newtheorem{coro}[theorem]{Corollaire}
\newtheorem{rem}[theorem]{Remarque}
\newtheorem{prop}[theorem]{Proposition}
\newcommand{\finpreuve}{}
\newcommand{\debutpreuve}{Démonstration }

\title[Processus de contact en environnement aléatoire]{La forme asymptotique du processus de contact en environnement aléatoire}

{
\author{Olivier Garet}
\address{Institut \'Elie Cartan Nancy (mathématiques)\\
Université Henri Poincaré Nancy 1\\
Campus Scientifique, BP 239 \\
54506 Vandoeuvre-lès-Nancy  Cedex France\\}
\email{Olivier.Garet@iecn.u-nancy.fr}
\author{Régine Marchand}
\email{Regine.Marchand@iecn.u-nancy.fr}

}

\def\motsclefs{Random growth, contact process, random environment, shape theorem}

\subjclass[2000]{60K35, 82B43.}

\begin{abstract}
Le but de cet article est d'établir des théorèmes de forme asymptotique pour le processus de contact en environnement aléatoire stationnaire, généralisant ainsi des résultats connus pour le processus de contact en environnement déterministe. En particulier, on montre  que pour presque toute réalisation de l'environnement aléatoire et pour presque toute réalisation du processus de contact telle que le processus survit, l'ensemble  $H_t$  des points qui ont été occupés avant le temps $t$ est tel que $H_t/t$ converge vers un compact qui ne dépend que de la loi de l'environnement.
La preuve utilise un nouveau théorème ergodique presque sous-additif.

\vspace{0.1cm} 
\emph{Mots clefs}: croissance aléatoire, processus de contact, environnement aléatoire, théorème ergodique sous-additif, théorème de forme asymptotique
\vspace{0.2cm}

\noindent\textsc{Abstract.} \foreignlanguage{english}{
The aim of this article is to prove asymptotic shape theorems for the contact process in stationary random environment. These theorems generalize known results for the classical contact process. In particular, if $H_t$ denotes the set of already occupied sites at time $t$, we show that for almost every environment, when the contact process survives, the set  $H_t/t$ almost surely converges to a compact set that only depends on the law of the environment. To this aim, we prove a new almost subadditive ergodic theorem.
}

\vspace{0.1cm}
\foreignlanguage{english}{\emph{Key words}: Random growth, contact process, random environment, subadditive ergodic theorem, shape theorem}

\end{abstract}


{\maketitle
}
\setcounter{tocdepth}{1}

\section{Introduction}
Le but de cet article est d'obtenir un théorème de forme asymptotique pour le processus de contact dans un environnement aléatoire en dimension~$d$. 
Dans notre cas, l'environnement est donné par une famille de variables
aléatoires 
$(\lambda_e)_{e\in\Ed}$ indexée par l'ensemble $\Ed$ des arêtes du réseau cubique $\Zd$, la variable aléatoire $\lambda_e$ représentant 
le taux de naissance sur l'arête~$e$ tandis que les taux de mort sont tous
égaux à 1. La loi des  $(\lambda_e)_{e\in\Ed}$ est supposée stationnaire et ergodique.

Notre principal résultat  est le suivant: si on suppose que les $(\lambda_e)_{e\in\Ed}$ prennent leurs valeurs au dessus
de $\lambda_c(\Zd)$, le paramètre critique pour la possibilité de survie du processus de contact ordinaire sur $\Zd$, alors il existe une norme $\mu$ sur $\Rd$ telle que, pour presque tout environnement $\lambda=(\lambda_e)_{e\in\Ed}$, l'ensemble $H_t$ des points déjà infectés au moins une fois au temps $t$ vérifie
$$\Pbarre_{\lambda}\left(\exists T>0 \; t\ge T\Longrightarrow (1-\epsilon)tA_{\mu}\subset H_t \subset (1+\epsilon)tA_{\mu}\right)=1,$$
où $A_{\mu}$ est la boule unité de la norme $\mu$, et 
$\Pbarre_{\lambda}$ la loi du processus de contact en environnement $\lambda$,
conditionné à survivre. On retrouve donc un théorème de forme asymptotique analogue à celui gouvernant la croissance du processus de contact surcritique sur $\Zd$ en environnement déterministe.

Jusqu'à présent, les travaux concernant le processus de contact en environnement aléatoire  sont essentiellement consacrés à la détermination de conditions assurant la survie (Liggett~\cite{MR1159569}, Andjel~\cite{MR1203176}, Newman et Volchan~\cite{MR1387642}), ou l'extinction (Klein~\cite{MR1303643}) du processus de contact. 
Par ailleurs, la plupart d'entre eux traitent de la dimension un. 
Ainsi, Bramson, Durrett et Schonman~\cite{MR1112403} montrent qu'en dimension un et en environnement aléatoire, une croissance sous-linéaire est possible. 
Ils conjecturent en revanche qu'un théorème de forme asymptotique devrait pouvoir être obtenu en dimension $d\ge 2$, dès que la survie du processus de contact est possible.

Rappelons les deux étapes de la preuve du résultat de forme asymptotique dans le cas du processus de contact en environnement déterministe:
\begin{itemize}
 \item En 1982, Durrett et Griffeath~\cite{MR656515} montrent le résultat pour les grandes valeurs du taux de naissance $\lambda$. Pour ces grandes valeurs, ils obtiennent des estimées garantissant essentiellement que la croissance est d'ordre linéaire, puis utilisent des techniques (presque) sous-additives pour en déduire le résultat de forme asymptotique
\item Plus tard, Bezuidenhout et Grimmett~\cite{MR1071804} montrent qu'un processus de contact surcritique sur $\Zd$, vu à grande échelle, domine stochastiquement une percolation orientée surcritique. Ils indiquent également comment leur construction peut être utilisée pour prolonger le résultat de forme asymptotique dans toute la zone surcritique. Cette dernière étape, qui consiste à montrer que les estimées utilisées dans~\cite{MR656515} s'étendent à tout le régime surcritique est faite en détail par Durrett dans~\cite{MR940469}. 
\end{itemize}

Dans le cas de l'environnement aléatoire, on retrouve ces deux aspects du problème. Le défi certainement le plus difficile consiste à montrer que dès que la survie est possible, la croissance du processus de contact est d'ordre linéaire; ceci correspondrait à montrer l'équivalent en environnement aléatoire du résultat de Bezuidenhout et Grimmett~\cite{MR1071804}. Un autre problème est de montrer que sous des conditions garantissant que la croissance est d'ordre linéaire, on peut obtenir un théorème de forme asymptotique: c'est l'analogue de Durrett et Griffeath~\cite{MR656515}, et le problème auquel nous nous intéressons ici. 

Nous avons ainsi choisi d'imposer des conditions sur l'environnement aléatoire permettant d'obtenir, à l'aide de techniques classiques, des estimées similaires à celles requises dans~\cite{MR656515}: en effet, la démonstration de l'existence d'une forme asymptotique en milieu aléatoire à partir de ces estimées recèle déjà en elle-même de sérieuses difficultés.

En général, les théorèmes de forme asymptotique pour des modèles
de croissance se prouvent grâce à la théorie des processus sous-additifs initiée par Hammersley et Welsh~\cite{MR0198576}, et en particulier grâce
au théorème ergodique sous-additif de Kingman~\cite{MR0356192} et à ses différentes extensions. L'exemple le plus évident est certainement celui 
du théorème de forme asymptotique pour la percolation de premier passage sur $\Zd$ (voir aussi les diverses extensions de ce modèle: Boivin~\cite{MR1074741}, Garet et Marchand~\cite{MR2085613}, Vahidi-Asl et Wierman~\cite{MR1166620}, Howard et Newmann~\cite{MR1452554}, Howard~\cite{MR2023652}, Deijfen~\cite{MR1970474}).

Parmi les modèles relevant
de la théorie sous-additive, on peut distinguer deux familles. La première, et la plus
fréquemment étudiée, est celle des modèles permanents: la forme occupée au 
temps $t$ ne fait que croître et il n'y a pas d'extinction possible (modèles de Richardson~\cite{MR0329079}, modèle des grenouilles de Alves et al.~\cite{MR1910638,MR1893139} et des marches aléatoires branchantes de Comets et Popov~\cite{MR2303944}). Dans ces modèles, la partie essentielle du travail consiste à montrer que la croissance est au moins linéaire, la sous-additivité permettant alors
d'obtenir la convergence désirée.

La seconde famille est celle des modèles non permanents, autrement dit ceux où l'extinction est possible. Dans ce cas, c'est en
conditionnant par la survie que l'on espère obtenir un théorème
de forme asymptotique.
Les difficultés induites par la possibilité d'extinction ont
été soulignées dès la genèse de la théorie sous-additive, en particulier
par Hammersley lui-même~\cite{MR0370721}: en effet, si l'on veut démontrer que les
temps d'atteinte $(t(x))_{x\in\Zd}$ des différents points
du réseau sont tels que $t(nx)/n$ converge, la théorie de Kingman requiert 
que la famille des  variables aléatoires $t(x)$ est stationnaire 
(en un sens à préciser) et intégrable. Bien sûr, si l'extinction est possible,
il n'y a pas intégrabilité puisque les temps d'atteinte peuvent être infinis.
En revanche, si l'on conditionne par la survie, les propriétés d'indépendance, de stationnarité, voire de sous-additivité peuvent être perdues. Un premier lemme de presque sous-additivité est
proposé par Kesten dans la discussion de l'article de Kingman~\cite{MR0356192},
puis étendu par Hammersley~\cite{MR0370721} (page 674). Plus tard, on trouvera d'autres
types d'hypothèses (voir par exemple Derriennic~\cite{MR704553}, Derriennic et Hachem~\cite{MR939537}, et Schürger~\cite{MR833959,MR1127716}). 

Le processus de contact fait clairement partie de la seconde famille.
C'est sur le lemme de Kesten-Hammersley que s'appuient Bramson et Griffeath~\cite{MR606980,MR578279}, puis Durrett et Griffeath~\cite{MR656515} pour démontrer
leurs théorèmes de forme asymptotique. Cependant, leur preuve, quoique partiellement corrigée dans~\cite{MR940469}, contient un certain nombre d'erreurs liées au conditionnement. La stratégie que nous développons, différente du fait de l'environnement aléatoire, offre 
une preuve alternative du théorème de forme asymptotique pour le processus de contact en environnement déterministe. 

Bien-sûr, le caractère aléatoire de l'environnement
induit des difficultés supplémentaires. Pour parler simplement, si l'on
travaille à environnement fixé, toute stationnarité spatiale est perdue.
En revanche, si l'on travaille en environnement moyenné, c'est le caractère
markovien du processus de contact qui fait défaut.
Le lemme de Kesten et Hammersley ne peut donc pas être utilisé directement
puisque ce dernier réclame à la fois de la stationnarité et une forme
d'indépendance.
Nous introduisons ici une nouvelle quantité $\sigma(x)$, que l'on peut voir comme un temps de régénération, et qui représente un  moment où le site $x$ est occupé par un individu
dont la descendance est infinie. Ce $\sigma$ vérifie certaines propriétés de stationnarité et de presque sous-additivité qui faisaient défaut au temps d'atteinte $t(x)$ et qui permettront de reformuler le problème dans un cadre de théorie ergodique presque sous-additive. Nous établissons alors, avec des techniques inspirées de Liggett, un théorème ergodique presque sous-additif général, qui nous permet d'obtenir le théorème de forme asymptotique pour $\sigma$.
Finalement, en contrôlant l'écart  entre le temps d'atteinte $t(x)$ et $\sigma(x)$, on pourra transférer à $t$ les résultats obtenus pour $\sigma$.

\section{Modèle et résultats}
\label{construction}

\subsection{Environnement}

Dans tout l'article, on notera $\|.\|_1$ et $\|.\|_{\infty}$ les normes sur $\Rd$ respectivement définies par $\|x\|_1=\sum_{i=1}^d |x_i|$ et $\|x\|_{\infty}=\miniop{}{\max}{1\le i\le d} |x_i|$. La notation $\|.\|$ sera utilisée pour désigner une norme quelconque.

Soit $\lambda_{\min}$ et $\lambda_{\max}$ deux réels fixés,
avec $\lambda_c(\Zd)<\lambda_{\min}\le\lambda_{\max}$, où 
$\lambda_c(\Zd)$ est le taux de naissance critique pour la survie du processus de contact usuel sur $\Zd$.
Dans toute la suite, on se limitera à l'étude du processus de contact en environnement aléatoire avec des taux de naissance $\lambda=(\lambda_e)_{e \in \Ed}$ appartenant à
l'ensemble des environnements $\Lambda=[\lambda_{\min},\lambda_{\max}]^{\Ed}$. Un environnement est donc une collection $\lambda=(\lambda_e)_{e \in \Ed} \in \Lambda$.

\medskip
Soit $\lambda \in \Lambda$ un environnement fixé. Le processus de contact $(\xi_t)_{t\ge 0}$  dans l'environnement $\lambda$ est un processus de Markov homogène qui prend ses valeurs dans l'ensemble $\mathcal{P}(\Zd)$ des parties de $\Zd$, que l'on  identifiera parfois à l'ensemble $\{0,1\}^{\Zd}$. Ainsi, on s'autorisera les deux écritures
$$z \in \xi_t \text{ ou } \xi_t(z)=1.$$
Si $\xi_t(z)=1$, on dit que le site $z$ est occupé, tandis que si $\xi_t(z)=0$, on dit que le site $z$ est vide.
Le processus évolue de la façon suivante:
\begin{itemize}
\item un site occupé devient vide à taux $1$,
\item un site $z$ vide devient infecté au taux
$\displaystyle \sum_{\|z-z'\|_1=1} \xi_t(z')\lambda_{\{z,z'\}},$
\end{itemize}
ces différentes évolutions étant indépendantes les unes des autres. 
Dans la suite, on notera $\D$ l'ensemble des fonctions càdlàg de $\R_{+}$ dans $\mathcal{P}(\Zd)$: c'est l'espace canonique pour les processus de Markov admettant $\mathcal{P}(\Zd)$ comme espace d'état.

Pour définir le processus de contact en environnement $\lambda\in\Lambda$, on utilise la construction de Harris~\cite{MR0488377} des processus de Markov additifs à valeurs dans les parties de $\Zd$. Elle permet de coupler des processus de contact partant de configurations différentes, en les construisant à partir d'une même collection de mesures de Poisson sur $\R_+$.

\subsection{Construction de la famille de mesures de Poisson}
Sur $\R_+$ muni de sa tribu borélienne $\mathcal B(\R_+)$, on considère l'ensemble $M$ constitué des mesures ponctuelles $m=\sum_{i=0}^{+\infty} \delta_{t_i}$ localement finies dont tous les atomes sont de masse $1$. On munit cet ensemble de la tribu $\mathcal M$ engendrée par les applications $m\mapsto m(B)$, où $B$ décrit l'ensemble des boréliens de $\R_+$.

On définit alors l'espace mesurable $(\Omega, \mathcal F)$ par
$$\Omega=M^{\Ed}\times M^{\Zd} \text{ et } \mathcal F=\mathcal{M}^{\otimes \Ed} \otimes \mathcal{M}^{\otimes \Zd}.$$
Sur cet espace, on considère la famille de probabilités $(\P_{\lambda})_{\lambda\in\Lambda}$  définies comme suit: pour tout $\lambda=(\lambda_e)_{e \in \Ed} \in \Lambda$, 
$$\P_{\lambda}=\left(\bigotimes_{e \in \Ed} \mathcal{P}_{\lambda_{e}}\right) \otimes \mathcal{P}_1^{\otimes\Zd},$$
où, pour chaque $\lambda\in\R_+$, $\mathcal{P}_{\lambda}$ est la loi d'un processus ponctuel de Poisson sur $\R_+$ d'intensité $\lambda$. Si $\lambda \in \R_+$, on écrit plutôt $\P_\lambda$ (au lieu de $\P_{(\lambda)_{e \in \Ed}}$) pour la loi en environnement déterministe avec taux de naissance $\lambda$ en chaque arête.

Pour tout $t\ge 0$, on note $\mathcal{F}_t$ la tribu engendrée par les applications $\omega\mapsto\omega_e(B)$ et $\omega\mapsto\omega_z(B)$, où $e$ décrit $\Ed$, $z$ décrit $\Zd$, et $B$ décrit l'ensemble des boréliens de $[0,t]$.
 
\subsection{La construction graphique du processus de contact} Cette construction est très détaillée dans l'article de Harris~\cite{MR0488377}; nous ne donnons  ici qu'une description informelle. Soit $\omega=((\omega_e)_{e \in \Ed}, (\omega_z)_{z \in \Zd}) \in \Omega$. Au dessus de chaque site $z \in \Zd$, on trace un axe temporel $\R_+$, et on marque une croix aux instants donnés par $\omega_z$. Au dessus de chaque arête $e \in \Ed$, on trace aux instants donnés par $\omega_e$ un segment horizontal entre les deux extrémités de l'arête. Un chemin ouvert suit les axes temporels au dessus des sites sans pouvoir traverser les croix, et emprunte les segments horizontaux pour passer d'un axe dessiné au dessus d'un site à l'axe dessiné au dessus d'un site voisin. Si on pense le processus de contact en termes de propagation d'une infection, un chemin ouvert est un trajet  possible de l'infection d'un site par un autre.
Pour $x,y \in \Zd$ et $t \ge 0$, on dit alors que $\xi_t^x(y)=1$ si et seulement si il existe un chemin ouvert de $(x,0)$  à $(y,t)$, puis on définit:
\begin{eqnarray}
\xi_t^x & = & \{y \in \Zd: \; \xi_t^x(y)=1\}, \nonumber \\
\text{et, pour tout $A \in \mathcal P(\Zd)$,}\quad  \xi_t^A & = & \bigcup_{x \in A} \xi_t^x. \label{additivite}
\end{eqnarray}
En particulier, on a immédiatement 
$(A \subset B)  \Rightarrow (\forall t \ge 0\quad \xi_t^A \subset \xi_t^B)$.

Quand $\lambda\in\R_{+}^*$, Harris prouve que sous $\P_{\lambda}$, le processus $(\xi^A_t)_{t \ge 0}$ est le processus de contact avec taux de naissance constant $\lambda$, partant de la configuration initiale $A$. Il n'est pas difficile d'adapter la preuve pour voir que, si $\lambda \in \Lambda$, sous $\P_\lambda$, le processus $(\xi^A_t)_{t \ge 0}$ est ce que nous avons appelé le processus de contact en environnement $\lambda$, partant de la configuration initiale $A$. 
Ce processus est fellérien, et jouit donc de la propriété de Markov forte. 

\subsection{Translations temporelles}
Pour $t \ge 0$, on définit l'opérateur de translation $\theta_t$ sur une mesure ponctuelle $m=\sum_{i=1}^{+\infty} \delta_{t_i}$ sur $\R_+$ par
$$\theta_t m=\sum_{i=1}^{+\infty} \1_{\{t_i\ge t\}}\delta_{t_i-t}.$$
La translation $\theta_t$ induit de manière naturelle une opération sur $\Omega$, que l'on note encore $\theta_t$: pour tout $\omega \in \Omega$, on pose
$$ \theta_t \omega=((\theta_t \omega_e)_{e \in \Ed}, (\theta_t \omega_z)_{z \in \Zd}).$$
Les mesures ponctuelles de Poisson  étant toutes invariantes par translation, l'opérateur~$\theta_t$ laisse toutes les probabilités $\P_{\lambda}$ invariantes. La propriété de semi-groupe du processus de contact a ici une version plus forte trajectorielle: pour tout $A \subset \Zd$, pour tous $s,t \ge 0$, pour tout $\omega \in \Omega$, on a l'identité
\begin{equation}
\label{semigroupe}
\xi_{t+s}^A(\omega) =  \xi_{s}^{\xi_t^A(\omega)}(\theta_t\omega)=\xi_{s}^{\centerdot}(\theta_t\omega)\circ \xi_t^A(\omega),
\end{equation}
qui peut s'exprimer sous la forme markovienne classique
$$\forall B\in\mathcal{B}(\mathcal{D})\quad\P((\xi_{t+s}^A)_{s \ge 0} \in B| \mathcal F_t)  =  \P((\xi_{s}^{\centerdot})_{s \ge 0} \in B) \circ \xi^A_t.$$

On a l'analogue pour la propriété de Markov forte: si $T$ est un temps d'arrêt adapté à la filtration $(\mathcal{F}_t)_{t\ge 0}$, alors, sur l'événement $\{T<+\infty\}$,
\begin{eqnarray*}
 \xi_{T+s}^A(\omega)& = & \xi_{s}^{\xi_T^A(\omega)}(\theta_T\omega),\\
\nonumber\forall B\in\mathcal{B}(\mathcal{D})\quad \P((\xi_{T+s}^A)_{s \ge 0} \in B| \mathcal F_T) & = & \P((\xi_{s}^{\centerdot})_{s \ge 0} \in B) \circ \xi^A_T.
\end{eqnarray*}
Rappelons que $\mathcal{F}_T$ désigne la tribu des événements déterminés au temps $T$ définie par 
$$\mathcal{F}_T=\{B\in\mathcal{F}:\quad \forall t\ge 0\quad B\cap \{T\le t\}\in\mathcal{F}_t\}.$$

\subsection{Translations spatiales}
On peut faire  agir $\Zd$ à la fois sur le processus et 
sur l'environnement.
L'action sur le processus consiste à changer le point de vue de l'observateur de l'évolution du processus : pour $x \in \Zd$, on définit l'opérateur de translation~$T_x$ par
$$\forall \omega \in \Omega\quad T_x \omega=(( \omega_{x+e})_{e \in \Ed}, ( \omega_{x+z})_{z \in \Zd}),$$
où l'on a convenu que $x+e$ était la translatée de vecteur $x$ de l'arête $e$.

Par ailleurs, pour tout environnement $\lambda\in\Lambda$, on considérera
l'environnement translaté $\T{x}{\lambda}$ défini par $(\T{x}{\lambda})_e=\lambda_{x+e}$.
Ces deux actions sont duales au sens suivant: pour tout $\lambda \in \Lambda$, pour tout $x \in \Zd$, on a
\begin{eqnarray}
\label{translationspatiale}
\forall A\in\mathcal{F}\quad\P_{\lambda}(T_x \omega \in A) & = & \P_{\T{x}{\lambda}}(\omega \in A); 
\end{eqnarray}
en particulier, la loi de $\xi^x$ sous $\P_\lambda$ est \'egale \`a la loi de $\xi^0$ sous $\P_{x.\lambda}$.

\subsection{Temps d'atteinte essentiels et transformations associées}
Pour $A \subset \Zd$, on définit le temps de vie $\tau^A$ du processus issu de $A$, 
$$\tau^A=\inf\{t\ge0: \; \xi_t^A=\varnothing\}. $$
Pour $A \subset \Zd$ et $x \in \Zd$,  on définit également l'instant $t^A(x)$ de première infection du point $x$ en partant de $A$: 
$$t^A(x)=\inf\{t\ge 0: \; x \in \xi_t^A\}.$$
Si $y\in\Zd$, on note  $t^y(x)$ pour  $t^{\{y\}}(x)$. De même, on écrira simplement $t(x)$ pour $t^0(x)$.

On introduit alors la quantité $\sigma(x)$ qui s'avèrera primordiale dans 
la suite: il s'agit d'un  instant où naît au site $x$, dans le processus issu de $0$,  un point
dont la descendance ne s'éteint pas. On le définit par une famille de temps d'arrêts comme suit: on pose $u_0(x)=v_0(x)=0$ et on définit  par récurrence deux  suites croissantes de temps d'arrêt $(u_n(x))_{n \ge 0}$ et $(v_n(x))_{n \ge 0}$ avec
$u_0(x)=v_0(x)\le u_1(x)\le v_1(x)\le u_2(x)\dots$  de la façon suivante:
\begin{itemize}
\item Supposons avoir construit $v_k(x)$. On pose
$
u_{k+1}(x)  =\inf\{t\ge v_k(x): \; x \in \xi^0_t \}.
$ \\
Si $v_k(x)<+\infty$, alors $u_{k+1}(x)$ représente le premier instant après $v_k(x)$ où le point $x$ est à nouveau occupé; sinon $u_{k+1}(x)=+\infty$.
\item Supposons avoir construit $u_k(x)$, avec $k \ge 1$. On pose
$v_k(x)=u_k(x)+\tau^x\circ \theta_{u_k(x)}$.\\
Si $u_k(x)<+\infty$, le temps $\tau^x\circ \theta_{u_k(x)}$ représente la durée de vie du processus de contact démarrant en $x$ à l'instant $u_k(x)$; sinon $v_k(x)=+\infty$.
\end{itemize}
On pose alors
\begin{equation}
\label{definitiondeK}
K(x)=\min\{n\ge 0: \; v_{n}(x)=+\infty \text{ ou } u_{n+1}(x)=+\infty\}.
\end{equation}
Cette quantité représente le nombre d'étapes avant que l'on arrête le procédé: on s'arrête soit parce qu'on trouve un $v_n(x)$ infini, ce qui correspond à trouver un instant $u_n(x)$ où le point $x$ est à la fois occupé et départ d'une descendance infinie, soit parce qu'on trouve un $u_{n+1}(x)$ infini, ce qui correspond au fait qu'après $v_n(x)$, le point $x$ n'est plus jamais occupé.

On pose alors $\sigma(x)=u_{K(x)}$.

Nous l'appellerons le \emph{temps d'atteinte essentiel} de $x$. Il est bien sûr plus grand que le temps d'atteinte $t(x)$. 
On verra que la quantit\'e $K(x)$ est presque sûrement finie, ce qui fait que $\sigma(x)$ est bien défini.
Conjointement, on définit la transformation $\tilde \theta_x$ de $\Omega$ dans lui-même par:
\begin{equation*}
\tilde \theta_x = 
\begin{cases} T_{x} \circ \theta_{\sigma(x)} & \text{si $\sigma(x)<+\infty$,}
\\
T_x &\text{sinon,}
\end{cases}
\end{equation*}
ou, si l'on se veut plus explicite:
\begin{equation*}
(\tilde \theta_x)(\omega) = 
\begin{cases} T_{x} (\theta_{\sigma(x)(\omega)} \omega) & \text{si $\sigma(x)(\omega)<+\infty$,}
\\
T_x (\omega) &\text{sinon.}
\end{cases}
\end{equation*}

 Nous allons travailler principalement avec le temps d'atteinte essentiel $\sigma(x)$ qui possède, contrairement à $t(x)$, de bonnes propriétés d'invariance en environnement conditionné à survivre.
Nous verrons qu'on peut aussi contrôler la différence entre $\sigma(x)$ et $t(x)$, ce qui permettra de transposer les résultats obtenus pour $\sigma(x)$ à $t(x)$.

\subsection{Processus de contact en environnement aléatoire conditionné à survivre}
Nous allons maintenant nous placer en environnement aléatoire. Pour toute la suite, on fixe une mesure de probabilité $\nu$ sur l'ensemble des environnements $\Lambda=[\lambda_{\min},\lambda_{\max}]^{\Ed}$. On suppose que $\nu$ est stationnaire et ergodique sous
l'action de $\Zd$. Bien évidemment, cela contient le cas d'un environnement déterministe classique avec un taux de naissance constant $\lambda>\lambda_c(\Zd)$: il suffit de prendre pour $\nu$ la masse de Dirac~$(\delta_{\lambda})^{\otimes \Ed}$.

Pour $\lambda \in \Lambda$, on définit la probabilité ${\Pbarre}_\lambda$ 
sur $(\Omega, \mathcal F)$ par
$$\forall E\in\mathcal{F}\quad {\Pbarre}_\lambda(E)=\P_\lambda(E|\tau^0=+\infty).$$
C'est la loi de la famille des  processus ponctuels de Poisson, conditionnés à ce que le processus de contact issu de $0$ survive.
Sur le même espace $(\Omega, \mathcal F)$, on définit alors la probabilité moyennée (annealed) $\Pbarre$ par
$$\forall E\in\mathcal{F}\quad {\Pbarre}(E)=\int_\Lambda {\Pbarre}_\lambda(E)\ d\nu(\lambda).$$
Autrement dit, l'environnement $\lambda=(\lambda_e)_{e \in \Ed}$ dans lequel le processus de contact évolue est une variable aléatoire de loi $\nu$, et 
c'est sous cette dernière probabilité $\Pbarre$ que l'on va chercher à établir le théorème de forme asymptotique.

Il aurait pu sembler plus naturel de travailler avec la probabilit\'e suivante:
$$\forall E\in\mathcal{F}\quad \hat{\P}(E)=\P(E|\tau^0=+\infty)=
\frac{\int \Pbarre_{\lambda}(E)\P_\lambda(\tau^0=+\infty) d\nu(\lambda)}{\int \P_\lambda(\tau^0=+\infty) d\nu(\lambda)}.$$
Notre choix de ne consid\'erer que des environnements uniform\'ement surcritiques assure cependant que $\Pbarre$ et $\hat{\P}$ sont \'equivalentes. Le th\'eor\`eme de forme asymptotique que nous \'enon\c{c}ons presque s\^urement sous $\Pbarre$ peut ainsi \^etre \'enonc\'e presque s\^urement sous $\hat{\P}$.

\subsection{Organisation de l'article et résultats}
Dans la section~\ref{sigma}, on 
établit les propriétés d'invariance et d'ergodicité. On montre en particulier le théorème suivant:
\begin{theorem}
\label{systemeergodique}
Pour tout $x\in\Zd\backslash\{0\}$, le système $(\Omega,\mathcal{F},\Pbarre,\tilde{\theta}_x)$ est ergodique.
\end{theorem}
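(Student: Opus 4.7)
The plan is to prove, in sequence, (i) that $\tilde\theta_x$ preserves $\Pbarre$, and (ii) that every $\tilde\theta_x$-invariant event has $\Pbarre$-measure zero or one.

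For (i), I would first check that $\sigma(x)<+\infty$ $\Pbarre$-almost surely. Under $\Pbarre_\lambda = \P_\lambda(\cdot\mid\tau^0=+\infty)$, the uniform supercriticality $\lambda_e \ge \lambda_{\min} > \lambda_c(\Zd)$ implies via stochastic comparison that the site $x$ is visited infinitely often by the process from $0$, and that each restart from $x$ survives with probability at least $p := \P_{\lambda_{\min}}(\tau^0=+\infty)>0$; the strong Markov property then shows that $K(x)$ is stochastically dominated by a geometric variable of parameter $p$, hence a.s.\ finite. Applying the strong Markov property at the stopping time $\sigma(x) = u_{K(x)}(x)$ together with the spatial translation identity~\eqref{translationspatiale} yields that, under $\P_\lambda$, the transformed configuration $\tilde\theta_x\omega = T_x\circ\theta_{\sigma(x)}\omega$ has law $\P_{x.\lambda}$; by construction of $K(x)$, the event $\{\tau^0 = +\infty\}$ is automatically satisfied under this pushforward. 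Hence $(\tilde\theta_x)_*\Pbarre_\lambda = \Pbarre_{x.\lambda}$, and integrating against the $\Zd$-stationary law $\nu$ gives $(\tilde\theta_x)_*\Pbarre = \Pbarre$.

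For (ii), I would lift the problem to the extended space $\tilde\Omega = \Lambda\times\Omega$ equipped with the probability $\tilde\Pbarre(d\lambda,d\omega) = \Pbarre_\lambda(d\omega)\,d\nu(\lambda)$ and the skew product $\hat T_x(\lambda,\omega) = (x.\lambda,\tilde\theta_x\omega)$; $\hat T_x$ preserves $\tilde\Pbarre$ by step (i), and any $\tilde\theta_x$-invariant event $A\in\mathcal F$ lifts to the $\hat T_x$-invariant set $\Lambda\times A$ with $\tilde\Pbarre(\Lambda\times A) = \Pbarre(A)$, so it suffices to prove that $\hat T_x$ is $\tilde\Pbarre$-ergodic. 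Iteration gives $\hat T_x^n(\lambda,\omega) = (nx.\lambda,\,T_{nx}\circ\theta_{\sigma^{(n)}(x)(\omega)}\omega)$ with $\sigma^{(n)}(x) = \sum_{k=0}^{n-1}\sigma(x)\circ\tilde\theta_x^k$ tending to $+\infty$ $\Pbarre_\lambda$-a.s., so the iterates escape to infinity simultaneously in space and time. For a bounded $\hat T_x$-invariant function $f$ and any cylinder event $B$ depending on finite space-time data, the invariance $f = f\circ\hat T_x^n$ combined with the independence of the underlying Poisson processes on disjoint space-time regions (under each $\P_\lambda$) forces $f(\lambda,\cdot)$ to be $\Pbarre_\lambda$-a.s.\ constant in $\omega$, so $f$ reduces to a function $g(\lambda)$ of the environment alone.

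The main obstacle is to deduce from the single-translation identity $g(x.\lambda) = g(\lambda)$ $\nu$-a.s., obtained from the $\hat T_x$-invariance of $f$, that $g$ is actually $\nu$-a.s.\ constant. Since $\nu$ is assumed only ergodic under the full $\Zd$-action, the relation $g\circ x. = g$ is in general strictly weaker. I expect to close this gap by running the decorrelation argument along an arbitrary direction $y\in\Zd$: approximating $A$ by cylinders depending on a bounded window, translating these cylinders by $T_y$, and using again the Poisson independence together with the stationarity of $\nu$, one can show $g(y.\lambda) = g(\lambda)$ $\nu$-a.s.\ for every $y\in\Zd$; the $\Zd$-ergodicity of $\nu$ then forces $g$ to be $\nu$-a.s.\ constant, yielding $\Pbarre(A) = \int g\,d\nu \in \{0,1\}$.
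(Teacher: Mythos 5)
Your step (i) matches the paper's Corollary~\ref{invariancePbarre}, and the skew-product lift to $\tilde\Omega=\Lambda\times\Omega$ with $\Qbarre$ also coincides with the paper's setup. The remainder of your argument, however, diverges from the paper's route and contains two gaps.

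First, the claim that $\hat T_x$-invariance combined with ``Poisson independence on disjoint space-time regions'' forces $f(\lambda,\cdot)$ to be $\Pbarre_\lambda$-a.s.\ constant in $\omega$ is a substantial hand-wave. After conditioning on $\{\tau^0=+\infty\}$ the raw Poisson product structure is destroyed, and $\sigma^{(n)}(x)$ is not a stopping time, so a naive appeal to independence on remote space-time boxes fails. The paper's proof does not argue this way; it proves a quantitative decorrelation estimate (Lemma~\ref{dominationsm}),
$\bigl|\Pbarre_\lambda(A\cap\tilde\theta_x^{-\ell}(B))-\Pbarre_\lambda(A)\,\Pbarre_{\T{\ell x}{\lambda}}(B)\bigr|\le A(t,q)\,q^{-\ell}$
for $A\in\mathcal F_t$, obtained from the restart identity of Lemma~\ref{supermagic} together with a uniform bound on $\Ebarre_\lambda[e^{-\alpha\sigma(x)}]$. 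That exponential mixing bound is the main technical content behind the theorem and is precisely what your argument glosses over.

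Second, your proposed fix of the ``main obstacle'' cannot work as described. You correctly observe that the $\hat T_x$-invariance yields only $g(x.\lambda)=g(\lambda)$, which is weaker than constancy under mere $\Zd$-ergodicity of $\nu$. But the only dynamics that an invariant function sees is $\hat T_x$; there is no $\tilde\theta_y$-invariance to exploit for $y$ not a multiple of $x$, and the plain spatial translation $T_y$ does not preserve $\Pbarre_\lambda$ (nor $\Pbarre$), so there is no decorrelation in direction $y$ to run. The paper handles this step differently: it establishes convergence in $L^2(\Qbarre)$ of the Ces\`aro averages $\frac1n\sum_{k<n}\1_A\circ\tilde\theta_x^k$ to $\Pbarre(A)$, splitting them into a centred term (killed using Lemma~\ref{dominationsm}) and the environment-only term $\frac1n\sum_{k<n}\Pbarre_{k x.\lambda}(A)$, and applies Von Neumann's $L^2$ ergodic theorem for the single shift $\lambda\mapsto x.\lambda$ on $(\Lambda,\nu)$ to the latter. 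That is an ergodic-averaging argument on the environment coordinate, not a $y$-directional decorrelation, and it is where the ergodicity hypothesis on $\nu$ actually enters.
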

Dans la section~\ref{controlediff}, on étudie les propriétés d'intégrabilité des $(\sigma(x))_{x\in\Zd}$; on contrôle l'écart entre $\sigma(x)$ et $t(x)$ ainsi que le défaut de sous-additivité de $\sigma$:
\begin{theorem}
\label{presquesousadditif}
Il existe des constantes positives $A_{\ref{epresquesousadditif}},B_{\ref{epresquesousadditif}}$ telles que 
pour tout $\lambda\in\Lambda$, pour tous $x,y\in\Zd$, 
\begin{equation}
\forall t>0 \quad \Pbarre_\lambda(\sigma(x+y)-(\sigma(x)+\sigma(y)\circ\tilde{\theta}_x)\ge t)\le A_{\theequation}\exp(-B_{\theequation}\sqrt{t}).
\label{epresquesousadditif}
\end{equation}
\end{theorem}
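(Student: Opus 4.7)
The plan is to exploit that $\tau := \sigma(x) + \sigma(y)\circ\tilde\theta_x$ is, by construction, a \emph{good} occupation time of $x+y$, in the sense that $x+y\in\xi^0_\tau$ and the fresh contact process issued from $(x+y,\tau)$ survives forever. Indeed, at time $\sigma(x)$ the occupant at $x$ has infinite Harris-descendance in $\xi^0$, so by the additivity~(\ref{additivite}) the fresh process $\xi^{\{x\}}\circ\theta_{\sigma(x)}$ is an infinite sub-process of $\xi^0$; the translation duality~(\ref{translationspatiale}) then lets one interpret $\sigma(y)\circ\tilde\theta_x$ as the essential hitting time of $y$ inside that sub-process, producing an individual with infinite Harris-descendance at the space-time point $(x+y,\tau)$.

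Next I would compare $\tau$ with the sequence of stopping times $u_k(x+y),v_k(x+y)$ defining $\sigma(x+y)$. Pick $k^\star$ such that $u_{k^\star}\le\tau<u_{k^\star+1}$; necessarily $\tau\le v_{k^\star}$ since $u_{k^\star+1}$ is the first entry into $x+y$ after $v_{k^\star}$. If $v_{k^\star}=+\infty$ the algorithm already stops at step $k^\star$, yielding $\sigma(x+y)=u_{k^\star}\le\tau$ and a non-positive defect. Otherwise, the fresh process issued from $(x+y,u_{k^\star})$ dies by $v_{k^\star}$, so the occupant at $(x+y,\tau)$ cannot be one of its Harris-descendants (its descendance would then be finite, contradicting the first step), and the algorithm must continue through additional rounds. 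On this bad event one has
\[
\sigma(x+y)-\tau \;\le\; \sum_{j=k^\star}^{K(x+y)-1}(v_j-u_j) \;+\; \sum_{j=k^\star+1}^{K(x+y)}(u_j-v_{j-1}).
\]

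To bound this sum I would apply the strong Markov property at each stopping time $u_j$. Conditionally on $\mathcal{F}_{u_j}$ and on $\{v_j<+\infty\}$, the lifetime $v_j-u_j$ is distributed as $\tau^{x+y}$ under $\P_\lambda$ conditioned on $\tau^{x+y}<+\infty$, which has exponentially decaying tail uniformly in $\lambda\in\Lambda$: this is the classical estimate for a supercritical contact process conditioned on extinction, usable uniformly thanks to $\lambda_e\in[\lambda_{\min},\lambda_{\max}]$ with $\lambda_{\min}>\lambda_c(\Zd)$. The number of rounds $K(x+y)-k^\star$ is stochastically dominated by a geometric variable with success parameter at least $\P_{\lambda_{\min}}(\tau^0=+\infty)>0$, since each attempt succeeds independently with a uniformly positive probability. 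The waiting gaps $u_{j+1}-v_j$ can be controlled in the same spirit, exploiting that $\xi^0$ still contains the infinite cluster issued from $(x+y,\tau)$ and therefore revisits $x+y$ at positive density. Putting the pieces together via a union bound of the form $\P(\sum_{j=1}^{N}X_j\ge t)\le \P(N\ge n)+n\,\P(X_1\ge t/n)$, optimized with $n$ of order $\sqrt{t}$, delivers the stretched-exponential bound $A\exp(-B\sqrt{t})$.

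The main obstacle is that $\tau$ is not a stopping time of the Harris filtration, and the index $k^\star$ itself depends on $\tau$; one cannot simply restart the Markov property at $\tau$. The argument must instead operate at the surrounding stopping times $u_{k^\star}$ and $v_{k^\star}$, deal with the single bad block containing $\tau$ separately from the subsequent conditionally independent blocks, and keep every estimate uniform in the environment $\lambda\in\Lambda$ --- with the additional subtlety that the whole analysis is carried out under the conditioned law $\Pbarre_\lambda$, which forces one to track how the survival of $\xi^0$ biases the law of each failed sub-process lifetime.
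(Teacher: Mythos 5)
You have correctly identified the main structural ideas the paper uses: that $\tau:=\sigma(x)+\sigma(y)\circ\tilde\theta_x$ is an instant at which $x+y$ is occupied in $\xi^0$ with infinite Harris-descendance, and that the defect $\sigma(x+y)-\tau$ must then be bounded by summing the block increments $v_j-u_j$ and $u_{j+1}-v_j$ of the construction of $\sigma(x+y)$, with a union bound at scale $n\sim\sqrt t$ giving the stretched-exponential rate. The control of the lifetimes $v_j-u_j$ via the strong Markov property (Lemma~\ref{LEMtpsdevie}) and the subgeometric tail of $K(x+y)$ (Lemma~\ref{Kgeom}) are exactly as in the paper.

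However, your treatment of the reinfection gaps $u_{j+1}-v_j$ is a genuine gap, and it is precisely the crux of the theorem. You write that these gaps ``can be controlled in the same spirit, exploiting that $\xi^0$ still contains the infinite cluster issued from $(x+y,\tau)$ and therefore revisits $x+y$ at positive density.'' This is not an argument: the time until $x+y$ is reoccupied after $v_j$ depends on the \emph{whole} configuration at time $v_j$, not merely on the fact that some surviving cluster exists somewhere, and at a later round $j$ the surviving cluster may be far from $x+y$. The paper explicitly singles this out as the hard step (``le temps $u_{i+1}(x)-v_i(x)$ \dots\ d\'epend \'evidemment de la configuration au temps $v_i(x)$, que l'on peine \`a contr\^oler pr\'ecis\'ement et surtout uniform\'ement en $x$'') and develops a dedicated mechanism to bypass direct control: one works under the event that a space-time box around $(x+y,\tau)$ contains no point of \emph{bad growth} (event $N_L(x+y,\cdot)\circ\theta_\tau=0$ of Lemma~\ref{NL}), and Lemmas~\ref{futfut} and~\ref{futfut2} then chain the restart steps deterministically so that $u_{j+1}-u_j\le\gamma t$ for all relevant $j$. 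Without this (or an equivalent quantitative device), your block-by-block restart does not close.

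A secondary remark: your case analysis at the index $k^\star$ (``necessarily $\tau\le v_{k^\star}$'' and ``if $v_{k^\star}=+\infty$ then $\sigma(x+y)=u_{k^\star}$'') is sound, but the observation that the occupant at $(x+y,\tau)$ is not a Harris-descendant of $(x+y,u_{k^\star})$ does not by itself allow you to restart the Markov property: $\tau$ is not a stopping time and neither is $k^\star$, as you yourself note. The paper never conditions at $\tau$; it restarts at the honest stopping times of the construction and controls the whole chain through the deterministic Lemma~\ref{futfut2} under the good-box event, circumventing the need to locate $k^\star$ and handle its block separately.
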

Ainsi, le d\'efaut de sous-additivit\'e de $\sigma$ est très faible; en particulier il ne dépend pas des points consid\'er\'es.
Alors, s'inspirant des méthodes de Kingman~\cite{MR0438477} et Liggett~\cite{MR806224}, on montre dans la section~\ref{forme} que pour tout $x$ dans $\Zd$, le rapport $\frac{\sigma(nx)}n$ converge $\Pbarre$ presque sûrement vers un réel $\mu(x)$. 
La fonctionnelle $x\mapsto \mu(x)$ se prolonge en une norme sur $\Rd$, qui va caractériser la forme asymptotique. Dans la suite, on notera $A_{\mu}$ la boule unité pour $\mu$.
On définit 
\begin{eqnarray*}
H_t & = & \{x\in\Zd: \; t(x)\le t\},\\
G_t & = & \{x\in\Zd: \; \sigma(x)\le t\},\\
K'_t & = & \{x\in\Zd: \;\forall s\ge t \quad \xi^0_s(x)=\xi^{\Zd}_s(x)\},
\end{eqnarray*}
et on désigne par $\tilde{H}_t,\tilde{G}_t,\tilde{K}'_t$ les versions grossies 
des ensembles $H_t,G_t,K'_t$: 
$$\tilde{H}_t=H_t+[0,1]^d, \; \tilde{G}_t=G_t+[0,1]^d \text{ et } \tilde{K}'_t=K'_t+[0,1]^d.$$
On peut alors démontrer les résultats suivants:

\begin{theorem}[Théorème de forme asymptotique]
\label{thFA}
Pour tout $\epsilon>0$, avec probabilité~1 sous $\Pbarre$, pour tout $t$ suffisamment grand,
\begin{equation}
\label{leqdeforme}
(1-\epsilon)A_{\mu}\subset \frac{\tilde K'_t\cap \tilde G_t}t\subset \frac{\tilde G_t}t\subset\frac{\tilde H_t}t\subset (1+\epsilon)A_{\mu}.
\end{equation}
\end{theorem}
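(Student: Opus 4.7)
Le plan est de d\'eduire le th\'eor\`eme de forme de la convergence ponctuelle $\sigma(nx)/n\to\mu(x)$, obtenue dans la section~\ref{forme} \`a l'aide du th\'eor\`eme ergodique presque sous-additif appliqu\'e gr\^ace aux Th\'eor\`emes~\ref{systemeergodique} et~\ref{presquesousadditif}, combin\'ee au contr\^ole de l'\'ecart $\sigma(x)-t(x)$ \'etabli dans la section~\ref{controlediff}. La strat\'egie g\'en\'erale est classique pour les th\'eor\`emes de forme: convertir la convergence presque s\^ure dans chaque direction rationnelle en convergence uniforme sur la sph\`ere unit\'e de $\mu$, puis traduire ces asymptotiques en l'\'enonc\'e ensembliste \`a l'aide d'une borne de croissance lin\'eaire.

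Je commencerais par \'etablir les propri\'et\'es de $\mu$. L'homog\'en\'eit\'e $\mu(qx)=q\mu(x)$ pour $q\in\Q_+$ r\'esulte de la d\'efinition. La sous-additivit\'e $\mu(x+y)\le\mu(x)+\mu(y)$ s'obtient en int\'egrant l'in\'egalit\'e du Th\'eor\`eme~\ref{presquesousadditif}: l'esp\'erance de $\sigma(nx+ny)$ sous $\Pbarre$ est born\'ee par celle de $\sigma(nx)+\sigma(ny)\circ\tilde{\theta}_{nx}$ plus une constante ind\'ependante de $n$, ce qui donne l'in\'egalit\'e apr\`es division par $n$ et passage \`a la limite, le Th\'eor\`eme~\ref{systemeergodique} assurant que $\sigma(ny)\circ\tilde{\theta}_{nx}$ a m\^eme loi que $\sigma(ny)$ sous $\Pbarre$. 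La stricte positivit\'e $\mu(x)>0$ pour $x\ne 0$ d\'ecoule de la vitesse de propagation finie de l'infection (construction de Harris avec $\lambda_e\le\lambda_{\max}$), et les sym\'etries de $\mu$ proviennent de celles de la loi $\nu$. Ceci fournit une norme sur $\Q^d$, Lipschitz pour $\|\cdot\|_\infty$ par sous-additivit\'e, qui s'\'etend en une norme sur $\Rd$. La m\^eme sous-additivit\'e, combin\'ee \`a un recouvrement de la sph\`ere par un $\epsilon$-r\'eseau fini de directions rationnelles, renforce la convergence directionnelle en une convergence uniforme: pour tout $\epsilon>0$, il existe presque s\^urement $T$ tel que $|\sigma(x)-\mu(x)|\le\epsilon\|x\|_\infty$ pour tout $x\in\Zd$ v\'erifiant $\|x\|_\infty\ge T$.

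Les deux inclusions centrales de~\eqref{leqdeforme} sont imm\'ediates: $\tilde{K}'_t\cap\tilde{G}_t\subset\tilde{G}_t$ par d\'efinition et $\tilde{G}_t\subset\tilde{H}_t$ via $t(x)\le\sigma(x)$. Pour l'inclusion de droite $\tilde{H}_t\subset(1+\epsilon)tA_{\mu}$, j'utiliserais une borne de croissance lin\'eaire $\tilde{H}_t\subset Mt[-1,1]^d$ valable pour $t$ grand $\Pbarre$-presque s\^urement, cons\'equence directe de la construction graphique avec taux major\'es par $\lambda_{\max}$ et d'un Borel--Cantelli sur les temps entiers. Combin\'ee \`a la convergence uniforme ci-dessus et au contr\^ole $\sigma(x)-t(x)$ de la section~\ref{controlediff}, elle entra\^ine que tout $x\in H_t$ avec $\|x\|_\infty\le Mt$ et $\|x\|_\infty$ assez grand v\'erifie $t\ge t(x)\ge\sigma(x)-(\sigma(x)-t(x))\ge(1-\epsilon/2)\mu(x)$, donc $\mu(x)\le(1+\epsilon)t$.

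L'inclusion de gauche constitue l'obstacle principal. Pour la partie dans $\tilde{G}_t$, je choisirais un $\epsilon$-r\'eseau fini $(z_i)$ de la sph\`ere unit\'e de $\mu$ form\'e de points rationnels: la convergence presque s\^ure en chaque $z_i$ donne $\sigma(\lfloor t(1-\epsilon)z_i/\mu(z_i)\rfloor)\le t$ pour $t$ assez grand, puis la sous-additivit\'e de $\sigma$ (Th\'eor\`eme~\ref{presquesousadditif}, avec un contr\^ole d\'eterministe des termes d'erreur) permet de remplir la boule enti\`ere et d'obtenir $(1-\epsilon)tA_{\mu}\cap\Zd\subset G_t$. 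Pour la partie dans $\tilde{K}'_t$, le point d\'elicat est de montrer qu'une fois l'infection arriv\'ee en $x$ avec une lign\'ee infinie au temps $\sigma(x)$, les processus $\xi^0$ et $\xi^{\Zd}$ co\"incident en $x$ pour tout temps ult\'erieur suffisamment grand. J'envisage un argument de red\'emarrage utilisant la propri\'et\'e de Markov forte en $\sigma(x)$ et les estim\'ees de couplage du r\'egime surcritique, combin\'e \`a un Borel--Cantelli sur les $x$ d'une bo\^ite de taille $Mt$: l'objectif serait de majorer $\Pbarre(\sigma(x)\le(1-\epsilon)t,\ x\notin K'_t)$ par une quantit\'e d\'ecroissant exponentiellement en $t-\sigma(x)\ge\epsilon t/2$, puis de sommer sur la bo\^ite polynomiale fournie par la croissance lin\'eaire.
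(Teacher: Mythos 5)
Ta proposition suit essentiellement la m\^eme strat\'egie que le papier: extension de $\mu$ en une norme (lemme~\ref{munorme}), renforcement de la convergence directionnelle du th\'eor\`eme~\ref{cvdir} en convergence uniforme via un argument de r\'egularit\'e et d'$\epsilon$-r\'eseau (lemmes~\ref{reglo} et~\ref{convunifsigma}), d\'eduction des inclusions pour $\tilde G_t$ et $\tilde H_t$ gr\^ace au contr\^ole de $\sigma(x)-t(x)$ (corollaire~\ref{pareil}), et enfin argument de red\'emarrage en $\sigma(x)$ coupl\'e \`a l'estim\'ee~(\ref{petitsouscouple}) et \`a Borel--Cantelli pour la zone coupl\'ee $\tilde K'_t$ (lemme~\ref{zoneC}). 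Une seule imprécision \`a corriger: tu parles d'un \og contr\^ole d\'eterministe des termes d'erreur\fg{} de sous-additivit\'e, mais le d\'efaut $r(x,y)$ du th\'eor\`eme~\ref{presquesousadditif} est une variable al\'eatoire pour laquelle on ne dispose que d'une queue sous-exponentielle uniforme (et de moments via le corollaire~\ref{momentsecart}); c'est pr\'ecis\'ement le lemme de r\'egularit\'e~\ref{reglo}, obtenu par Borel--Cantelli sur les couronnes $\{\|x\|=m\}$, qui transforme cette information probabiliste en un contr\^ole presque s\^ur et uniforme permettant l'interpolation entre les directions du r\'eseau.
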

L'ensemble $K'_t\cap G_t$ est la zone couplée du processus. Notons que comme la littérature existante n'a pas fait jouer de rôle particulier à $\sigma(x)$, les théorèmes de forme asymptotique considèrent plutôt la quantité $K_t\cap H_t$,
avec $$K_t=\{x\in\Zd: \;  \xi^0_t(x)=\xi^{\Zd}_t(x)\}.$$ Notre résultat implique également le théorème de forme asymptotique pour $K_t\cap H_t$, car
$K'_t\cap G_t\subset K_t\cap H_t\subset H_t$.

Remarquons que le théorème de forme asymptotique peut se reformuler sous la forme "quenched" suivante: pour $\nu$ presque tout environnement, on sait 
que sur l'événement ``le processus de contact survit'', sa
croissance est presque sûrement gouvernée par~(\ref{leqdeforme}) pour tout $t$ suffisamment grand. 
Dans le même ordre d'idées, on peut retrouver pour $\nu$ presque tout environnement le résultat de convergence en loi suivant:
\begin{theorem}[Théorème de convergence complète]
\label{thCC}
Pour tout $\lambda\in\Lambda$, le processus de contact dans l'environnement $\Lambda$ admet une mesure invariante maximale $m_\lambda$ qui est caractérisée par
$$\forall A\subset \Zd, \Card{A}<+\infty\quad m_\lambda(\omega\supset A)=\lim_{t\to +\infty}\P_{\lambda}(\xi^{\Zd}_t\supset A).$$
Alors, pour toute partie  finie $A$ de $\Zd$ et pour $\nu$ presque tout $\lambda$, on a
$$\P^A_{\lambda,t} \Longrightarrow \P_{\lambda}(\tau^A<+\infty)\delta_{\varnothing}+\P_{\lambda}(\tau^A=\infty)m_{\lambda},$$
où $\P^A_{\lambda,t}$ est la loi de $\xi^A_t$ sous $\P_{\lambda}$ et $\Longrightarrow$ représente la convergence en loi.
\end{theorem}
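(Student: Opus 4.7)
The plan is to deduce the complete convergence theorem from the quenched shape theorem (Theorem~\ref{thFA}) via a graphical coupling argument on the event of survival of $\xi^A$.

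First, I would construct the upper invariant measure $m_\lambda$ for each fixed $\lambda \in \Lambda$. By the additive graphical construction, $\xi^{\Zd}_t$ dominates every $\xi^A_t$; the Markov property applied to the maximal initial condition $\xi^{\Zd}_0 = \Zd$ implies that $t \mapsto \P_\lambda(\xi^{\Zd}_t \supset A)$ is non-increasing for every finite $A \subset \Zd$. The pointwise limits, organized by inclusion--exclusion, define a probability measure $m_\lambda$ on $\mathcal{P}(\Zd)$, which is invariant for the semigroup, and maximal among invariant measures by stochastic domination.

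Next, fix a finite starting set $A \subset \Zd$ and a finite cylinder base $B \subset \Zd$; the weak convergence of $\xi^A_t$ reduces to the convergence of $\P_\lambda(\xi^A_t \cap B = C)$ for every $C \subset B$. Decompose according to $\tau^A$. On $\{\tau^A < +\infty\}$ the configuration $\xi^A_t$ is empty for $t$ large, producing the Dirac contribution $\P_\lambda(\tau^A < +\infty)\delta_{\varnothing}$. On $\{\tau^A = +\infty\}$, the additivity identity $\xi^A = \bigcup_{x \in A} \xi^x$ forces at least one $x \in A$ to satisfy $\tau^x = +\infty$. The quenched form of Theorem~\ref{thFA} applied to this $\xi^x$---obtained from the statement at the origin by stationarity of $\nu$ and a countable intersection of null sets over $\Zd$-translates---yields eventually $B \subset (1-\epsilon) t A_\mu$, so that $B$ lies in the coupled region of $\xi^x$; consequently $\xi^x_s \cap B = \xi^{\Zd}_s \cap B$ for all large $s$, and the monotone sandwich $\xi^x_s \subset \xi^A_s \subset \xi^{\Zd}_s$ transports this identity to $\xi^A_s \cap B$.

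What remains, and what I expect to be the main obstacle, is to prove the asymptotic decorrelation
$$\P_\lambda(\xi^{\Zd}_t \cap B = C,\, \tau^A = +\infty) \longrightarrow \P_\lambda(\tau^A = +\infty)\, m_\lambda(\omega \cap B = C).$$
In the classical translation-invariant framework this is typically obtained through the FKG inequality (the survival event being increasing in births and decreasing in deaths, and $\xi^{\Zd}_t$ being the maximal coupled process) or through graphical duality, which expresses $\xi^{\Zd}_t \cap B$ in terms of dual contact processes started from $B$ at time $t$ and run backward. The FKG approach remains available fibre-wise at fixed $\lambda$, once one reverses the order on the death components; combined with the coupling already established and a Markov restart at an intermediate time, it should produce both bounds and hence the desired equality. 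Once the fibre-wise convergence is secured, the quenched statement follows at once, since the argument only invoked a countable family of $\nu$-probability-one events.
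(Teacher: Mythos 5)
The paper does not supply a self-contained proof of Theorem~\ref{thCC}; it declares the result to follow by the standard argument and isolates, in the remark at the end of Section~\ref{restartestimees}, the one estimate that carries the genuinely new content:
$$\lim_{t \to+\infty} \tilde{\P}_{\lambda}\bigl( \xi^{0}_{t} \ne \varnothing, \; \hat{\xi}^x_{t} \ne \varnothing,\; \xi^{0}_{t}\cap \hat{\xi}^x_{t}=\varnothing\bigr)=0,$$
uniformly in $\|x\| \le c' t$, where $\hat{\xi}^x$ is a time-reversed (dual) contact process. This says that a forward process and a backward process that both survive must eventually meet; combined with self-duality it produces the asymptotic factorisation in the classical manner (Durrett's survey). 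So the paper's route is explicitly duality plus this intersection estimate.

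Your proposal takes a genuinely different route: it uses the shape theorem for the coupled region (Theorem~\ref{thFA}) to collapse $\xi^A_t \cap B$ to $\xi^{\Zd}_t \cap B$ on $\{\tau^A=+\infty\}$, which is a nice simplification, and is correct as far as it goes. But the step you yourself flag as the ``main obstacle'' --- showing
$$\P_\lambda(\xi^{\Zd}_t \cap B = C,\, \tau^A = +\infty) \longrightarrow \P_\lambda(\tau^A = +\infty)\, m_\lambda(\omega \cap B = C)$$
--- is not a routine cleanup; it \emph{is} the hard half of the complete convergence theorem, and the tools you invoke do not close it. FKG applied fibre-wise gives only one inequality: $\{\xi^{\Zd}_t\supset B\}$ and $\{\tau^A=+\infty\}$ are both increasing (in births, decreasing in deaths), hence positively correlated, and likewise $\{\tau^A<+\infty\}$ is decreasing so you get the complementary negative correlation; both statements reduce to the same one-sided bound, and you cannot recover equality from FKG alone (cylinder events $\{\xi^{\Zd}_t\cap B=C\}$ are not monotone at all). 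The Markov restart at an intermediate time $s$ founders because after restarting you must control $\P_\lambda(\xi^{D}_{t-s}\cap B=C)$ \emph{uniformly} over the random infinite set $D=\xi^{\Zd}_s$, which is exactly what forward arguments cannot deliver and exactly why the classical proof goes through the \emph{dual} process started from the fixed finite set $B$. In short, you have effectively assumed the asymptotic decoupling that needs proving. The missing ingredient is the intersection estimate above (a Bezuidenhout--Grimmett type statement, obtained in this paper inside the proof of inequality~(\ref{petitsouscouple}) via the restart coupling with the homogeneous $\lambda_{\min}$ process); once you have it, the translation-invariant recipe of Durrett transposes fibre-wise in $\lambda$ without change.
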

La preuve de ce théorème ne demandant pas d'idées nouvelles, on se contentera d'en donner l'ingrédient principal à la fin de la section~\ref{restartestimees}.

Pour démontrer le théorème de forme asymptotique, on aura besoin de quelques
contrôles exponentiels. Dans toute la suite, on note
$$B_r^x=\{y \in  \Zd: \; \|y-x\|_{\infty} \le r\},$$
et on note plus simplement $B_r$ au lieu de $B_r^0$.

\begin{prop}
\label{propuniforme}
Il existe des constantes strictement positives $A,B,M,c,\rho$ telles que pour tout
$\lambda\in\Lambda$, pour tout  $y \in \Zd$, pour tout $ t\ge0$
\begin{eqnarray}
\P_\lambda(\tau^0=+\infty) & \ge & \rho,
\label{uniftau} \\
\P_\lambda(H^0_t \not\subset B_{Mt} ) & \le & A\exp(-Bt), 
\label{richard} \\
\P_\lambda ( t<\tau^0<+\infty) &\le&  A\exp(-Bt), \label{grosamasfinis} \\
 \P_{\lambda}\left( t^0(y)\ge \frac{\|y\|}c+t,\; \tau^0=+\infty \right) & \le & A\exp(-Bt),
\label{retouche}\\
\P_{\lambda}(0\not\in K'_t, \; \tau^0=+\infty) &\le &A\exp(-B t).
\label{petitsouscouple} 
\end{eqnarray}
\end{prop}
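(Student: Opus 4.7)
La strat\'egie consiste \`a se ramener syst\'ematiquement au processus de contact classique sur $\Zd$ \`a taux constant, en exploitant la construction graphique de Harris et la monotonie stochastique en $\lambda$ qu'elle fournit. Pour tout environnement $\lambda=(\lambda_e)\in\Lambda$, on couple trajectoriellement le processus sous $\P_\lambda$ avec les deux processus homog\`enes sous $\P_{\lambda_{\min}}$ et $\P_{\lambda_{\max}}$, en amincissant ou en compl\'etant les mesures de Poisson d'ar\^etes par des processus de Poisson ind\'ependants d'intensit\'es respectives $\lambda_e-\lambda_{\min}$ et $\lambda_{\max}-\lambda_e$. Ceci fournit l'encadrement $\xi_t^{A,(\lambda_{\min})}\subset\xi_t^{A,(\lambda)}\subset\xi_t^{A,(\lambda_{\max})}$ pour tout $A\subset\Zd$ et tout $t\ge 0$.

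Les deux premi\`eres estim\'ees en d\'ecoulent directement. Pour (\ref{uniftau}), la monotonie donne $\P_\lambda(\tau^0=+\infty)\ge\P_{\lambda_{\min}}(\tau^0=+\infty)=:\rho$, strictement positif puisque $\lambda_{\min}>\lambda_c(\Zd)$. Pour (\ref{richard}), on remarque que $H^0_t$ sous $\P_\lambda$ est contenu dans $H^0_t$ sous $\P_{\lambda_{\max}}$, puis on conclut par comparaison grossi\`ere avec un processus de branchement de taux $2d\lambda_{\max}$, ou par le contr\^ole classique de la vitesse de propagation du processus de contact en environnement d\'eterministe.

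Les trois estim\'ees restantes reposent sur la construction renormalis\'ee en blocs de Bezuidenhout et Grimmett~\cite{MR1071804}. Le point crucial est que l'\'ev\'enement ``bon bloc'' -- existence, dans une fen\^etre finie d'espace-temps partant d'une graine donn\'ee, d'une descendance suffisamment fournie dans les blocs voisins -- est une fonction croissante, localement d\'etermin\'ee par un nombre fini d'ar\^etes. Par monotonie, sa probabilit\'e sous $\P_\lambda$ minore celle sous $\P_{\lambda_{\min}}$, que l'on rend aussi proche de $1$ que voulu en agrandissant le bloc, puisque $\lambda_{\min}>\lambda_c(\Zd)$. On obtient ainsi, uniform\'ement en $\lambda\in\Lambda$, une domination stochastique du processus par une percolation de sites orient\'ee surcritique dont la densit\'e ne d\'epend que de $\lambda_{\min}$. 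Les arguments classiques de Durrett~\cite{MR940469} livrent alors (\ref{grosamasfinis}) via le contr\^ole exponentiel des amas finis en percolation orient\'ee, (\ref{retouche}) via l'estim\'ee de vitesse lin\'eaire conditionnellement \`a la survie, et (\ref{petitsouscouple}) par un argument de restart~: sur l'\'ev\'enement de survie, un instant favorable survient avant $t$ avec probabilit\'e au moins $1-A\exp(-Bt)$, apr\`es quoi $\xi^0$ et $\xi^{\Zd}$ co\"incident en $0$ pour tous les temps ult\'erieurs.

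La principale difficult\'e est d'assurer que toutes les constantes peuvent \^etre choisies ind\'ependamment de $\lambda\in\Lambda$. Cela tient \`a ce que toute la d\'ependance en l'environnement passe par la probabilit\'e du bon bloc, quantit\'e monotone et localement d\'etermin\'ee, uniform\'ement contr\^ol\'ee gr\^ace aux bornes $\lambda_{\min}\le\lambda_e\le\lambda_{\max}$~; les preuves du cas d\'eterministe s'adaptent alors sans modification substantielle, avec des constantes ne d\'ependant que de $\lambda_{\min}$, $\lambda_{\max}$ et $d$.
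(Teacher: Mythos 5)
For (\ref{uniftau}) and (\ref{richard}) your argument coincides with the paper's (stochastic comparison with the homogeneous processes at $\lambda_{\min}$ and $\lambda_{\max}$, the latter via the Richardson model). For (\ref{grosamasfinis}), (\ref{retouche}) and (\ref{petitsouscouple}), however, the paper takes a genuinely different and cleaner route than the one you sketch. Rather than re-running the Bezuidenhout--Grimmett renormalization inside the inhomogeneous environment (with monotonicity making the ``good block'' probability uniformly close to~$1$), the paper couples the inhomogeneous process $\xi$ under $\P_\lambda$ with the homogeneous weak process $\zeta$ at constant rate $\lambda_{\min}$ on the same graphical construction, and sets up a restart procedure: whenever $\zeta$ dies while $\xi$ lives, $\zeta$ is relaunched from the lexicographic minimum of $\xi$'s current configuration. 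Lemma~\ref{restart} shows, uniformly in $\lambda\in\Lambda$, that the number of restarts has a sub-geometric tail, that the total restart time $u_K$ has exponential moments, and -- crucially -- an exact distributional identity: on survival, the post-$u_K$ weak process is a $\lambda_{\min}$-contact process conditioned to survive. This is a black-box reduction: the homogeneous-case estimates of Durrett--Griffeath and Bezuidenhout--Grimmett are invoked only for the constant rate $\lambda_{\min}$, so one never has to check that the BG renormalization still dominates a $1$-dependent oriented percolation when the block probabilities are merely uniformly bounded below and not identically distributed, nor that the seed-to-seed chaining survives the inhomogeneity. Your route is plausible in principle -- the monotonicity argument is the right idea -- but it requires re-verifying all of that machinery, which is considerably less off-the-shelf than your one-paragraph sketch suggests.

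Your treatment of (\ref{petitsouscouple}) has a more concrete gap: the claim that on survival ``a favorable instant occurs before $t$ with probability $\ge 1-A\exp(-Bt)$, after which $\xi^0$ and $\xi^{\Zd}$ coincide at $0$ for all subsequent times'' is essentially a restatement of the estimate being proved rather than a mechanism for it, and no single event observed before time $t$ forces coincidence at $0$ for the entire infinite future. The paper instead decomposes the complement of $K'_s$ over integer times $k$, bounds $\Pbarre_\lambda(B_{\gamma k}\not\subset K_k,\;\tau^0=+\infty)$ via a time-reversal construction combined with Durrett's estimate~(\ref{durrett}) that two independent supercritical contact processes intersect with high probability, and separately controls the propagation of a discrepancy from outside $B_{\gamma k}$ to the origin within one time unit by the Richardson comparison. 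The time-reversal step and the two-process intersection estimate are essential ingredients that your restart one-liner omits.
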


On dispose déjà des estimées de la proposition~\ref{propuniforme} en environnement déterministe homogène $\lambda$, pour $\lambda>\lambda_c(\Zd)$. Le résultat pour les grands $\lambda$ est dû à Durrett et Griffeath~\cite{MR656515}. L'extension à tout le régime surcritique est rendue possible grâce au travail de Bezuidenhout et Grimmett~\cite{MR1071804}. Pour les détails de la preuve de l'inégalité~(\ref{grosamasfinis}), qui en est le point essentiel, voir par exemple l'article de revue de Durrett~\cite{MR1117232} ou la monographie de Liggett~\cite{MR1717346}.

Nous avons choisi de mettre l'accent sur la preuve du théorème de forme asymptotique et des propriétés de stationnarit\'e de de sous-additivit\'e du temps d'atteinte essentiel $\sigma$. Nous admettons dans les parties~\ref{sigma},~\ref{controlediff} et~\ref{forme} les contrôles uniformes de la proposition~\ref{propuniforme}: ils seront établis par des arguments de redémarrage dans la section~\ref{restartestimees}, qui est totalement indépendante du reste de l'article. Un appendice est consacré à la preuve d'un théorème ergodique presque sous-additif adapté à nos besoins.

\section{Propriétés des transformations $\tilde{\theta}_x$}
\label{sigma}

\subsection{Premières propriétés}

On commence par vérifier que $K(x)$ est presque sûrement fini, et donc que le temps d'atteinte essentiel $\sigma(x)$ correctement défini. Pour cela, on montre que sous $\P_\lambda$, la loi de $K(x)$ est sous-géométrique:
\begin{lemme}
\label{Kgeom}
$\displaystyle \forall A\subset\Zd\quad\forall x\in\Zd\quad \forall\lambda\in\Lambda\quad \forall n\in\N\quad\P_\lambda(K(x)>n)\le(1-\rho)^n$.
\end{lemme}

\begin{proof}
Rappelons que $\rho$ est la constante apparaissant dans~(\ref{uniftau}). Soit $\lambda \in \Lambda$ et
$n \in \N$. En utilisant la propriété de Markov forte au temps $u_{n+1}$, on obtient:
\begin{eqnarray*}
\P_\lambda(K(x)>n+1)
& = & \P_\lambda(u_{n+2}(x)<+\infty) \\
&  \le  & \P_\lambda(u_{n+1}(x)<+\infty,v_{n+1}(x)<+\infty) \\
& \le & \P_\lambda(u_{n+1}(x)<+\infty,\tau^x \circ \theta_{u_{n+1}(x)}<+\infty) \\
 & \le & \P_\lambda(u_{n+1}(x)<+\infty)\P_\lambda(\tau^x <+\infty)\\
& \le & \P_\lambda(u_{n+1}(x)<+\infty) (1-\rho)=\P_\lambda(K(x)>n)(1-\rho),
\end{eqnarray*}
ce qui prouve le lemme.
\finpreuve \end{proof}


\begin{lemme}
Soit $\lambda\in\Lambda$. Sous $\P_\lambda$, on a, presque sûrement, pour tout $x$ dans $\Zd$, 
\begin{equation}
\label{bleue}
(K(x)=k)\text{ et }(\tau^0=+\infty) \;  \Longleftrightarrow \; (u_k(x)<+\infty \text{ et } v_k(x)=+\infty),
\end{equation}
\end{lemme}

\begin{proof}
Fixons $\lambda \in \Lambda$. Le lemme~\ref{Kgeom} assure que $K(x)$ est $\P_{\lambda}$ presque sûrement fini. Plaçons-nous dans le cas où le processus de contact issu de $0$ survit, c'est à dire sur $\{\tau^0=+\infty\}$. Soit $k \in \N$: en appliquant la propriété de Markov forte au temps d'arrêt $v_k(x)$, on obtient
\begin{eqnarray*}
& & \P_{\lambda}(\tau^0=+\infty, v_k(x)<+\infty, u_{k+1}(x)=+\infty| \mathcal F_{v_k(x)})\\& = & \1_{\{v_k(x)<+\infty\}}  \P_\lambda(\tau^{\centerdot}=+\infty, t^{\centerdot}(x)=+\infty) \circ \xi^0_{v_k(x)}.
\end{eqnarray*}
Maintenant, soit $B$ une partie finie non vide de $\Zd$: avec l'estim\'ee~(\ref{retouche}), on obtient
\begin{eqnarray*}
\P_\lambda ( \tau^B=+\infty, t^B(x)=+\infty) 
& \le & \sum_{y \in B}\P_\lambda(\tau^y=+\infty, t^y(x)=+\infty) \\
& \le & \sum_{y \in B}\P_{\T{y}{\lambda}}(\tau^0=+\infty, t^0(x-y)=+\infty)=0.
\end{eqnarray*}
Donc $\P_{\lambda}(\tau^0=+\infty, v_k(x)<+\infty, u_{k+1}(x)=+\infty)=0$, ce qui implique que sous $\P_\lambda$, 
\begin{equation}
(K(x)=k)\text{ et }(\tau^0=+\infty) \;  \Longrightarrow \; (u_k(x)<+\infty \text{ et } v_k(x)=+\infty),
\end{equation}
autrement dit le procédé de redémarrage s'arrête parce qu'on a trouvé un instant -- $u_{K(x)}$ -- où la descendance de $x$ est infinie.

La réciproque vient alors de la propriété trajectorielle~(\ref{semigroupe}).
\finpreuve \end{proof}
La construction que nous avons présentée ici est très semblable au procédé de redémarrage de Durrett et Griffeath~\cite{MR656515}. La différence essentielle est que, dans leur article, il s'agit de trouver un point proche de $x$ qui a une descendance infinie, alors qu'ici il faut toucher exactement $x$. 
Ainsi, dès lors que l'on sait que le processus partant de $x$ redémarre, on pourra décrire précisément la loi après redémarrage et ainsi  mettre en place des transformations laissant $\Pbarre$ invariante.

\begin{lemme} 
\label{magic}
Soit $x \in \Zd \backslash \{0\}$, $A$ dans la tribu engendrée par $\sigma(x)$, et $B\in \mathcal F$. Alors
$$\forall \lambda \in \Lambda \quad \Pbarre_\lambda(A \cap (\tilde{\theta}_x)^{-1}(B))=\Pbarre_\lambda(A) \Pbarre_{\T{x}{\lambda}}(B).$$
\end{lemme}

\begin{proof} Il suffit de montrer que pour tout $k\in\N^*$, on a
$$\Pbarre_\lambda(A \cap (\tilde{\theta}_x)^{-1}(B) \cap \{K(x)=k\})=\Pbarre_\lambda(A\cap \{K(x)=k\}) \Pbarre_{\T{x}{\lambda}}(B).$$ Comme $A$ est dans la tribu engendrée par $\sigma(x)$, il existe un borélien $A'$ de $\R$ tel que $A=\{\sigma(x) \in A'\}$. 
Le temps d'attente essentiel $\sigma(x)$ n'est pas un temps d'arrêt, mais  on peut utiliser les temps d'arrêt de la construction séquentielle.  
\begin{eqnarray}
&&\P_\lambda(\{\tau^0=+\infty\} \cap A \cap (\tilde{\theta}_x)^{-1}(B) \cap \{K(x)=k\}) \nonumber \\
& {=}  & \P_\lambda(\tau^0=+\infty, \; \sigma(x) \in A', \;   T_x \circ \theta_{\sigma(x)} \in B, \; u_k(x)<+\infty, \; v_k=+\infty) \label{un}\\
& =& \P_\lambda(u_k(x)<+\infty,  u_k(x) \in A', \; \; \tau^x\circ \theta_{u_k(x)}=+\infty, \;T_x \circ \theta_{u_k(x)}\in B) \label{deux}\\
& {=} & \P_\lambda( u_k(x) \in A', \; u_{k}(x)<+\infty) \P_{\lambda}(\tau^x=+\infty, \; T_x \in  B) \label{trois}\\
& {=} &\P_\lambda( u_k(x) \in A', \; u_{k}(x)<+\infty)\P_{\T{x}{\lambda}}(\{\tau^0=+\infty\} \cap B) \label{quatre}.
\end{eqnarray}
Pour (\ref{un}), on utilise l'équivalence~(\ref{bleue}). Pour l'égalité (\ref{deux}), on remarque que pour tout temps d'arrêt $T$,
\begin{equation}
\label{violette}
\{T<+\infty, \; x \in \xi_T^0, \; \tau^0 \circ T_x \circ \theta_T=+\infty\}\subset \{\tau^0=+\infty\}.
\end{equation}
L'égalité (\ref{trois}) résulte de la propriété de Markov forte appliquée au temps d'arrêt $u_{k}$, tandis que (\ref{quatre}) découle de la propriété de translation spatiale~(\ref{translationspatiale}). En divisant l'identité par $\P_{\lambda}(\tau^0=+\infty)$, on obtient une identité de la forme
$$\Pbarre_\lambda(A \cap (\tilde{\theta}_x)^{-1}(B) \cap \{K(x)=k\})=\psi(x,\lambda,k,A) \Pbarre_{\T{x}{\lambda}}(B),$$
et on identifie la valeur de $\psi(x,\lambda,k,A)$ en prenant $B=\Omega$.
\finpreuve \end{proof}

\begin{coro}
\label{invariancePbarre}
Soient $x$ et $y$ dans $\Zd$ et $\lambda\in\Lambda$. On suppose que $x \neq 0$.
\begin{itemize}
\item La translation $\tilde \theta_x$ laisse $\Pbarre$ invariante.
\item Sous $\Pbarre_\lambda$, $\sigma(y)\circ\tilde{\theta}_x$ est indépendant de $\sigma(x)$. De plus, la loi de $\sigma(y)\circ\tilde{\theta}_x$ sous $\Pbarre_\lambda$ est la même que la loi 
de $\sigma(y)$ sous $\Pbarre_{\T{x}{\lambda}}$.
\item  Les variables $(\sigma(x) \circ (\tilde \theta_{x})^j)_{j \ge 0}$ sont indépendantes sous~$\Pbarre_\lambda$. 
\end{itemize}
\end{coro}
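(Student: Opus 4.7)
Les trois énoncés vont tous découler du Lemme~\ref{magic}, qui fournit la factorisation
$$\Pbarre_\lambda(A\cap\tilde\theta_x^{-1}(B))=\Pbarre_\lambda(A)\,\Pbarre_{\T{x}{\lambda}}(B)$$
pour $A$ dans la tribu engendrée par $\sigma(x)$ et $B\in\mathcal{F}$. Le schéma général est: pour le premier point, prendre $A=\Omega$ et intégrer contre $\nu$; pour le deuxième, prendre $A$ et $B$ portant respectivement sur $\sigma(x)$ et $\sigma(y)$; pour le troisième, itérer le Lemme~\ref{magic} par récurrence. Aucune étape ne présente de véritable difficulté conceptuelle, car l'essentiel du travail a déjà été fait dans le Lemme~\ref{magic}; il ne reste qu'à choisir habilement les événements et à tenir compte des décalages d'environnement.

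\textbf{Invariance de $\Pbarre$.} En prenant $A=\Omega$ dans le Lemme~\ref{magic}, on obtient $\Pbarre_\lambda(\tilde\theta_x^{-1}(B))=\Pbarre_{\T{x}{\lambda}}(B)$. En intégrant contre $\nu$ et en utilisant la stationnarité de $\nu$ sous l'action de $\Zd$ -- le changement de variable $\lambda\mapsto\T{x}{\lambda}$ préserve $\nu$ --, il viendra
$$\Pbarre(\tilde\theta_x^{-1}(B))=\int\Pbarre_{\T{x}{\lambda}}(B)\,d\nu(\lambda)=\int\Pbarre_\lambda(B)\,d\nu(\lambda)=\Pbarre(B).$$

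\textbf{Loi et indépendance pour $\sigma(y)\circ\tilde\theta_x$.} Pour tous boréliens $A',B'$ de $\R$, j'appliquerai le Lemme~\ref{magic} à $A=\{\sigma(x)\in A'\}$ et $B=\{\sigma(y)\in B'\}$. En prenant $A'=\R$, on identifie la loi: $\sigma(y)\circ\tilde\theta_x$ sous $\Pbarre_\lambda$ suit la même loi que $\sigma(y)$ sous $\Pbarre_{\T{x}{\lambda}}$. En réinjectant cette identité dans l'égalité fournie par le lemme, on obtient exactement la factorisation caractérisant l'indépendance de $\sigma(x)$ et $\sigma(y)\circ\tilde\theta_x$ sous $\Pbarre_\lambda$.

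\textbf{Indépendance de la suite $(\sigma(x)\circ\tilde\theta_x^j)_{j\ge 0}$.} Je procéderai par récurrence sur $n\ge 0$ pour établir, \emph{uniformément en $\lambda\in\Lambda$}, que pour tous boréliens $A_0',\ldots,A_n'$ de $\R$,
$$\Pbarre_\lambda\Big(\bigcap_{j=0}^n\{\sigma(x)\circ\tilde\theta_x^j\in A_j'\}\Big)=\prod_{j=0}^n\Pbarre_{\T{jx}{\lambda}}(\sigma(x)\in A_j').$$
Le pas de récurrence appliquera le Lemme~\ref{magic} avec $A=\{\sigma(x)\in A_0'\}$ et $B=\bigcap_{k=0}^{n-1}\{\sigma(x)\circ\tilde\theta_x^k\in A_{k+1}'\}$, en remarquant que $\tilde\theta_x^{-1}(B)=\bigcap_{j=1}^n\{\sigma(x)\circ\tilde\theta_x^j\in A_j'\}$, puis invoquera l'hypothèse de récurrence pour l'environnement $\T{x}{\lambda}$, en utilisant l'identité $\T{kx}{(\T{x}{\lambda})}=\T{(k+1)x}{\lambda}$. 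La factorisation obtenue donnera simultanément la loi marginale de chaque $\sigma(x)\circ\tilde\theta_x^j$ (qui est celle de $\sigma(x)$ sous $\Pbarre_{\T{jx}{\lambda}}$) et leur indépendance mutuelle sous $\Pbarre_\lambda$. La seule véritable subtilité est précisément cette gestion du décalage d'environnement à chaque itération: c'est pour cela qu'il est crucial que le Lemme~\ref{magic} ait été énoncé de façon uniforme en $\lambda\in\Lambda$.
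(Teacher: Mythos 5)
Votre démonstration est correcte et suit essentiellement la même stratégie que celle de l'article : chacun des trois points est déduit du Lemme~\ref{magic} par des choix adaptés de $A$ et $B$, puis une récurrence pour le troisième point avec gestion du décalage $\T{kx}{(\T{x}{\lambda})}=\T{(k+1)x}{\lambda}$. Votre formulation du deuxième point, avec $B=\{\sigma(y)\in B'\}$ de sorte que $(\tilde\theta_x)^{-1}(B)=\{\sigma(y)\circ\tilde\theta_x\in B'\}$, est d'ailleurs un peu plus soignée que celle de l'article, où le $B$ affiché semble comporter un $\tilde\theta_x$ superflu.
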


\begin{proof}
Pour montrer le premier point, il suffit d'appliquer le lemme précédent avec $A=\Omega$, puis d'intégrer en $\lambda$ en utilisant la stationnarité de $\nu$.

Pour le second point, on considère  $A',B'$ deux boréliens de $\R$ et
on applique le lemme~\ref{magic} avec $A=\{\sigma(x)\in A'\}$ et $B=\{\sigma(y) \circ \tilde{\theta}_x\in B'\}$.

Soit enfin $n \ge 1$ et $A_0,A_1, \dots, A_n$ des boréliens de $\R$. On a:
\begin{eqnarray*}
&& \Pbarre_\lambda(\sigma(x) \in A_0, \sigma(x) \circ \tilde \theta_{x} \in A_1, \dots , \sigma(x) \circ (\tilde \theta_{x})^n \in A_n) \\
& = & \Pbarre_\lambda(\sigma(x) \in A_0, (\sigma(x), 
\dots, \sigma(x) \circ (\tilde \theta_{x})^{n-1})\circ \tilde \theta_{x} \in A_1\times  \dots \times A_n) \\
& {=} & \Pbarre_\lambda(\sigma(x) \in A_0) \Pbarre_{\T{x}{\lambda}}(\sigma(x) \in A_1, \sigma(x) \circ \tilde \theta_{x} \in A_2, \dots , \sigma(x) \circ (\tilde \theta_{x})^{n-1} \in A_n),
\end{eqnarray*}
où la dernière égalité vient du lemme~\ref{magic}.
Par récurrence, on obtient
$$\Pbarre_\lambda \left( \bigcap_{0 \le j \le n} \{ \sigma(x) \circ (\tilde \theta_{x})^j \in A_j\}\right)=\prod_{0 \le j \le n} \Pbarre_{\T{jx}{\lambda}} \left(\sigma(x)\in A_j\right),$$
ce qui conclut la preuve du lemme.
\finpreuve \end{proof}

\subsection{Ergodicité des transformations $\tilde{\theta}_x$}

Pour montrer le théorème~\ref{systemeergodique}, 
il est naturel de chercher à estimer, pour des événements $A$ et $B$, 
comment évolue avec $m$ la dépendance entre les  événements  $A$ et $\tilde{\theta}_x^{-m}(B)$. 
Si $m \ge 1$, l'opérateur $\tilde{\theta}_x^m$ réalise globalement une translation spatiale de vecteur $mx$ et une translation temporelle de vecteur $S_m(x)$:
 \begin{eqnarray*}
 \tilde{\theta}_x^m & = & T_{mx} \circ \theta_{S_m(x)}, \\
 \text{avec } S_m(x)  & = & \sum_{j=0}^{m-1} \sigma(x) \circ \tilde{\theta}_x^j.
 \end{eqnarray*}
On commence par établir un lemme dans l'esprit du lemme~\ref{magic}:
\begin{lemme} 
\label{supermagic}
Soit $t>0$, soit $A \in \mathcal F_t$ et soit $B\in \mathcal F$. 
Alors pour tout $x \in \Zd$, pour tout $\lambda \in \lambda$, pour tout $m \ge1$, 
$$\Pbarre_\lambda(A \cap \{t\le S_m(x)\} \cap (\tilde{\theta}_x^m)^{-1}(B))=\Pbarre_\lambda(A\cap \{t\le S_m(x)\}) \Pbarre_{\T{mx}{\lambda}}(B).$$
\end{lemme}

\begin{proof} Posons $\overline{K}_m(x)=(K(x),K(x)\circ\tilde{\theta}_x,\dots,K(x)\circ\tilde{\theta}_x^{m-1})$. Il suffit de montrer que pour tout $k=(k_0,\dots,k_{m-1})\in(\N^*)^m$, on a
\begin{eqnarray*}
& & \Pbarre_\lambda(A, \;t\le S_m(x),\;\tilde{\theta}_x^{-m}(B),\;\overline{K}_m(x)=k) \\
& = & \Pbarre_\lambda(A,\; t\le S_m(x),\; \overline{K}_m(x)=k) \Pbarre_{\T{mx}{\lambda}}(B).
\end{eqnarray*}

\begin{figure}[h!]
\includegraphics[scale=0.5]{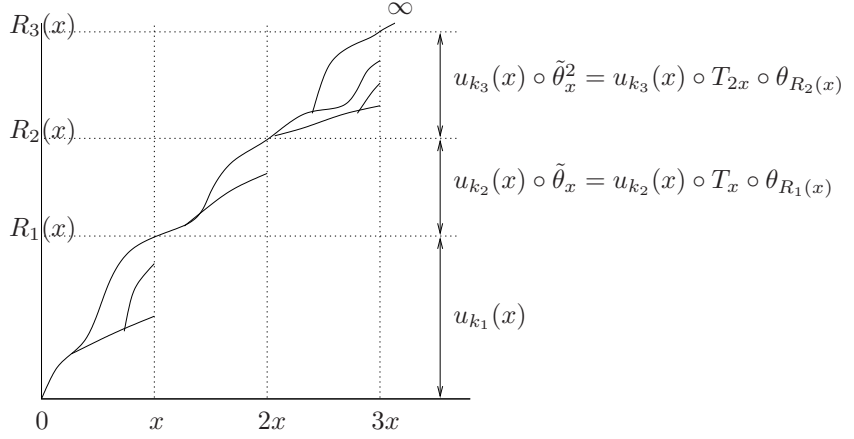}
\put(-165,0){$0$}%
\put(-122,0){$x$}%
\put(-81,0){$2x$}%
\put(-38,0){$3x$}%
\put(-7,40){$u_{k_1}(x)$}
\put(-175,73){$R_1(x)$}%
\put(-7,91){$u_{k_2}(x)\circ \tilde{\theta}_x=u_{k_2}(x)\circ T_x\circ\theta_{R_1(x)}$}
\put(-175,110){$R_2(x)$}%
\put(-7,130){$u_{k_3}(x)\circ \tilde{\theta}_x^2=u_{k_3}(x)\circ T_{2x}\circ\theta_{R_2(x)}$}
\put(-175,150){$R_3(x)$}%
\put(-32,157){$\infty$}
\caption{Un exemple avec $k_1=3$, $k_2=2$ et $k_3=4$.}
\label{uneautrefigure} 
\end{figure}

Soit $k\in (\N^*)^m$. On pose $R_0(x)=0$ et, pour $l \le m-2$, $R_{l+1}(x)=R_l +u_{k_l}(x) \circ \theta_{R_l(x)}$.
Grâce à la remarque~(\ref{violette}), on a l'égalité entre les deux événements suivants:
$$
\left\{
\begin{array}{c}
\tau^0=+\infty ,\\
\overline{K}_m(x)=k
\end{array}
\right\}
=
\left\{
\begin{array}{c}
 u_{k_1}(x)<+\infty, \; 
u_{k_2}(x) \circ T_x\circ \theta_{R_1(x)}<+\infty, \;   \dots , \\
  u_{k_m}(x) \circ T_{(m-1)x} \circ \theta_{R_{m-1}(x)}<+\infty,\\
 \tau^0 \circ T_{mx}\circ \theta_{R_m(x)}=+\infty
\end{array}
\right\}
$$
ainsi que, sur cet événement,  l'identité $S_m(x)=R_m(x)$. Ainsi,
\begin{eqnarray*}
\P_\lambda
\left(
\begin{array}{c}
\tau^0=+\infty,\;  A,\\
 t\le S_m(x),\\
\overline{K}_m(x)=k, \; \tilde{\theta}_x^{-m}(B)
\end{array}
\right)
& = & \P_\lambda
\left(
\begin{array}{c}
A, \; u_{k_1}(x)<+\infty, \;\\
 u_{k_2}(x) \circ T_x\circ \theta_{R_1(x)}<+\infty, \dots, \\ 
u_{k_m}(x) \circ T_{(m-1)x} \circ \theta_{R_{m-1}(x)}<+\infty, \;\\ t\le R_m(x), \; 
\tau^0 \circ T_{mx}\circ \theta_{R_m(x)}=+\infty, \;\\ T_{mx} \circ \theta_{R_m(x)} \in B
\end{array}
\right).
\end{eqnarray*}
Par construction, $R_m(x)$ est un temps d'arrêt et l'événement
$$ A \cap \{u_{k_1}(x)<+\infty\}  \cap \dots \cap \{ u_{k_m}(x) \circ T_{(m-1)x} \circ \theta_{R_{m-1}(x)}<+\infty\} \cap \{t\le R_m(x)\}$$
est dans $\mathcal F_{R_m(x)}$. En utilisant la propriété de Markov forte et la propriété de translation spatiale~(\ref{translationspatiale}), il vient donc:
\begin{eqnarray*}
\P_\lambda
\left(
\begin{array}{c}
\tau^0=+\infty,\;  A,\\
 t\le S_m(x),\\
\overline{K}_m(x)=k, \; \tilde{\theta}_x^{-m}(B)
\end{array}
\right)
& = & \P_\lambda
\left(
\begin{array}{c}
A, \; u_{k_1}(x)<+\infty, \; \\
u_{k_2}(x) \circ T_x\circ \theta_{u_{k_1}(x)}<+\infty, \dots \\ 
u_{k_m}(x) \circ T_{(m-1)x} \circ \theta_{R_{m-1}(x)}<+\infty, \;\\ t\le R_m(x) 
\end{array}
\right) \\
&& \quad \times \P_{\T{mx}{\lambda}}(\{\tau=+\infty\} \cap B ).
\end{eqnarray*}
En divisant l'identité par $\P_{\lambda}(\tau=+\infty)$, on obtient une identité de la forme
$$\Pbarre_\lambda(A,\; t\le S_m(x),\; \tilde{\theta}_x^{-m}(B),\; \overline{K}_m(x)=k)=\psi(x,\lambda,k,m,A) \Pbarre_{\T{mx}{\lambda}}(B),$$
et on identifie la valeur de $\psi(x,\lambda,k,m,A)$ en prenant $B=\Omega$.
\finpreuve \end{proof}

Ainsi, on peut maintenant énoncer une propriété de mélange.
\begin{lemme}
\label{dominationsm}
Soit $t>0$ et $q>1$. Il existe une constante $A(t,q)$ telle que pour tout $x \in \Zd \backslash \{0\}$, pour tout
 $A \in \mathcal F_t$,  pour tout $B\in \mathcal F$, $\lambda \in \Lambda$ et  tout ${\ell} \ge1$, on a 
$$|\Pbarre_\lambda(A  \cap (\tilde{\theta}_x^{\ell})^{-1}(B))-\Pbarre_\lambda(A) \Pbarre_{\T{{\ell}x}{\lambda}}(B)|\le A(t,q)q^{-{\ell}}.$$
\end{lemme}

\begin{proof}
Soit ${\ell} \ge 1$ quelconque. Avec le lemme~\ref{supermagic}, on a
\begin{eqnarray*}
 & & |\Pbarre_{\lambda}(A\cap \tilde{\theta}_x^{-\ell}(B))-\Pbarre_{\lambda}(A) \Pbarre_{\lambda}(\tilde{\theta}_x^{-\ell}(B))|\\
& \le &  |\Pbarre_{\lambda}(t\le S_{\ell}(x),A\cap \tilde{\theta}_x^{-\ell}(B))-\Pbarre_{\lambda}(t\le S_{\ell}(x),A) \Pbarre_{\lambda}(\tilde{\theta}_x^{-\ell}(B))|\\ & & \quad\quad +2\Pbarre_{\lambda}(t>S_{\ell}(x))\\
& = & 2\Pbarre_{\lambda}(t>S_{\ell}(x)).
\end{eqnarray*}
Maintenant, fixons nous $\alpha>0$.\\
Avec l'inégalité de Markov, on a $\Pbarre_{\lambda}(S_{\ell}(x) \le t)  \le  \exp(\alpha t) \Ebarre_\lambda(\exp(-\alpha S_{\ell}(x)))$. En utilisant
 les deux derniers points du corollaire~\ref{invariancePbarre}, il vient
\begin{eqnarray*}
 \Ebarre_\lambda(\exp(-\alpha S_{\ell}(x))) 
& \le &\Ebarre_\lambda \left( \exp\left( -\alpha \sum_{j=0}^{{\ell}-1} \sigma(x) \circ \tilde{\theta}_x^j \right) \right) \\
& \le &  \prod_{j=0}^{{\ell}-1}\Ebarre_\lambda \left( \exp(- \alpha \sigma(x) \circ \tilde{\theta}_x^j) \right) 
 =   \prod_{j=0}^{{\ell}-1}\Ebarre_{\T{jx}{\lambda}} ( \exp(- \alpha \sigma(x)).
\end{eqnarray*}
Il nous reste donc à prouver l'existence d'un $\alpha>0$ tel que pour tout $\lambda \in \Lambda$,
$$\Ebarre_{{\lambda}} ( \exp(- \alpha \sigma(x)) \le q^{-1}.$$
Soit $\rho$ la constante donnée dans l'inégalité~(\ref{uniftau}). 
\begin{eqnarray*}
\Ebarre_\lambda ( \exp(- \alpha \sigma(x))) & \le & \frac1\rho \E_\lambda(\exp(- \alpha \sigma(x))) \le \frac1\rho \E_{\lambda_{\max}}(\exp(- \alpha \sigma(x)))\le\frac1{\rho}\frac{2d\lambda_{max}}{\alpha+2d\lambda_{\max}},
\end{eqnarray*}
car $\sigma(x)\ge t(x)$, qui lui-même domine stochastiquement une variable exmonentielle de paramètre $2d\lambda_{\max}$. Ceci donne bien l'inégalité voulue si l'on prend $\alpha$ assez grand.
\finpreuve
\end{proof}

On a maintenant le matériel nécessaire pour passer à la preuve de l'ergodicité des systèmes $(\Omega,\mathcal{F},\Pbarre,\tilde{\theta}_x)$.

\begin{proof}[\debutpreuve du théorème~\ref{systemeergodique}]
On a déjà vu dans le corollaire~\ref{invariancePbarre} que, pour tout $x \in \Zd$, la probabilité $\Pbarre$ est invariante sous $\tilde{\theta}_x$. Pour la preuve de l'ergodicité, on a besoin de complexifier l'espace afin de pouvoir regarder conjointement un environnement aléatoire et un processus de contact aléatoire.

On pose ainsi $\tilde{\Omega}=\Lambda\times\Omega$, que l'on munit de la tribu $\tilde{\mathcal{F}}=\bor[\Lambda]\otimes\mathcal{F}$
et on définit une mesure de probabilité $\Qbarre$ sur $\tilde{\mathcal{F}}$ par
$$\forall (A,B)\in \bor[\Lambda]\times\mathcal{F}\quad \Qbarre(A\times B)=\int_{\Lambda} \1_A(\lambda)\Pbarre_{\lambda}(B)\ d\nu(\lambda).$$
Définissons la transformation $\tilde{\Theta}_x$ sur $\tilde{\Omega}$ en posant
$\tilde{\Theta}_x(\lambda,\omega)=(x.\lambda,\tilde{\theta}_x(\omega))$. Il est facile de voir que la transformation
 $\tilde{\Theta}_x$ laisse $\Qbarre$ invariant. En effet, pour
$(A,B)\in \bor[\Lambda]\times\mathcal{F}$, en utilisant le lemme~\ref{magic}, on a
\begin{eqnarray*}
\Qbarre(\tilde{\Theta}_x(\lambda, \omega)\in A\times B) & = & \Qbarre(x.\lambda \in A, \; \tilde{\theta}_x(\omega)\in B)\\
& = & \int_{\Lambda}\1_A(x.\lambda)\Pbarre_{\lambda}(\tilde{\theta}_x(\omega)\in B)\ d\nu(\lambda)\\
& = & \int_{\Lambda}\1_A(x.\lambda)\Pbarre_{x.\lambda}(B) \ d\nu(\lambda)\\
& = & \int_{\Lambda}\1_A(\lambda)\Pbarre_{\lambda}(B) \ d\nu(\lambda)
 =  \Qbarre(A\times B).
\end{eqnarray*}
Remarquons que si $g(\lambda,\omega)=f(\lambda)$, alors $\int g\ d\Qbarre=\int f\ d\nu$. \\
De même, si $g(\lambda,\omega)=f(\omega)$, alors $\int g\ d\Qbarre=\int f\ d\Pbarre$.

Comme $\mathcal{A}=\miniop{}{\cup}{t\ge 0}\mathcal{F}_t$ est une algèbre qui engendre $\mathcal{F}$, il suffit, 
pour montrer que  $\tilde{\theta}_x$ est ergodique, de montrer que
pour tout $A\in\mathcal{A}$, 
\begin{equation}
\label{lacon}
\text{la suite } \frac1{n}\miniop{n-1}{\sum}{k=0}\1_A(\tilde{\theta}_x^k)
\text{ converge dans } L^2(\Pbarre) \text{ vers }\Pbarre(A).
\end{equation}
On peut voir la quantité ci-dessus comme une fonction des deux variables~$(\lambda,\omega)$.
Ainsi, il est équivalent de montrer que la suite de fonctions $(\lambda,\omega)\mapsto \frac1{n}\miniop{n-1}{\sum}{k=0}\1_A(\tilde{\theta}_x^k\omega)$ converge vers $\Pbarre(A)$ dans $L^2(\Qbarre)$.

Soit $A\in\mathcal{A}$ et $t>0$ tel que $A\in\mathcal{F}_t$. On décompose, pour tout $(\omega,\lambda)\in\tilde{\Omega}$, la somme en deux termes:
\begin{eqnarray*}
\frac1{n}\sum_{k=0}^{n-1}\1_A(\tilde{\theta}_x^k\omega)
& =& 
\frac1{n}\sum_{k=0}^{n-1}\left(\1_A(\tilde{\theta}_x^k\omega)-\Pbarre_{kx.\lambda}(A)\right)+\frac1{n}\sum_{k=0}^{n-1}\Pbarre_{kx.\lambda}(A)
\end{eqnarray*}
Si l'on pose $f(\lambda)=\Pbarre_{\lambda}(A)$,
le second terme peut s'écrire
$$\frac1{n}\sum_{k=0}^{n-1}\Pbarre_{kx.\lambda}(A)=\frac1{n}\sum_{k=0}^{n-1} f(kx.\lambda).$$
Comme $\nu$ est ergodique, le théorème ergodique $L^2$ de Von Neumann dit que cette quantité converge dans $L^2(\nu)$ vers $\int fd\nu=\Pbarre(A)$.
En la regardant comme une fonction des deux variables $(\lambda,\omega)$, cela dit aussi que la  quantité converge  dans $L^2(\Qbarre)$ vers $\Pbarre(A)$.

Posons, pour $k\ge 0$, $$Y_k=\1_A(\tilde{\theta}_x^k\omega)-\Pbarre_{\lambda}(\tilde{\theta}_x^{-k}(A))=\1_A(\tilde{\theta}_x^k\omega)-\Pbarre_{kx.\lambda}(A)$$
et $L_n=Y_0+Y_1+\dots+Y_{n-1}$.
Il reste donc à montrer que $L_n/n$ converge vers~$0$ dans $L^2(\Qbarre)$. 
Comme $Y_k=Y_0\circ \tilde{\Theta}_x^k$, le champ $(Y_k)_{k\ge 0}$ est stationnaire. On a donc
\begin{eqnarray*} 
\int L_n^2 \ d\Qbarre & = & \sum_{0\le i,j\le n-1}\int  Y_i Y_j\ d\Qbarre \\
& =& \sum_{i=0}^{n-1} \int Y_i^2\ d\Qbarre+2\sum_{\ell=1}^{n-1} (n-\ell) \int  Y_0 Y_{\ell}\ d\Qbarre\\
& \le & 2n \left(\sum_{\ell=0}^{+\infty} \left|\int  Y_0 Y_{\ell}\ d\Qbarre\right|  \right)\\
& \le & 2n \left(\sum_{\ell=0}^{+\infty} \int_{\Lambda}|\Ebarre_{\lambda}  (Y_0 Y_{\ell})|\ d\nu(\lambda)  \right)\\
& \le & 2n \left(\sum_{\ell=0}^{+\infty} \int_{\Lambda}| \Pbarre_{\lambda}(A\cap \tilde{\theta}_x^{-\ell}(A))-\Pbarre_{\lambda}(A) \Pbarre_{\lambda}(\tilde{\theta}_x^{-\ell}(A))|\ d\nu(\lambda)  \right)\\
& \le & 2n \left(\sum_{\ell=0}^{+\infty} A(t,2)2^{-\ell} \right)=4A(t,2)n,
\end{eqnarray*}
grâce au lemme~\ref{dominationsm}. Ceci termine la preuve de  la convergence~(\ref{lacon}) et du théorème~\ref{systemeergodique}.
\finpreuve\end{proof}


\section{Contrôle de l'écart $\sigma(x)-t(x)$ à environnement fixé}
\label{controlediff}
Dans cette section, nous allons contrôler 
le défaut de sous-additivité de $\sigma$, c'est-à-dire les quantités du type $\sigma(x+y)-\sigma(x)-\sigma(y)\circ \tilde{\theta}_x$ d'une part,
et la différence $\sigma(x)-t(x)$ d'autre part.
Ces résultats seront utilisés pour
appliquer un  théorème ergodique presque sous-additif dans la section~\ref{forme}. Dans les deux cas, on va appliquer une procédure de redémarrage, argument fréquemment utilisé dans l'étude des systèmes de particules: ici, vu la définition de $\sigma$ on devra contrôler
des sommes de variables aléatoires de type $v_i-u_i$ et $u_{i+1}-v_i$,  qui sont de nature assez différente:
\begin{itemize} 
\item La durée de vie $v_i(x) -u_i(x)$ de la descendance de $x$ peut être majorée de manière indépendante de la valeur précise de la configuration au temps $u_i(x)$ puisqu'on ne regarde qu'une descendance issue d'un unique point. Le contrôle de ces contributions est donc assez simple.
\item En revanche, le temps $u_{i+1}(x)-v_i(x)$ nécessaire à la réinfection du point $x$  dépend évidemment de la configuration au temps $v_i(x)$, que l'on peine à contrôler précisément et surtout uniformément en $x$. Cela explique que l'argument de redémarrage utilisé ici soit plus complexe et les estimées obtenues moins bonnes que dans des situations plus classiques où on peut avoir des contrôles exponentiels, comme ce sera le cas dans la section~\ref{restartestimees}. 
\end{itemize}
Comme illustration du premier point, on obtient facilement:

\begin{lemme}
\label{LEMtpsdevie}
Il existe des constantes $A,B>0$ telles que pour tout $\lambda \in \Lambda$,
\begin{equation}
\forall x \in \Zd \quad \forall t>0 \quad \Pbarre_{\lambda}(\exists i<K(x): \; v_i(x)-u_i(x) >t) \le A\exp(-Bt).
\end{equation}
\end{lemme}

\begin{proof}
Soit $F:\Omega \to \R$ une fonction mesurable positive et $x \in \Zd$. Posons
$$\mathcal{L}_x(F)=\sum_{i=0}^{+\infty}\1_{\{u_i(x)<+\infty\}} F\circ \theta_{u_i(x)}.$$
Avec la propriété de Markov et la définition de $K(x)$, on a
\begin{eqnarray*}
\E_{\lambda}[\mathcal{L}_x(F)] & = &\sum_{i=0}^{+\infty}\E_{\lambda}[\1_{\{u_i(x)<+\infty\}}]\E_{\lambda}[ F]= \left(1+\sum_{i=0}^{+\infty}\P_{\lambda}(K(x)>i)\right)\E_{\lambda}[F]\\ 
& =& (1+\E_{\lambda}[K(x)])\E_{\lambda}[F]\le\left( 1+\frac1\rho\right)\E_{\lambda}[F],
\end{eqnarray*}
la dernière inégalité provenant du lemme~\ref{Kgeom}. On choisit $F=\1_{\{t<u_i(x)-v_i(x)<+\infty\}}$, et avec l'inégalité~(\ref{uniftau}), il vient :
\begin{eqnarray*}
\Pbarre_{\lambda}(\exists i<K(x): \; v_i(x)-u_i(x) >t) & \le &
\frac{1}{\rho}\P_{\lambda}(\exists i<K(x): \; v_i(x)-u_i(x) >t) \\
& \le & \frac{1}{\rho} \P_{\lambda}(\mathcal{L}_x(F) \ge 1) 
 \le \frac{1}{\rho} \E_{\lambda}[\mathcal{L}_x(F)]  \\
 & \le & \frac{1}{\rho}\left( 1+\frac1\rho\right) \P_{\lambda}(t<\tau^x<+\infty).
\end{eqnarray*}
On conclut alors avec l'inégalité~(\ref{grosamasfinis}).
\end{proof}

Pour traiter les temps de réinfection de type $u_{i+1}(x)-v_i(x)$, l'idée est de chercher, proche du point $(x,u_i(x))$ -- en coordonnées spatio-temporelles --, un point $(y,t)$, infecté depuis $(0,0)$, de temps de vie infini: on pourra alors, avec l'estimée de croissance au moins linéaire, qu'on peut réinfecter $x$, pas trop longtemps après $v_i(x)$, en partant de ce nouveau point source $(y,t)$. Toute la difficulté consiste à bien contrôler la distance entre $(x,u_i(x))$ et un point source $(y,t)$.
 Si la configuration autour de $(x,u_i(x))$ est "raisonnable",  ce point sera proche de $(x,u_i(x))$, ce qui donnera le contrôle souhaité entre $u_{i+1}(x)$ et $u_i(x)$.

On rappelle que pour tout $x\in\Zd$, $\omega_x$ est la mesure ponctuelle dont le support est formé par les temps des morts possibles au site  $x$, et que $M$ (qu'on peut supposer supérieur ou égal à $1$) et $c$ sont donnés dans les estimées~(\ref{richard}) et~(\ref{retouche}). On note
\begin{equation}
\gamma=3M(1+1/c)>3.
\label{gamma}
\end{equation}
Pour $x,y\in\Zd$ et $t>0$, on dit que le point $y$ a une mauvaise croissance par rapport à $x$ si l'événement suivant est réalisé:
\begin{eqnarray*}
E^y(x,t)& =& \{\omega_y[0, t/2]=0\} \cup \{H^y_{ t}\not\subset y+B_{M t}\} \\
& \cup & \{ t/2<\tau^y<+\infty\} \cup \{\tau^y=+\infty, \; \inf\{s\ge 2 t: \; x\in \xi^y_{s}\}> \gamma t\},
\end{eqnarray*}
Dans notre cadre, on travaille avec le formalisme des mesures ponctuelles de Poisson pour calculer le nombre de points à croissance mauvaise par rapport à $x$ dans une boîte spatio-temporelle autour de $(x,0)$: pour tout $x\in\Zd$, tout $L> 0$ et tout $t>0$, on définit le nombre de points à croissance mauvaise dans une boîte spatio-temporelle:
$$N_L(x,t)=\sum_{y\in x+B_{M t+2}} 
\int_0^L \1_{E^y(x,t)}\circ \theta_s \ d \left( \omega_y+\sum_{e\in\Ed:y\in e}\omega_e+\delta_0 \right)(s).$$
Autrement dit, on compte le nombre de couple $(y,s)$ dans la boîte spatio-temporelle $(x+B_{M t+1})\times[0,L]$ tels que 
\begin{itemize}
\item il se passe quelque chose pour $y$ au temps $s$, soit une mort possible, soit une infection possible (et on rajoute l'instant $0$ pour des raisons techniques);
\item l'événement $E^y(x,t)\circ \theta_s$ est réalisé.
\end{itemize}
Nous allons tout de suite  voir que si la boîte spatio-temporelle ne contient que des points avec une bonne croissance, et que $u_i(x)$ est dans la fenêtre temporelle, alors on contrôle le délai avant la prochaine réinfection $u_{i+1}(x)$:

\begin{lemme}
\label{futfut}
Si $N_L(x,t)\circ\theta_{s}=0$
et $s+ t\le u_i(x)\le s+L$, alors
 $v_i(x)=+\infty$ ou $u_{i+1}(x)-u_i(x)\le \gamma t$.
\end{lemme}

\begin{proof}
Par définition de $u_i(x)$, le point $x$ est infecté depuis $(0,0)$ au temps $u_i(x)$. Comme $s+t \le u_i(x)\le s+L$ et que c'est un temps d'infection possible pour $x$, la non-réalisation de $E^x(x,t) \circ \theta_{u_i(x)}$ assure que $\tau^x\circ\theta_{u_i(x)}=+\infty$ ou 
 $\tau^x\circ\theta_{u_i(x)}\le t/2$.
Si $\tau^x\circ\theta_{u_{i}(x)}=+\infty$, c'est terminé car alors $v_{i}(x)=+\infty$.
Sinon, on peut déjà noter que $v_{i}(x)-u_i(x)\le t/2$.

Par définition, il existe un chemin d'infection $\gamma_i :  [0,u_i(x)]\rightarrow \Z^d$ entre $(0,0)$ et $(x,u_i(x))$, c'est-à-dire tel que $\gamma_i(0)=0$ et $\gamma_i(u_i(x))=x$. 
Considérons la portion du chemin d'infection $\gamma_i$ comprise entre les instants de $u_i(x)- t$ et $u_i(x)$. Notons $x_0=\gamma_i(u_i(x))$: nous allons voir que $x_0 \in x+B_{M t+2}$. En effet, si $x_0 \notin x+B_{M t+2}$, on regarde le prochain instant $t_1$ où $\gamma_i$ entre dans $x+B_{M t+2}$ au point, disons, $x_1$ (remarquons que comme $x_1$ est à la frontière intérieure de $x+B_{M t+2}$, alors $\|x-x_1\|_\infty \ge Mt+1$): c'est un instant de possible infection pour $x_1$, et la non-réalisation de $E^{x_1}(x,t) \circ \theta_{t_1}$ assure que $x$ ne pourra être atteint avant un délai $t$  depuis $(x_1,t_1)$, ce qui contredit le fait que $u_i(x)-t\ge 0$. 

Donc $x_0 \in x+B_{M t+2}$: comme $N_L(x,t)\circ\theta_{s}=0$, tout intervalle de temps de longueur supérieure à $t/2$ inclus dans $[s,s+L]$ au dessus de $x_0$ contient une guérison possible; on en déduit que le premier instant $t_2$ où le chemin $\gamma_i$ partant de $(x_0,u_i(x)- t)$ saute en un autre point $x_2$ vérifie $t_2 \le u_i(x)-t+t/2=u_i(x)-t/2$. Ainsi, au moment où $(x_2,t_2)$ infecte $(x,u_i(x))$, il a vécu un temps au moins $t/2$, et la non-réalisation de $E^{x_2}(x,t) \circ \theta_{t_2}$ assure alors qu'il vit infiniment et que 
$$\inf\{u\ge 2 t: \; x \in \xi^{x_2}_{u}\}\circ\theta_{t_2}\le \gamma t.$$
Ainsi il existe $t_3\in [t_2+2 t,t_2+ \gamma t]$, avec 
$x\in\xi^0_{t_3}$. Comme $v_{i}(x)-u_i(x)\le t/2$, 
On a $$t_3\ge t_2+2 t\ge (u_i(x)- t)+2 t=u_i(x)+ t\ge v_i(x).$$
Finalement, $u_{i+1}(x)-u_i(x)\le t_3-u_i(x)\le t_2-u_i(x)+ \gamma t\le \gamma t$.
\finpreuve
\end{proof}

Il faut maintenant vérifier qu'il est très probable qu'une boîte spatio-temporelle ne contienne que des points ayant une bonne croissance vis-à-vis de $x$:

\begin{lemme}
\label{NL}
Il existe des constantes $A_{\ref{eNL}},B_{\ref{eNL}}>0$  telles que pour tout $\lambda\in \Lambda$,
\begin{equation}
\label{eNL}
\forall L>0 \quad \forall x\in\Zd\quad \forall t>0 \quad \P_{\lambda}(N_L(x,t)\ge 1)\le A_{\theequation}(1+L)\exp(-B_{\theequation}t).
\end{equation}
\end{lemme}

\begin{proof} 
Montrons qu'il existe des constantes positives $A_{\ref{eEE}},B_{\ref{eEE}}$ telles que pour tout $\lambda \in \Lambda$, 
\begin{equation}
\label{eEE}
\forall x\in\Zd\quad  \forall t>0 \quad\forall y\in  x+B_{M t+2}\quad \P_{\lambda}( E^y(x,t))\le A_{\theequation}\exp(-B_{\theequation}t).
\end{equation}
Soit $x \in \Zd$, $t>0$ et $y \in  x+B_{M t+2}$.
Si $\tau^y=+\infty$, il existe $z\in \xi^y_{2 t}$ avec $\tau^z\circ\theta_{2t}=+\infty$.
Ainsi, avec la définition~(\ref{gamma}) de $\gamma$,
\begin{eqnarray*}
& & \{\tau^y=+\infty, \; \inf\{s\ge 2t: \; x\in \xi^y_{s}\}> \gamma t\}\\
 & \subset &\{\xi^y_{2 t}\not\subset y+B_{2M t}\}
\cup \bigcup_{z\in y+B_{2M t}}
\{ t^z(x)\circ \theta_{2t}>(\gamma-2M) t \}\\
 & \subset &\{\xi^y_{2t}\not\subset y+B_{2Mt}\}
  \cup\bigcup_{z\in y+B_{2M t}}\left\{t^z(x) \circ \theta_{2t}>\frac{\|x-z\|}c+Mt-\frac{3}c    \right\}.
\end{eqnarray*}
D'où, avec les inégalités~(\ref{richard}) et~(\ref{retouche}),
\begin{eqnarray*}
& &\P_{\lambda}(\tau^y=+\infty, \;\inf\{s\ge 2 t: \; x\in \xi^y_{s}\}> \gamma t)\\ 
&\le& A\exp(-2BM t)+(1+4Mt)^d A\exp(-B(Mt-3/c)).
\end{eqnarray*}
Le nombre $\omega_y([0, t/2])$  de morts possibles sur le site $y$
entre l'instant $0$ et l'instant $t/2$ suit une loi de Poisson de paramètre~$t/2$, donc pour chacune des $2d$ arêtes touchant $y$, on a 
$$\P_{\lambda}(\omega_y([0, t/2])=0)= \exp(- t/2).$$
Les deux autres termes se contrôlent avec les estimées~(\ref{richard}) et~(\ref{grosamasfinis}), ce qui prouve~(\ref{eEE}).

Fixons maintenant $y\in x+B_{Mt+2}$.
Notons $\displaystyle \beta_y=\omega_y+\sum_{e\in\Ed:y\in e}\omega_e$.
Sous~$\P_{\lambda}$, la mesure $\beta_y$ est une réalisation d'un processus ponctuel d'intensité $2d\lambda_e+1$.
Soit $S_0=0$ et $(S_n)_{n\ge 1}$ la suite croissante des instants donnés par ce processus. 
$$\int_0^L \1_{E^y(x,t)}\circ \theta_s \ d(\beta_y+\delta_0)(s)=\sum_{n=0}^{+\infty}\1_{\{S_n\le L\}}\1_{E^y(x,t)}\circ \theta_{S_n},$$
d'où, avec la propriété de Markov,
\begin{eqnarray*}
&& \E_{\lambda}\left(\int_0^L \1_{E^y(x,t)}\circ \theta_s \ d(\beta_y+\delta_0)(s)\right) \\
& =& \sum_{n=0}^{+\infty}\E_{\lambda}\left( \1_{\{S_n\le L\}}\1_{E^y(x,t)}\circ \theta_{S_n}] \right) 
  = \sum_{n=0}^{+\infty}\E_{\lambda} \left( \1_{\{S_n\le L\}} \right) \P_{\lambda}(E^y(x,t))\\ 
& = & \left(1+ \E_{\lambda}[\beta_y([0,L])] \right) \P_{\lambda}(E^y(x,t)) =  (1+L(2d\lambda_e+1))\P_{\lambda}(E^y(x,t)).
\end{eqnarray*}
En utilisant~(\ref{eEE}) et en remarquant que $\P_\lambda(N_L(x,t) \ge 1) \le \E_\lambda[N_L(x,t)]$, on 
termine la preuve.
\finpreuve
\end{proof}

Afin de pouvoir contr\^oler les $u_{i+1}(x)-u_i(x)$ de proche en proche à l'aide du lemme~\ref{futfut}, on doit amorcer le proc\'ed\'e. 
On va supposer pour cela qu'il existe un point $(u,T)$, atteint depuis $0$, de descendance infinie et proche en espace de $x$:
\begin{lemme}
\label{futfut2}
Pour tout $t,T>0$, pour tout $x \in \Zd$, on a l'inclusion suivante:
\begin{eqnarray}
&&\{\tau^0=+\infty\} \label{cavit}\\
&&\cap\left\{\exists u\in x+B_{Mt+2}, \; \tau_u\circ\theta_T=+\infty, \; u\in\xi^0_T\right\} \label{intermediaire}\\
&&\cap \left\{N_{K(x)\gamma t}(x,t)\circ\theta_{T}=0\right\} 
\label{Ntamere} \\ 
&& \cap \bigcap_{1\le i<K(x)} \{v_i(x)-u_i(x)< t\} \label{pasdegrossaut} \\ 
&& \quad \quad \quad \subset \{\tau^0=+\infty\} \cap \left\{\sigma(x)\le T+K(x)\gamma t \right\}. \label{incluse}
\end{eqnarray}
\end{lemme}


\begin{proof}
Si tous les $u_i(x)$ finis sont plus petits que  $T+t$, il n'y a rien à démontrer puisqu'alors
$\sigma(x)\le T+t\le T+K(x)\gamma t$. 
Soit donc $$i_0=\max\{i: u_i(x)\le T+t\}.$$
Comme $v_{i_0}(x)<+\infty$,  l'événement~(\ref{pasdegrossaut}) assure  que 
$ v_{i_0}(x)-u_{i_0}(x)< t,$ et donc
$v_{i_0}(x)\le T+ 2t.$
 Maintenant, comme $\tau^u=+\infty$, la non-r\'ealisation de $E^u(x,t) \circ \theta_T$ impliqu\'ee par~(\ref{Ntamere})
assure que
$$\inf\{s \ge 2t: \; x \in \xi^{u}_s\}\circ \theta_{T} \le \gamma t,$$
ce qui implique que $u_{i_0+1}(x) \le T+\gamma t$.
En remarquant que, par définition de $i_0$, pour tout $j \ge 1$, 
$$u_{i_0+j}(x+y)\ge T+ {t},$$
on montre alors, par récurrence sur $j\le K(x)-i_0$, en appliquant le lemme~\ref{futfut} avec l'événement $\{N_{K(x)\gamma t}(x, t)\circ\theta_{T}=0\}$, que
$$\forall j\in\{1,\dots, K(x)-i_0\}\quad u_{i_0+j}\le T+j\gamma {t};$$
ceci permet d'obtenir, pour $j=K(x)-i_0$,
$$
\sigma(x)  =  u_{i_0+j}(x)\le T+(K(x)-i_0)\gamma t 
 \le  T+K(x)\gamma t,
$$
et achève de prouver l'inclusion~(\ref{incluse}). 
\end{proof}


\subsection{Contrôle du défaut de sous-additivité}

On utilise la stratégie que l'on vient d'expliquer, autour du point $x+y$. Ici, on bénéficie de l'existence d'un départ infini en un point bien précis $(x+y, \sigma(x)+\sigma(y) \circ \tilde{\theta}_y)$ pour appliquer le lemme~\ref{futfut2}.

\begin{proof}[\debutpreuve du théorème~\ref{presquesousadditif}]
Soit $x,y \in \Zd$, $\lambda \in \Lambda$ et $t>0$. On pose $T=\sigma(x)+\sigma(y)\circ\tilde{\theta}_x$.
\begin{eqnarray*}
&& \Pbarre_{\lambda}( \sigma(x+y)>\sigma(x)+\sigma(y)\circ\tilde{\theta}_x+t) \\
& \le & \Pbarre_{\lambda}\left( K(x+y)> \frac{\sqrt t}{\gamma} \right) +\Pbarre_{\lambda}
\left(
\begin{array}{c}
\tau^0=+\infty, \; K(x+y) \le \frac{\sqrt t}{\gamma}  \\
\sigma(x+y)\ge T + K(x+y)\gamma \sqrt t
\end{array}
\right).
\end{eqnarray*}
Le comportement sous-g\'eom\'etrique de la queue de distribution de $K$ donn\'ee par le lemme~\ref{Kgeom} et le contr\^ole uniforme de la probabilit\'e de survie~(\ref{uniftau}) permettent de contr\^oler le premier terme. Remarquons que si $K(x+y) \le \frac{\sqrt t}{\gamma}$, alors $K(x+y) \gamma \sqrt t \le t$, et donc que 
$$\{N_{K(x+y) \gamma \sqrt t}(x+y,\sqrt t) \ge 1\} \subset \{N_{t}(x+y,\sqrt t) \ge 1\}.$$
On applique alors
le lemme~\ref{futfut2} autour de $x+y$, avec une \'echelle $\sqrt t$, un temps de d\'epart $T=\sigma(x)+\sigma(y)\circ\tilde{\theta}_x$ et un point-source $u=x+y$:
\begin{eqnarray*}
&& \Pbarre_{\lambda} \left(
\begin{array}{c}
\tau^0=+\infty, \; K(x+y) \le \frac{\sqrt t}{\gamma}  \\
\sigma(x+y)\ge T + K(x+y)\gamma \sqrt t
\end{array}
\right) \\
& \le & \Pbarre_{\lambda}(N_t(x+y, \sqrt t)\circ\theta_{T} \ge 1) +\Pbarre_\lambda(\exists i<K(x+y): \; v_i(x+y)-u_i(x+y) >\sqrt t)
\end{eqnarray*}
Comme $N_t(x+y,\sqrt{t})=N_t(0,\sqrt{t})\circ T_x\circ T_y$,
on a, en se souvenant que $T=\sigma(x)+\sigma(y)\circ\tilde{\theta}_x$,
$$N_t(x+y,\sqrt t)\circ \theta_{T}=N_t(0,\sqrt{t})\circ\tilde{\theta}_y\circ\tilde{\theta}_x.$$
Ainsi,
$ \Pbarre_{\lambda}(N_t(x+y,\sqrt t)\circ \theta_T \ge 1)=
\le  \Pbarre_{(x+y).\lambda} (N_t(0,\sqrt t)\ge 1),$
et le lemme~\ref{NL} permet de contrôler ce terme. Finalement, le terme $\Pbarre_{\lambda}(\exists i<K(x+y): \; v_i(x+y)-u_i(x+y) > \sqrt t)$ est trait\'e l'aide du lemme~\ref{LEMtpsdevie}.
\finpreuve
\end{proof}

\begin{coro}
\label{momentsecart}
On pose $r(x,y)=(\sigma(x+y)-(\sigma(x)+\sigma(y)\circ\tilde{\theta}_x))^+.$

Pour tout $p\ge 1$, il existe une constante $M_p$ telle que
\begin{equation} \label{momecart}
\forall \lambda\in\Lambda\quad\forall x,y\in\Zd\quad  \Ebarre_{\lambda}[r(x,y)^p]\le M_p.
\end{equation}
\end{coro}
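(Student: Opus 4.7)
Le plan est d'obtenir ce corollaire par simple int\'egration des queues de distribution, directement \`a partir du th\'eor\`eme~\ref{presquesousadditif}. L'essentiel du travail technique ayant \'et\'e accompli dans la preuve de ce th\'eor\`eme, il n'y a ici aucun obstacle s\'erieux : il s'agit d'une v\'erification de routine.

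Plus pr\'ecis\'ement, je commencerais par remarquer que $r(x,y)\ge 0$ et que $r(x,y)$ co\"\i ncide avec $\sigma(x+y)-(\sigma(x)+\sigma(y)\circ\tilde{\theta}_x)$ d\`es que cette derni\`ere quantit\'e est positive. Le th\'eor\`eme~\ref{presquesousadditif} fournit alors, pour tout $t>0$,
$$\Pbarre_{\lambda}(r(x,y)\ge t)\le A_{\ref{epresquesousadditif}}\exp(-B_{\ref{epresquesousadditif}}\sqrt{t}),$$
borne uniforme en $x,y\in\Zd$ et $\lambda\in\Lambda$.

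Ensuite, j'appliquerais la formule classique
$$\Ebarre_{\lambda}[r(x,y)^p]=\int_0^{+\infty} p t^{p-1}\Pbarre_{\lambda}(r(x,y)>t)\ dt$$
et je majorerais l'int\'egrande par la borne pr\'ec\'edente. Le changement de variable $u=\sqrt{t}$ transforme l'int\'egrale en
$$2pA_{\ref{epresquesousadditif}}\int_0^{+\infty} u^{2p-1}\exp(-B_{\ref{epresquesousadditif}}u)\ du,$$
qui se ram\`ene \`a une fonction gamma, finie et ind\'ependante de $x,y$ et $\lambda$. Cette valeur convient pour $M_p$.

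Le seul point \`a souligner est que le caract\`ere sous-exponentiel en $\exp(-B\sqrt{t})$, bien que plus faible qu'une d\'ecroissance exponentielle pure, d\'ecro\^\i t encore plus vite que toute puissance, ce qui suffit amplement \`a assurer l'int\'egrabilit\'e de toutes les puissances enti\`eres de $r(x,y)$. L'uniformit\'e en $x,y,\lambda$ de l'estim\'ee du th\'eor\`eme~\ref{presquesousadditif} se transmet alors directement \`a la borne des moments, ce qui conclut.
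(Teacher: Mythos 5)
Your proof is correct and takes essentially the same route as the paper: integrate the tail bound from Th\'eor\`eme~\ref{presquesousadditif} against $p t^{p-1}\,dt$ and observe that $\exp(-B\sqrt t)$ is integrable against any polynomial weight. One small remark: the paper's own constant carries an extra factor $1/\rho$ (justified by an invocation of~(\ref{uniftau})), but since Th\'eor\`eme~\ref{presquesousadditif} is already stated under $\Pbarre_\lambda$ rather than $\P_\lambda$, that factor is superfluous; your version, which omits it, is in fact the cleaner one.
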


\begin{proof}
Comme $\Ebarre_{\lambda}[r(x,y)^p]=\int_0^{+\infty} pu^{p-1}\Pbarre_{\lambda}(r(x,y)>u)\ du$, d'après la 
proposition~\ref{presquesousadditif} et l'équation~(\ref{uniftau}), il suffit de prendre $$M_p=\frac{pA_{\ref{epresquesousadditif}}}{\rho}\int_0^{+\infty} u^{p-1}\exp(-B_{\ref{epresquesousadditif}}\sqrt u)\ du$$
pour terminer la preuve.
\finpreuve\end{proof}


\subsection{Contrôle de $\sigma(x)-t(x)$}


Ici encore, on voudrait appliquer un procédé analogue à partir de $(x,t(x))$ mais on n'a pas a priori de départ infini proche de ce point. Nous allons, pour trouver un tel point source, le chercher le long du chemin d'infection entre $(0,0)$ et $(x,t(x))$, ce qui n\'ecessitera des contr\^oles sur une boite spatio-temporelle de hauteur $t(x)$, c'est-\`a-dire de l'ordre de $\|x\|$. Nous allons donc perdre \`a la fois dans la pr\'ecision des estim\'ees et dans leur uniformit\'e en $\|x\|$.

\begin{prop} 
\label{importante}
Il existe des constantes $A_{\ref{eimportante}},B_{\ref{eimportante}},\alpha_{\ref{eimportante}}$ strictement positives telles que pour tout $z>0$, tout $x\in\Zd$, pour tout $ \lambda\in\Lambda$:
\begin{eqnarray}
&& \P_\lambda\left(\tau^0=+\infty,\sigma(x)\ge t(x)+K(x)(\alpha_{\theequation}\ln (1+\|x\|) +z)\right)  \nonumber \\
& \le & A_{\theequation}\exp(-B_{\theequation}z). \label{eimportante}
\end{eqnarray}
\end{prop}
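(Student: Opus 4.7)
Set $t = (\alpha \ln(1+\|x\|_\infty) + z)/\gamma$ with $\alpha > 0$ to be chosen, and denote the bad event by $\mathcal{B} = \{\tau^0 = +\infty,\ \sigma(x) \ge t(x) + K(x)\gamma t\}$. I split the analysis into a short-range regime $\|x\|_\infty \le Mt+2$ and a long-range regime $\|x\|_\infty > Mt+2$. In the short-range regime the origin lies in $x + B_{Mt+2}$, so applying Lemma~\ref{futfut2} with $T=0$ and source $u=0$ gives $\sigma(x) \le K(x)\gamma t \le t(x) + K(x)\gamma t$ on the intersection $\{\tau^0=+\infty\} \cap \{N_{K(x)\gamma t}(x,t)=0\} \cap \bigcap_{i<K(x)}\{v_i(x)-u_i(x)<t\}$; the complementary events are handled by combining Lemmas~\ref{NL} and~\ref{LEMtpsdevie} with the geometric tail of $K(x)$ from Lemma~\ref{Kgeom}.

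In the long-range regime Lemma~\ref{futfut2} is not directly useful because there is no obvious restart source close to $x$ at time $0$; instead I iterate Lemma~\ref{futfut}. The growth estimates~(\ref{richard}) and~(\ref{retouche}) show that $t \le t(x) \le T_{\max} := \|x\|_\infty / c + z$ outside an event of probability $\le A(e^{-Bz} + e^{-Bt})$. On this intersection, I take the (pathwise random) parameters $s := t(x)-t \ge 0$ and $L := K(x)\gamma t$, and verify by induction on $i \in \{1, \dots, K(x)\}$ that $u_i(x) \le t(x) + (i-1)\gamma t$: the base case is $u_1(x)=t(x)$, and the inductive step is provided by Lemma~\ref{futfut}, whose statement is pathwise and so permits the substitution of a random $s$. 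The only additional assumption needed is $N_L(x,t)\circ\theta_s = 0$; the auxiliary bound $v_i(x)-u_i(x) \le t/2$ comes for free on the same event through the $\{t/2 < \tau^y < +\infty\}$ term in $E^y(x,t)$.

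The main quantitative step is to bound $\Pbarre_\lambda(N_{K(x)\gamma t}(x,t)\circ\theta_{t(x)-t} \ge 1,\ t(x) \in [t,T_{\max}])$: this event entails the existence of a bad-growth atom in the space-time region $(x + B_{Mt+2}) \times [0, T_{\max} + K(x)\gamma t]$. Truncating at $K(x) \le C_1 z$ (with the tail $\{K(x) > C_1 z\}$ treated by Lemma~\ref{Kgeom}) and applying Lemma~\ref{NL} yields a bound of the form
$$C(1+z)^2 (1+\|x\|_\infty)\, e^{-Bt} = C(1+z)^2 (1+\|x\|_\infty)^{1 - B\alpha/\gamma}\, e^{-Bz/\gamma}.$$
Choosing $\alpha$ so that $B\alpha/\gamma > 1$ absorbs the polynomial factor in $\|x\|_\infty$, and the residual $(1+z)^2 e^{-Bz/\gamma}$ is swallowed by $A_{\ref{eimportante}} e^{-B_{\ref{eimportante}} z}$ after a mild reduction of the exponential rate.

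The main difficulty is precisely this substitution of the random shift $s=t(x)-t$ into the otherwise deterministic Lemma~\ref{futfut}, together with the need for a union bound on $N$-type events over a time interval of size $T_{\max} \sim \|x\|$. It is this union bound which forces the $\ln(1+\|x\|)$ correction in the conclusion: the logarithm is the factor that $e^{-Bt}$ must absorb in order to cancel the polynomial-in-$\|x\|$ loss inherited from the unavoidable scanning over the temporal box $[0, T_{\max}]$.
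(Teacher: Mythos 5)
Your proposal is correct, and in the long-range regime it takes a genuinely different — and leaner — route than the paper. The paper never makes your short/long split: it always picks $(u,T)$ to be the \emph{last} restart point on the infection path from $(0,0)$ to $(x,t(x))$, i.e.\ the last $(u,T)$ on that path with $\tau^u\circ\theta_T=+\infty$, and then uses the auxiliary quantity $\tilde N_L(x,t)$ — which counts, in a space-time box around $(x,0)$, dead subclusters whose span nevertheless exceeds $B_{Mt}$ — to show that $u\in x+B_{Mt+2}$; only then is Lemma~\ref{futfut2} applied around $x$ with that $(u,T)$ as source. You instead observe that in the regime $\|x\|_\infty>Mt+2$ no pre-identified long-lived source is needed at all: starting from $s=t(x)-t$ (made $\ge 0$ by the growth bound~(\ref{richard})), Lemma~\ref{futfut} can be iterated directly from $u_1(x)=t(x)$, because its proof internally locates, on the path during $[u_i(x)-t,u_i(x)]$, a nearby surviving ancestor. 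This removes $\tilde N$ and the ``last restart'' construction entirely and yields the slightly stronger conclusion $\sigma(x)\le t(x)+(K(x)-1)\gamma t$. You also correctly identify that the $\ln(1+\|x\|)$ correction has the same origin in both approaches (the union bound over a time window of height $\approx\|x\|$), and that the $v_i-u_i$ control is absorbed into the $N$-event in your long-range branch (it is a byproduct of non-realization of the $\{t/2<\tau^y<\infty\}$ alternative in $E^y$), though you still invoke Lemma~\ref{LEMtpsdevie} in the short-range branch, as does the paper. Two quantitative remarks. First, the paper handles the random $K(x)$ in the box length via Cauchy–Schwarz against the geometric tail of $K(x)$, whereas you truncate at $K(x)\le C_1 z$; both work. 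Second — and this is the one place where your write-up is slightly looser than the paper's — your passage from $\{N_{K(x)\gamma t}(x,t)\circ\theta_{t(x)-t}\ge 1,\ t(x)\le T_{\max}\}$ to ``a bad-growth \emph{atom} in the box $(x+B_{Mt+2})\times[0,T_{\max}+K(x)\gamma t]$'' glosses over the $\delta_0$ term in the definition of $N_L$: $t(x)-t$ is not an atom of any of the underlying Poisson processes, so the check $\1_{E^y(x,t)}\circ\theta_{t(x)-t}$ injected by $\delta_0$ is not \emph{literally} counted by $N_{T_{\max}+K(x)\gamma t}(x,t)$. The paper's analogous step $N_{K(x)\gamma t}(x,t)\circ\theta_T\le N_{t(x)+K(x)\gamma t}(x,t)$ is grounded at a restart time $T$ which \emph{is} an atom at the source site $u$, which is exactly the point where the $\delta_0$ check is actually used in Lemma~\ref{futfut2}'s bootstrap; your shift has no such anchoring, so the domination needs a couple of extra lines (e.g.\ replacing the $\delta_0$-check at $t(x)-t$ by the check at the last Poisson atom at or before $t(x)-t$ at the relevant site, which is covered by the fixed $N$). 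This is a fixable technicality, not a wrong idea, but you should be explicit about it since it is the one step where your argument does not reduce mechanically to the paper's lemmas.
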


\begin{proof} Pour $x,y\in \Zd$ et $t,L>0$, on note 
\begin{eqnarray*}
\tilde{E}^y(t) & = &\{\tau_y<+\infty, \;\miniop{}{\cup}{s\ge 0}H^x_s\not\subset B_{Mt}\}, \\
\tilde N_L(x,t)  & =  & \sum_{y\in x+B_{Mt+1}}\int_0^L \1_{\tilde E^y(Mt+1)}\circ \theta_s \ d \left(\sum_{e\in\Ed:y\in e}\omega_e \right)(s).
\end{eqnarray*}
A l'aide des estim\'ees~(\ref{uniftau}), (\ref{richard}) et (\ref{grosamasfinis}), on voit facilement qu'il existe des constantes $A,B>0$ telles que
\begin{equation}
\forall \lambda\in\Lambda\quad \forall x \in \Zd \quad \forall t>0 \quad \Pbarre_{\lambda}(\tilde N_L(x,t)\ge 1)\le A (1+L)\exp(-B t).
\label{hauteur}
\end{equation}
Maintenant, on choisit $(u,T)$ sur le chemin d'infection entre $(0,0)$ et $(x,t(x))$, de sorte que
$\tau^u \circ \theta_T=+\infty$ et que $T$ soit le dernier instant satisfaisant ces propri\'et\'es (comme on se place sur l'\'ev\'enement $\{\tau^0=+\infty\}$, un tel $T$ existe toujours).

Montrons maintenant que si $\tilde N_{t(x)}(x,t)=0$, alors $u\in x+B_{Mt+2}$.
En effet, si $\|u-x\|> Mt +2$, considérons le premier point $(u',T')$ du chemin d'infection (apr\`es $(u,T)$) qui est
dans $x+B_{Mt}$: par d\'efinition de $T$, la descendance de $(u',T')$ est finie, mais, pour contenir $x$, de diamètre supérieur ou égal à $Mt$, ce qui implique $\tilde N_{t(x)}(x,t)\ge 1$, et donne l'implication souhait\'ee. 

Sur l'\'ev\'enement $\{\tilde N_{t(x)}(x,t)=0\}$, on pourra donc appliquer le lemme~\ref{futfut2} autour du point $x$, avec une \'echelle 
$$t=\frac{\alpha \log(1+\|x\|)+z}{\gamma}\ge \frac{z}{\gamma},$$
le point $(u,T)$ comme point-source et une fen\^etre temporelle de hauteur $L=K(x)\gamma t$. Ici, $\alpha>0$ est une constante suffisamment grande que l'on choisira par la suite. Comme $T\le t(x)$,
\begin{eqnarray}
&& \Pbarre_\lambda\left(\sigma(x)\ge t(x)+K(x)(\alpha\ln (1+\|x\|) +z)\right)
 =  \Pbarre_\lambda \left(\sigma(x)\ge t(x)+K(x)\gamma t \right)  \nonumber \\
& \le & \Pbarre_\lambda \left(\sigma(x)\ge T+K(x)\gamma t \right) \nonumber \\
& \le & \Pbarre_\lambda \left(\sigma(x)\ge T+K(x)\gamma t, \; \tilde N_{t(x)}(x,t)=0 \right) +\Pbarre_\lambda \left(\tilde N_{t(x)}(x,t)\ge1 \right) \nonumber \\
& \le & \Pbarre_{\lambda}(N_{K(x)\gamma t}(x, t)\circ\theta_{T} \ge 1) +\Pbarre_\lambda(\exists i<K(x): \; v_i(x)-u_i(x) >t) \nonumber \\
& & \quad + \Pbarre_\lambda (\tilde N_{t(x)}(x,t)\ge1 ). \label{MAJhic}
\end{eqnarray}
Le second terme de (\ref{MAJhic}) est major\'e par le lemme~\ref{LEMtpsdevie}. Contr\^olons le dernier terme de~(\ref{MAJhic}):
\begin{eqnarray*}
 \Pbarre_\lambda \left(\tilde N_{t(x)}(x,t)\ge1 \right) & \le &  \Pbarre_\lambda \left(\tilde N_{\frac{\|x\|}c+z}(x,t)\ge1  \right) +\Pbarre_\lambda \left( t(x)> \frac{\|x\|}c+z \right).
\end{eqnarray*}
L'estim\'ee~(\ref{retouche}) permet de traiter le second terme, et~(\ref{hauteur}) assure que
\begin{eqnarray*}
\Pbarre_\lambda \left(\tilde N_{\frac{\|x\|}c+t}(x,t)\ge1  \right)
& \le & A \left( 1+\frac{\|x\|}c+z \right)\exp(-B t) \\
& \le & A \left( 1+\frac{\|x\|}c+z \right)\exp \left(-\frac{B(\alpha \log(1+\|x\|)+z)}{\gamma} \right) \le  A' \exp(-B'z),
\end{eqnarray*}
d\`es que $\alpha$ est choisie suffisamment grande.

Pour traiter le premier terme de (\ref{MAJhic}), on remarque que
$N_{K(x)\gamma t}(x,t)\circ\theta_{T}\le N_{t(x)+K(x)\gamma t}(x,t).$
Ainsi
\begin{eqnarray*}
& &\Pbarre_{\lambda}(N_{K(x)\gamma t}(x,t)\circ\theta_{T}\ge 1) \\
&\le  & \Pbarre_{\lambda} \left(N_{\frac{\|x\|}c+z+K(x)\gamma t}(x,t)\ge 1\right)+\Pbarre_{\lambda}\left(t(x)\ge\frac{\|x\|}c+z\right).
\end{eqnarray*}
Comme pr\'ec\'edemment, le second terme est contr\^ol\'e \`a l'aide de~(\ref{retouche}), tandis que, en d\'ecoupant suivant les valeurs prises par $K(x)$, en utilisant~(\ref{eNL}), on obtient:
\begin{eqnarray*}
&& \Pbarre_{\lambda} \left(N_{\frac{\|x\|}c+z+K(x)\gamma t}(x,t)\ge 1\right) \\
&\le  & \sum_{k=1}^{+\infty}\sqrt{\Pbarre_{\lambda}(K(x)=k)}\sqrt{\Pbarre_{\lambda}(N_{k\gamma t+\frac{\|x\|}c+z}(x,t)\ge 1)}\\
&\le  & \sum_{k=1}^{+\infty}\sqrt{\Pbarre_{\lambda}(K(x)=k)}\sqrt{A_{\ref{eNL}}(k\gamma t+\frac{\|x\|}c+z)\exp(-B_{\ref{eNL}}t)}\\
& \le & \sqrt { A_{\ref{eNL}}(1+\frac{\|x\|}c)(1+z)(1+\gamma t) }\exp(-\frac{B_{\ref{eNL}}}2 t)\sum_{k=1}^{+\infty}\sqrt{(1+k)\Pbarre_{\lambda}(K(x)=k)}.
\end{eqnarray*}
Le contr\^ole sous-g\'eom\'etrique~(\ref{Kgeom})de la queue de distribution de $K(x)$ assure que la s\'erie est finie, et quitte \`a augmenter $\alpha$ si besoin, on peut trouver $A'',B''>0$ telles que le pr\'efacteur soit major\'e par $A'' \exp(-B''z)$, ce qui termine la preuve.
\finpreuve
\end{proof}

\begin{lemme}
Pour tout $p \ge 1$, il existe une constante $C_{\ref{momv}}(p)$ telle que, pour tout $x \in \Zd$, 
\begin{equation}
\forall \lambda\in\Lambda\quad\Ebarre_{\lambda} (|\sigma(x)-t(x)|^p)  \le  C_{\theequation}(p) (\ln(1+\|x\|))^{p}. \label{momv}
\end{equation}
\end{lemme}
\begin{proof}
Posons $V_x=\frac{\sigma(x)-t(x)}{K(x)}-\alpha\ln(1+\|x\|)$.
D'après la proposition~\ref{importante}, on peut trouver une variable aléatoire 
$W$ avec des moments exponentiels telle que $W$ domine stochastiquement
la loi de $V_x$ sous $\P_{\lambda}$ pour tout $x$ et pour tout $\lambda$.
De même, d'après le lemme~\ref{Kgeom}, la variable aléatoire $K(x)$ est stochastiquement dominée par une variable aléatoire $K'$ géométrique de paramètre~$\rho$.

Posons $v(x)=\sigma(x)-t(x)=K(x)- (\alpha\ln(1+\|x\|)+V_x)$. Soit $p\ge 1$.
D'après l'inégalité de Minkowski,
\begin{eqnarray*}
(\Ebarre_{\lambda} v(x)^p)^{1/p}& \le & \alpha\ln(1+\|x\|)(\Ebarre_{\lambda} (K(x)^p))^{1/p}+\left(\Ebarre_{\lambda} [ K(x)^p V_x^p ]\right)^{1/p}\\
& \le & \alpha\ln(1+\|x\|)(\Ebarre_{\lambda} [K(x)^p])^{1/p}+\left(\Ebarre_{\lambda}[ K(x)^{2p}] \Ebarre_{\lambda}V_x^{2p}\right)^{\frac1{2p}}\\
& \le & \alpha\ln(1+\|x\|)(\E (K'^p))^{1/p}+\left(\E (K'^{2p}) \E [W^{2p}]\right)^{\frac1{2p}},
\end{eqnarray*}
ce qui termine la preuve. 
\finpreuve\end{proof}

On montre ensuite que la différence entre $\sigma(x)$ et $t(x)$  est asymptotiquement négligeable devant $\|x\|$:

\begin{coro} 
\label{pareil}
$\Pbarre$ presque sûrement, $\displaystyle 
\lim_{\|x\|\to +\infty} \frac{|\sigma(x)-t(x)|}{\|x\|}=0$.
\end{coro}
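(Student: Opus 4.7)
Le plan est d'exploiter le contr\^ole polynomial des moments de $\sigma(x)-t(x)$ fourni par~(\ref{momv}) et de le combiner \`a une application classique de l'in\'egalit\'e de Markov et du lemme de Borel-Cantelli.

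Concr\`etement, on commencerait par int\'egrer l'in\'egalit\'e~(\ref{momv}) par rapport \`a $\nu$, pour obtenir la m\^eme borne sous la probabilit\'e moyenn\'ee: $\Ebarre(|\sigma(x)-t(x)|^p)\le C_{\ref{momv}}(p)(\ln(1+\|x\|))^p$. Pour $\epsilon>0$ et $x\in\Zd\setminus\{0\}$, l'in\'egalit\'e de Markov fournit alors
$$\Pbarre(|\sigma(x)-t(x)|>\epsilon\|x\|)\le \frac{C_{\ref{momv}}(p)(\ln(1+\|x\|))^p}{\epsilon^p\|x\|^p}.$$

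Le nombre de points $x\in\Zd$ v\'erifiant $\|x\|=n$ \'etant d'ordre $n^{d-1}$, la s\'erie obtenue en sommant ce majorant sur $x$ se comporte comme $\sum_{n\ge 1} n^{d-1-p}(\ln n)^p$, qui converge d\`es que $p>d$. En choisissant un tel $p$, le lemme de Borel-Cantelli assure que, $\Pbarre$ presque s\^urement, seul un nombre fini de points $x\in\Zd$ satisfont $|\sigma(x)-t(x)|>\epsilon\|x\|$. On en d\'eduit $\limsup_{\|x\|\to+\infty}|\sigma(x)-t(x)|/\|x\|\le \epsilon$ presque s\^urement, et en prenant une suite $(\epsilon_k)$ tendant vers $0$, on obtient la convergence vers $0$ annonc\'ee.

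L'essentiel du travail ayant d\'ej\`a \'et\'e accompli dans le contr\^ole des moments~(\ref{momv}) --- lui-m\^eme cons\'equence non triviale de la proposition~\ref{importante} --- cette derni\`ere d\'eduction ne pr\'esente aucune difficult\'e substantielle, et aucun obstacle s\'erieux n'est \`a pr\'evoir.
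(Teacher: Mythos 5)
Votre d\'emonstration est correcte et repose sur la m\^eme id\'ee que celle du papier : exploiter le contr\^ole des moments~(\ref{momv}) avec un exposant $p>d$ et conclure par sommabilit\'e sur $\Zd$. La seule diff\'erence est cosm\'etique : le papier observe directement que $\sum_{x\in\Zd}\Ebarre\,|\sigma(x)-t(x)|^p/(1+\|x\|)^p<+\infty$, d'o\`u l'appartenance presque s\^ure de la suite $\bigl(|\sigma(x)-t(x)|/(1+\|x\|)\bigr)_x$ \`a $\ell^p(\Zd)$ et donc sa convergence vers $0$, l\`a o\`u vous passez par l'in\'egalit\'e de Markov, Borel-Cantelli et une suite $\epsilon_k\downarrow 0$ --- c'est strictement \'equivalent, juste un peu plus long.
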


\begin{proof} 
Soit $p> d$: d'après l'équation~(\ref{momv}), on a
\begin{eqnarray*}
\sum_{x\in\Zd} \Ebarre \frac{|\sigma(x)-t(x)|^p}{(1+\|x\|)^p} & \le & C_{\ref{momv}}(p)  \sum_{x\in\Zd} \frac{(\ln(1+\|x\|))^{p}}{(1+\|x\|)^p}<+\infty.
\end{eqnarray*}
Ainsi, presque sûrement, $(\frac{|\sigma(x)-t(x)|}{(1+\|x\|)})_{x \in \Zd}$ est dans $\ell^p(\Zd)$, donc en particulier tend vers zéro.
\finpreuve \end{proof}

\begin{coro}
\label{sigmaa}
Il existe des constantes positives $A_{\ref{esigmaa}},B_{\ref{esigmaa}},C_{\ref{esigmaa}}$ telles que pour tout $\lambda \in \Lambda$,
pour tout $x\in\Zd$, 
\begin{equation}
\forall t>0 \quad \Pbarre_\lambda\left(\sigma(x)\ge C_{\theequation}\|x\|+t\right) \le A_{\theequation}\exp(-B_{\theequation}\sqrt{t}).
\label{esigmaa}
\end{equation}
\end{coro}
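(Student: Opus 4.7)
Le plan est de d\'ecomposer $\sigma(x)=t(x)+(\sigma(x)-t(x))$ et de contr\^oler chacun des deux termes s\'epar\'ement. Le premier se majore directement par l'estim\'ee~(\ref{retouche}): pour tout $s>0$, $\Pbarre_\lambda(t(x)\ge \|x\|/c+s)\le (A/\rho)\exp(-Bs)$. Pour le second, on combine la proposition~\ref{importante} et la queue sous-g\'eom\'etrique de $K(x)$ fournie par le lemme~\ref{Kgeom}: pour tout $N\ge 1$ et tout $z>0$, sur l'intersection des bons \'ev\'enements $\{K(x)\le N\}$ et $\{\sigma(x)-t(x)\le K(x)(\alpha\ln(1+\|x\|)+z)\}$, o\`u $\alpha=\alpha_{\ref{eimportante}}$, on aura $\sigma(x)-t(x)\le N(\alpha\ln(1+\|x\|)+z)$. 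Les $\Pbarre_\lambda$-probabilit\'es des trois mauvais \'ev\'enements sont respectivement major\'ees par $A\exp(-Bs)$, $A(1-\rho)^N$ et $A\exp(-Bz)$.

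L'obstacle principal sera l'apparition du terme logarithmique $N\alpha\ln(1+\|x\|)$, qu'il faut transformer en une contribution lin\'eaire en $\|x\|$ et en~$t$. Pour cela, je combinerai l'in\'egalit\'e \'el\'ementaire $\ln(1+y)\le 2\sqrt{1+y}$ avec l'in\'egalit\'e AM-GM:
$$N\alpha\ln(1+\|x\|)\le 2N\alpha\sqrt{1+\|x\|}\le (N\alpha)^2+(1+\|x\|).$$
En cumulant les trois majorations ci-dessus, on aboutit, sur l'intersection des trois bons \'ev\'enements, \`a
$$\sigma(x)\le (1/c+1)\|x\|+1+s+Nz+(N\alpha)^2.$$

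Il restera alors \`a calibrer les trois param\`etres en fonction de $t$. On choisira $s$ de l'ordre de $t$, $N$ de l'ordre de $\sqrt{t}$ (pour assurer $(N\alpha)^2=O(t)$), puis $z$ de sorte que $Nz$ soit lui-m\^eme de l'ordre de $t$. Avec ces choix, la borne sup\'erieure prend la forme $C\|x\|+O(t)$ tandis que chacune des trois probabilit\'es compl\'ementaires d\'ecro\^it comme $\exp(-B'\sqrt{t})$. Un dernier r\'eajustement des constantes $A_{\ref{esigmaa}},B_{\ref{esigmaa}},C_{\ref{esigmaa}}$ permettra d'absorber les termes additifs constants et de rendre l'in\'egalit\'e triviale pour les petites valeurs de $t$, et fournit la forme voulue~(\ref{esigmaa}).
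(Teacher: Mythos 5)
Ta d\'emarche est correcte et repose sur exactement les m\^emes ingr\'edients que la preuve du papier : l'estim\'ee~(\ref{retouche}) pour $t(x)$, la queue sous-g\'eom\'etrique de $K(x)$ (lemme~\ref{Kgeom}) et la proposition~\ref{importante} pour $\sigma(x)-t(x)$, suivis d'une union bound et d'un calibrage des param\`etres en fonction de $t$. La seule diff\'erence est cosm\'etique : le papier choisit directement $z=\alpha\sqrt{\|x\|+t/2}$ et le seuil $K(x)\le\frac1{2\alpha}\sqrt{\|x\|+t/2}$, ce qui rend l'in\'egalit\'e $K(x)\bigl(\alpha\ln(1+\|x\|)+z\bigr)\le\|x\|+t/2$ imm\'ediate via $\ln(1+u)\le\sqrt u$; toi tu prends des param\`etres $N$ et $z$ ne d\'ependant que de $t$ et tu absorbes le terme logarithmique par AM-GM, $2N\alpha\sqrt{1+\|x\|}\le(N\alpha)^2+1+\|x\|$. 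Les deux calibrages produisent la m\^eme d\'ecroissance en $\exp(-B\sqrt t)$ et la m\^eme constante lin\'eaire $C=1/c+1$.
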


\begin{proof}
On considère la constante $\alpha=\alpha_{\ref{eimportante}}$ donnée dans la proposition~\ref{importante}, et on remarque que si
$K(x)\le\frac1{2\alpha} \sqrt{\|x\|+t/2}$ et $z=\alpha \sqrt{\|x\|+t/2}$,
 alors, en utilisant que $\ln(1+u)\le \sqrt u$, on obtient que
$$K(x)[\alpha \ln(1+\|x\|)+z]\le 2z K(x)\le \|x\|+t/2.$$
Ainsi, avec les estimées~(\ref{retouche}) et~(\ref{uniftau}),
\begin{eqnarray*}
&& \Pbarre_\lambda\left(\sigma(x)> \left(\frac1{c}+1\right)\|x\|+t\right) \\
&\le  &\frac1\rho\P_\lambda\left(\tau^0=+\infty, \; \sigma(x)> \left(\frac1{c}+1\right)\|x\|+t\right)\\
& \le & \frac1\rho \P_\lambda \left( \tau^0=+\infty, \; t(x)\ge\frac{\|x\|}{c}+t/2 \right) + \frac1\rho \P_\lambda \left( K(x)>\frac1{2\alpha} \sqrt{\|x\|+t/2}\right)\\
& & \quad \quad \quad + \frac1\rho\P_\lambda\left(\sigma(x)> t(x)+K(x)(\alpha\ln (1+\|x\|) +\alpha \sqrt{\|x\|+t/2})\right).
\end{eqnarray*}
Le premier terme est contrôlé par l'inégalité~(\ref{retouche}). Le deuxième terme est contrôlé par la queue sous-exponentielle de $K(x)$ du lemme~\ref{Kgeom}, et le dernier par la proposition~\ref{importante}.
\finpreuve\end{proof}

On peut alors donner les conditions d'intégrabilité pour $\sigma$ évoquées dans l'introduction de cette section.

\begin{coro}
\label{propmoments}
Pour tout $p\ge 1$, il existe une constante $C_{\ref{moms}}(p)$ telle que 
\begin{equation}
\label{moms}
\forall\lambda\in\Lambda\quad \forall x\in\Zd\quad \Ebarre_{\lambda}[\sigma(x)^p]  \le  C_{\theequation}(p) (1+\|x\|)^{p}.
\end{equation}
\end{coro}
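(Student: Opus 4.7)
The plan is to apply the layer-cake formula
\begin{equation*}
\Ebarre_\lambda[\sigma(x)^p] = \int_0^{+\infty} p u^{p-1} \Pbarre_\lambda(\sigma(x)>u)\ du
\end{equation*}
and feed into it the tail estimate already provided by le corollaire~\ref{sigmaa}. All the genuinely delicate work --- control of the defect of subadditivity, control of $\sigma(x)-t(x)$, and the resulting stretched-exponential bound beyond the threshold $C_{\ref{esigmaa}}\|x\|$, uniform in $\lambda\in\Lambda$ --- has already been carried out. What remains here is essentially a bookkeeping computation.

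I would split the integral at the natural threshold $u_0 = C_{\ref{esigmaa}}\|x\|$. On $[0,u_0]$, the trivial bound $\Pbarre_\lambda(\sigma(x)>u)\le 1$ yields a contribution equal to $u_0^p = C_{\ref{esigmaa}}^p\|x\|^p$, which is already of the required form. On $[u_0,+\infty)$, the change of variable $u=u_0+t$ combined with~(\ref{esigmaa}) bounds the tail piece by
\begin{equation*}
A_{\ref{esigmaa}}\int_0^{+\infty} p(u_0+t)^{p-1}\exp(-B_{\ref{esigmaa}}\sqrt{t})\ dt.
\end{equation*}
Using $(u_0+t)^{p-1}\le 2^{p-1}(u_0^{p-1}+t^{p-1})$ separates the $\|x\|$-dependence from a single-variable integral in $t$. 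The stretched-exponential decay is more than sufficient to guarantee that $\int_0^{+\infty} t^q\exp(-B_{\ref{esigmaa}}\sqrt{t})\ dt$ is finite for every $q\ge 0$, so both resulting pieces contribute a constant depending only on $p$, multiplied either by $u_0^{p-1}$ or by~$1$.

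Collecting everything, one obtains a bound of the form $C_1\|x\|^p + C_2\|x\|^{p-1} + C_3$ with constants depending only on $p$, which one absorbs into $C_{\ref{moms}}(p)(1+\|x\|)^p$ by choosing $C_{\ref{moms}}(p)$ large enough. The uniformity in $\lambda$ is inherited automatically from the uniformity already built into le corollaire~\ref{sigmaa}. I do not anticipate any real obstacle at this last step; the genuine difficulty lay upstream, in proving~(\ref{esigmaa}) via the restart argument of the preceding subsection.
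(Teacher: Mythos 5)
Your proof is correct and rests on the same key input, namely le corollaire~\ref{sigmaa} (the uniform stretched-exponential tail for $\sigma(x)-C_{\ref{esigmaa}}\|x\|$). The route differs slightly from the paper's: where you apply the layer-cake formula to $\sigma(x)^p$ directly, split the domain of integration at $u_0=C_{\ref{esigmaa}}\|x\|$ and then use the elementary inequality $(u_0+t)^{p-1}\le 2^{p-1}(u_0^{p-1}+t^{p-1})$, the paper instead applies Minkowski's inequality in $L^p(\Pbarre_\lambda)$ to write
\begin{equation*}
(\Ebarre_{\lambda}[\sigma(x)^p])^{1/p}\le C_{\ref{esigmaa}}\|x\|+\bigl(\Ebarre_{\lambda}\bigl[((\sigma(x)-C_{\ref{esigmaa}}\|x\|)^+)^p\bigr]\bigr)^{1/p}
\end{equation*}
and then layer-cakes only the centred term, which immediately reduces to the single integral $\int_0^{\infty} u^{p-1}\exp(-B_{\ref{esigmaa}}\sqrt{u})\,du$. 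The two arguments are of equal depth; the paper's Minkowski split is marginally tidier in that it avoids the domain split and the $2^{p-1}$ bound. One small remark on your version: the inequality $(u_0+t)^{p-1}\le 2^{p-1}(u_0^{p-1}+t^{p-1})$ indeed holds for all $p\ge 1$ (by concavity when $1\le p\le 2$, by convexity when $p\ge 2$), so there is no gap, but it is worth noting you implicitly need this in both regimes.
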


\begin{proof}
Avec l'inégalité de Minkowski, on a 
$$(\Ebarre_{\lambda}[\sigma(x)^p])^{1/p}\le C_{\ref{esigmaa}}\|x\|+(\Ebarre_{\lambda}[((\sigma(x)-C_{\ref{esigmaa}}\|x\|)^+)^p])^{1/p}.$$
Cependant 
\begin{eqnarray*}
\Ebarre_{\lambda}[((\sigma(x)-C_{\ref{esigmaa}}\|x\|)^+)^p]&=& 
\int_0^{+\infty} pu^{p-1}\Pbarre_{\lambda}(\sigma(x)-C_{\ref{esigmaa}}\|x\|>u)\ du\\ & \le& pA_{\ref{esigmaa}}\int_0^{+\infty} u^{p-1}\exp(-B_{\ref{esigmaa}}\sqrt u)\ du,
\end{eqnarray*}
d'après le corollaire~\ref{sigmaa}, ce qui achève la preuve.
\finpreuve\end{proof}

\noindent
\textbf{Remarque.}
Dans les arguments de red\'emarrage classique, on cherche \`a obtenir l'existence de moments exponentiels pour une variable al\'eatoire en se basant sur le lemme suivant: si les $(X_n)_{n \in \N}$ sont des variables al\'eatoires ind\'ependantes identiquement distribu\'ees avec des moments exponentiels, si $K$ est ind\'ependant des $(X_n)_{n \in \N}$ et admet lui-aussi des moments exponentiels, alors $\sum_{n=0}^K X_n$ admet des moments exponentiels. Ici, la difficult\'e \`a contr\^oler pr\'ecis\'ement les temps de r\'einfection du type $u_{i+1}-v_i$ nous emp\^eche d'int\'egrer ce cadre: on doit donc utiliser des arguments \emph{ad hoc} donnant des contr\^oles plus faibles.

\section{Théorèmes de forme asymptotique}
\label{forme}
On peut maintenant passer à la preuve du théorème~\ref{thFA}.
Le premier pas consiste à prouver  l'existence de limites pour les quantités
$\frac{\sigma(nx)}n$.

Grâce au corollaire~\ref{momentsecart}, on a quels que soient les entiers $n$ et $p$:
$$\Ebarre[\sigma((n+p)x)]\le \Ebarre[\sigma(nx)]+\Ebarre[\sigma(px)]+M_1.$$
Alors, d'après le lemme de Fekete, $\frac1{n}\Ebarre[\sigma(nx)]$ admet une limite finie lorsque $n$ tend vers l'infini.
Ainsi, un bon candidat pour être la limite est la quantité
 $$\displaystyle \mu(x)=\lim_{n\to +\infty} \frac{{\Ebarre} (\sigma(nx))}{n}.$$

\begin{theorem}
\label{cvdir}
Pour tout $x \in \Zd$, $\Pbarre$ presque sûrement:
$$\lim_{n \to +\infty} \frac{\sigma(nx)}{n}= \lim_{n \to +\infty} \frac{\Ebarre \sigma(nx)}{n}=\mu(x).$$
Cette convergence a de plus lieu dans tous les $L^p(\Pbarre)$, $p\ge 1$.
\end{theorem}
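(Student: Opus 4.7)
Le plan est d'appliquer le th\'eor\`eme ergodique presque sous-additif de l'appendice \`a la suite $f_n = \sigma(nx)$, en exploitant la transformation ergodique $\tilde\theta_x$ (th\'eor\`eme~\ref{systemeergodique}). L'existence et la finitude de $\mu(x) = \lim_{n \to +\infty} \Ebarre[\sigma(nx)]/n$ d\'ecoulent d\'ej\`a du lemme de Fekete appliqu\'e \`a la suite $(\Ebarre[\sigma(nx)] + M_1)$, qui est sous-additive gr\^ace au corollaire~\ref{momentsecart}.

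Pour la convergence presque s\^ure, j'introduirais la famille doublement index\'ee $X_{m,n} = \sigma((n-m)x) \circ \tilde\theta_{mx}$ pour $0 \le m < n$, avec en particulier $X_{0,n} = \sigma(nx)$. Le th\'eor\`eme~\ref{presquesousadditif} fournit la presque sous-additivit\'e
$$X_{0,n} \le X_{0,m} + X_{m,n} + r(mx, (n-m)x),$$
avec $\Ebarre[r(mx, (n-m)x)^p] \le M_p$ uniform\'ement en $m$ et $n$ par le corollaire~\ref{momentsecart}. Par le lemme~\ref{magic}, la loi de $X_{m,n}$ sous $\Pbarre_\lambda$ co\"incide avec celle de $X_{0,n-m}$ sous $\Pbarre_{mx.\lambda}$; conjugu\'e \`a la stationnarit\'e de $\nu$ sous l'action de $\Zd$ et \`a l'ergodicit\'e de $\tilde\theta_x$ sous $\Pbarre$, ceci fournit la stationnarit\'e ergodique requise. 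L'int\'egrabilit\'e $\Ebarre[|X_{0,n}|] \le C(1+\|nx\|) = O(n)$ r\'esulte du corollaire~\ref{propmoments}. Le th\'eor\`eme ergodique presque sous-additif de l'appendice donne alors $\sigma(nx)/n \to \mu(x)$ $\Pbarre$-presque s\^urement et dans $L^1$.

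L'obstacle principal sera de disposer d'un th\'eor\`eme ergodique presque sous-additif suffisamment souple pour notre situation: la sous-additivit\'e du th\'eor\`eme~\ref{presquesousadditif} fait intervenir la transformation $\tilde\theta_{mx}$ (temps de red\'emarrage associ\'e \`a $mx$), alors que l'ergodicit\'e naturelle est port\'ee par les it\'er\'ees $\tilde\theta_x^m$; ces deux transformations co\"incident en loi sous $\Pbarre$ mais pas trajectoriellement, ce qui emp\^eche d'invoquer directement une version classique \`a la Kingman ou \`a la Liggett et justifie la r\'edaction d'un th\'eor\`eme g\'en\'eral en appendice. Enfin, pour la convergence dans $L^p$ pour tout $p \ge 1$, il suffit d'observer que le corollaire~\ref{propmoments} fournit $\Ebarre[(\sigma(nx)/n)^q] \le C_q((1+\|nx\|)/n)^q$, born\'e uniform\'ement en $n$ pour tout $q \ge 1$; la famille $((\sigma(nx)/n)^p)_n$ est donc uniform\'ement int\'egrable, et la convergence presque s\^ure entra\^ine la convergence dans tous les $L^p$.
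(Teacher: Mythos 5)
Your overall plan is the paper's: apply the appendix's almost-subadditive ergodic theorems with $f_n=\sigma(nx)$, $\theta_n=\tilde\theta_{nx}$, $r_{n,k}=r(nx,kx)$, getting integrability from Corollaire~\ref{propmoments} and the remainder bounds from Corollaire~\ref{momentsecart}, then using the ergodicity of $\tilde\theta_x$ (Th\'eor\`eme~\ref{systemeergodique}) to make the $\tilde\theta_x$-invariant limit constant, and finally boosting to $L^p$ by uniform integrability. Your diagnosis that the subadditivity is phrased with $\tilde\theta_{mx}$ while the ergodicity lives on the iterates $(\tilde\theta_x)^m$ — two transformations equal in law under $\Pbarre$ but not pathwise — is exactly the reason the authors wrote a bespoke appendix theorem rather than quoting Kingman or Liggett directly.

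However, there is a concrete gap: you stop at ``le th\'eor\`eme de l'appendice donne alors $\sigma(nx)/n\to\mu(x)$ p.s.'' without checking the additional hypothesis of Th\'eor\`eme~\ref{thergdeux}, namely
$$\frac1{n}\Bigl(f_{nk}-\sum_{i=0}^{n-1}f_k\circ(\theta_k)^i\Bigr)^+\;\longrightarrow\;0\quad\Pbarre\text{-p.s.}$$
This hypothesis is what controls the $\limsup$; Th\'eor\`eme~\ref{therg} alone only yields $\Ebarre[\liminf f_n/n]\ge\mu$ and the $\theta_n$-invariance of $\liminf$, with no information on $\limsup$. Verifying it is not formal here: the paper observes that concatenating infection paths through the essentially-infected sites $kx,2kx,\dots$ gives $t(nkx)\le\sum_{i=0}^{n-1}\sigma(kx)\circ(\tilde\theta_{kx})^i$, hence the positive part in question is at most $\sigma(nkx)-t(nkx)$, and then invokes the Corollaire~\ref{pareil} estimate $\sigma(x)-t(x)=o(\|x\|)$ p.s. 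This is precisely where the construction of $\sigma$ as an ``essential'' hitting time and its subsequent comparison with $t$ pay off, and it is the step your write-up is missing. Without it, nothing you have cited rules out $\limsup_n\sigma(nx)/n>\mu(x)$ on a set of positive measure.
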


Pour cela, on va avoir besoin d'établir deux théorèmes ergodiques presque sous-additifs:
\begin{theorem}
\label{therg}
Soit $(\Omega,\mathcal{F},\P)$ un espace probabilisé, $(\theta_n)_{n\ge 1}$ une famille de transformations laissant $\P$ invariante. Sur cet espace sont définies une famille  $(f_n)_{n\ge 1}$   de fonctions intégrables, une famille  $(g_n)_{n\ge 1}$   de fonctions positives ou nulles, et une famille $(r_{n,p})_{n,p\ge 1}$ telles que
\begin{equation}
\label{maiscestimportant}
\forall n,p\ge 1\quad f_{n+p}\le f_{n}+f_{p}\circ\theta_n+g_p\circ\theta_n+r_{n,p}.
\end{equation}
On fait les hypothèses suivantes:
\begin{itemize} 
\item  $c=\inf_{n\ge 1} \frac{\E f_n}n>-\infty$ 
\item $g_1$ est intégrable, $g_n/n$ converge presque sûrement et dans $L^1$ vers $0$
\item il existe $\alpha>1$ et 
une suite $(C_p)_{p\ge 1}$ telles que 
$\E[ (r^+_{n,p})^{\alpha}]\le C_p$ pour tous $n,p$ et
$$\sum_{p=1}^{+\infty} \frac{C_p}{p^{\alpha}}<+\infty.$$
\end{itemize}
Alors,  la suite de terme général $\frac1{n}\E f_n$ converge; si on note $\mu$ sa limite, on a
 $$\E[ \miniop{}{\liminf}{n \to +\infty} \frac{f_n}{n}]\ge \mu.$$
Si l'on pose $\underline{f}=\miniop{}{\liminf}{n \to +\infty} \frac{f_n}{n}$, alors $\underline{f}$ est invariante par tous les $\theta_n$.
\end{theorem}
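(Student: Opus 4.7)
The plan is to establish the three conclusions in order, the last being the main technical obstacle.

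For the convergence of $\E f_n/n$, I take expectations in~(\ref{maiscestimportant}) and use that each $\theta_n$ preserves $\P$, obtaining $\E f_{n+p}\le \E f_n+\E f_p+\E g_p+\E r_{n,p}^+$. Jensen gives $\E r_{n,p}^+\le C_p^{1/\alpha}$; the summability $\sum C_p/p^\alpha<+\infty$ forces $C_p^{1/\alpha}=o(p)$, and combined with $\E g_p=o(p)$ the real sequence $a_n=\E f_n$ satisfies $a_{n+p}\le a_n+a_p+\epsilon_p$ with $\epsilon_p/p\to 0$. Together with the lower bound $a_n/n\ge c$, a Fekete--Hammersley argument (decomposing $n=kp+r$) yields $a_n/n\to\mu$ for some finite $\mu$.

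For the invariance of $\underline{f}$, I rearrange~(\ref{maiscestimportant}) as $f_p\circ\theta_n\ge f_{n+p}-f_n-g_p\circ\theta_n-r_{n,p}^+$, divide by $p$, and let $p\to+\infty$ with $n$ fixed. The term $f_n/p$ vanishes; $g_p\circ\theta_n/p\to 0$ a.s.\ by hypothesis; and $r_{n,p}^+/p\to 0$ a.s.\ by Markov plus Borel--Cantelli, using
$$\sum_p\P(r_{n,p}^+/p>\epsilon)\le\epsilon^{-\alpha}\sum_p C_p/p^\alpha<+\infty.$$
Since $\liminf_p f_{n+p}/p=\underline{f}$, this gives $\underline{f}\circ\theta_n\ge\underline{f}$ a.s.; a truncation argument on $\min(\underline{f},M)$ using $\theta_n$-invariance of $\P$ upgrades the inequality to equality, and letting $M\to+\infty$ concludes.

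The main difficulty lies in proving $\E\underline{f}\ge\mu$, for which I would follow Liggett's stopping-time strategy. For $\epsilon>0$, introduce the almost surely finite stopping index $N_\epsilon=\inf\{n\ge 1:\ f_n\le n(\underline{f}+\epsilon)\}$, truncate $N^M=\min(N_\epsilon,M)$, and build successive return times $\tau_0=0$, $\tau_{k+1}=\tau_k+N^M\circ\theta_{\tau_k}$. Iterating~(\ref{maiscestimportant}) along this partition of $\N$ yields
$$f_{\tau_k}\le\sum_{j=0}^{k-1}\bigl[f_{N^M\circ\theta_{\tau_j}}\circ\theta_{\tau_j}+g_{N^M\circ\theta_{\tau_j}}\circ\theta_{\tau_j}+r_{\tau_j,\,N^M\circ\theta_{\tau_j}}^+\bigr],$$
and by construction each main summand is bounded by $(N^M\circ\theta_{\tau_j})(\underline{f}\circ\theta_{\tau_j}+\epsilon)$ plus a correction supported on $\{N_\epsilon>M\}$. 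An ergodic averaging along the $\tau_k$-blocks, exploiting the invariance of $\underline{f}$ just established, yields $\limsup_k\E[f_{\tau_k}/\tau_k]\le\E\underline{f}+\epsilon+o_M(1)$; comparing with $\E f_n/n\to\mu$ from the first part gives $\mu\le\E\underline{f}+\epsilon+o_M(1)$, and letting $M\to+\infty$ then $\epsilon\to 0$ concludes.

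The principal obstacle is to control the accumulated error $\sum_{j<k}r_{\tau_j,\,N^M\circ\theta_{\tau_j}}^+$ over the random partition: the moment hypothesis and the summability $\sum C_p/p^\alpha<+\infty$ are precisely calibrated so that, with indices bounded by $N^M\le M$, the total defect is $o(\tau_k)$ in $L^1$. The requirement $\alpha>1$ is essential both for Jensen to give the usable bound $\E r_{n,p}^+\le C_p^{1/\alpha}$ and for the Borel--Cantelli step to succeed without any independence.
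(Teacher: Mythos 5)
Your treatment of the first two assertions is essentially the paper's own. Taking expectations in~(\ref{maiscestimportant}), bounding $\E r_{n,p}^+\le C_p^{1/\alpha}=o(p)$ by Jensen, and invoking Fekete for $\E f_n/n$ is exactly what the paper does. For the invariance of $\underline{f}$, the paper obtains $r_{n,p}^+/p\to 0$ a.s.\ by noting that $\E\big[\sum_p (r_{n,p}^+/p)^\alpha\big]\le\sum_p C_p/p^{\alpha}<+\infty$, which is a marginally cleaner version of your Markov--Borel--Cantelli argument; both are fine, and the passage from $\underline f\circ\theta_n\ge\underline f$ to equality via invariance of $\P$ is standard.

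The third part, however, rests on an assumption the theorem does not make and which fails in the intended application. The stopping-time construction $\tau_0=0$, $\tau_{k+1}=\tau_k+N^M\circ\theta_{\tau_k}$ and the subsequent ``ergodic averaging along the $\tau_k$-blocks'' implicitly treat $(\theta_n)_{n\ge 1}$ as the iterates $T^n$ of a single measure-preserving map $T$: that is what allows one to re-index block contributions by deterministic times, to assert that the composition $\theta$ applied at the random time $\tau_j$ still preserves $\P$, and to control the accumulated correction supported on $\{N_\epsilon>M\}$ by the invariance of $\P$ under each block shift. Here $(\theta_n)$ is only a family, each member individually preserving $\P$, with no semigroup relation; in the paper's application $\theta_n=\tilde\theta_{nx}\ne(\tilde\theta_x)^n$ precisely because $\sigma$ is only almost subadditive. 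In particular $\theta_{\tau_j}$ (with $\tau_j$ random) has no reason to preserve $\P$, so neither $\P(N_\epsilon\circ\theta_{\tau_j}>M)=\P(N_\epsilon>M)$ nor the $L^1$ smallness of $\sum_j r_{\tau_j,p_j}^+/\tau_k$ nor the estimate on $\sum_j g_{p_j}\circ\theta_{\tau_j}$ can be established. The paper is explicit about this distinction in its closing remarks: its standing hypothesis is of Hammersley--Welsh type (the law of the increment over $[n,n+p]$ depends only on $p$), which is strictly weaker than the Kingman--Liggett hypothesis (stationarity of the block process $(X_{(r-1)k,rk})_r$) that the Katznelson--Weiss/Steele stopping-time argument requires. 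There is a further, independent gap: you compare $\limsup_k\E[f_{\tau_k}/\tau_k]$ with $\mu=\lim_n\E f_n/n$, but the latter is a limit along \emph{deterministic} times, and $\E[f_{\tau_k}/\tau_k]\to\mu$ does not follow.

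What the paper actually does for $\E\underline f\ge\mu$ is Durrett--Liggett ``stationarization'': it sets $\Delta_k=f_{k+1}-f_k$, forms the Ces\`aro averages $L_n=\frac1n\sum_{j=1}^n\mathcal L(s^j\circ\Delta)$ of shifted laws (where $s$ is the coordinate shift on $\R^{\N^*}$, not any $\theta_n$), shows $(L_n)$ lies in a suitable tight compact set of laws, takes a shift-invariant weak limit $\gamma$, applies Birkhoff to a process $Y\sim\gamma$ to produce $Y_\infty$ with $\E Y_\infty=\int\pi_1\,d\gamma\ge\mu$, and finally shows $Y_\infty$ is stochastically dominated by $\underline f$. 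This route only ever manipulates laws at fixed deterministic indices and therefore needs nothing beyond each $\theta_n$ separately preserving $\P$. Your proof would need to be rewritten along those lines; as it stands, the block decomposition cannot get off the ground in the generality of the statement.
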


\begin{theorem}
\label{thergdeux}
On se place sous les hypothèses du théorème~\ref{therg}.
On suppose en plus que pour tout $k$,
$$\frac1{n}\left(f_{nk}-\sum_{i=0}^{n-1}f_k\circ(\theta_k)^i\right)^+\to 0\ \P\ \text{p.s.}$$
Alors $f_n/n$ converge presque sûrement vers $\underline{f}$.
\end{theorem}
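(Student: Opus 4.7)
Pour prouver le th\'eor\`eme~\ref{thergdeux}, l'id\'ee est d'utiliser l'hypoth\`ese additionnelle pour ramener l'\'etude de $f_{nk}$ ($n\to +\infty$, $k$ fix\'e) \`a celle d'une moyenne ergodique classique, \`a laquelle on pourra appliquer le th\'eor\`eme de Birkhoff. Fixons donc $k\ge 1$ et posons $\psi_k=\E[f_k\,|\,\mathcal{I}_k]$, o\`u $\mathcal{I}_k$ est la tribu des \'ev\'enements invariants par $\theta_k$. Le th\'eor\`eme de Birkhoff appliqu\'e \`a $f_k$ sous $\theta_k$ assure que $\frac1n\sum_{i=0}^{n-1}f_k\circ\theta_k^i$ converge presque s\^urement et dans $L^1$ vers $\psi_k$. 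Combin\'e avec l'hypoth\`ese suppl\'ementaire du th\'eor\`eme~\ref{thergdeux}, ceci donnera $\limsup_n f_{nk}/(nk)\le \psi_k/k$ presque s\^urement, pour chaque $k$ fix\'e.

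Il faut ensuite passer des sous-suites $(nk)_{n\ge 1}$ \`a la suite enti\`ere $(m)_{m\ge 1}$. Pour $m=nk+r$ avec $1\le r<k$, l'in\'egalit\'e~(\ref{maiscestimportant}) appliqu\'ee entre $nk$ et $r$ donne
$$\frac{f_m}{m}\le \frac{f_{nk}}{m}+\frac{f_r\circ\theta_{nk}}{m}+\frac{g_r\circ\theta_{nk}}{m}+\frac{r_{nk,r}^+}{m}.$$
Comme le nombre de valeurs possibles pour $r$ est fini, il suffira, pour chaque $r\in\{1,\dots,k-1\}$, de montrer que les trois derniers termes tendent presque s\^urement vers $0$ lorsque $n\to +\infty$. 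Puisque $\theta_{nk}$ pr\'eserve $\P$, la variable $f_r\circ\theta_{nk}$ a m\^eme loi que $f_r\in L^1$; l'in\'egalit\'e de Markov et le lemme de Borel-Cantelli donneront $f_r\circ\theta_{nk}/n\to 0$ presque s\^urement, et le m\^eme argument vaudra pour $g_r$. Pour $r_{nk,r}^+$, on exploitera l'hypoth\`ese $\E[(r_{nk,r}^+)^{\alpha}]\le C_r$ avec $\alpha>1$: on a $\P(r_{nk,r}^+>\varepsilon n)\le C_r/(\varepsilon n)^{\alpha}$, sommable en $n$, et Borel-Cantelli s'applique encore.

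On obtiendra ainsi $\overline f:=\limsup_m f_m/m\le \psi_k/k$ presque s\^urement, pour chaque $k\ge 1$. En int\'egrant, $\E[\overline f]\le \E[\psi_k]/k=\E[f_k]/k$, et en faisant tendre $k$ vers l'infini on trouve $\E[\overline f]\le \mu$. Or le th\'eor\`eme~\ref{therg} fournit $\E[\underline f]\ge \mu$; comme $\underline f\le \overline f$, les trois quantit\'es $\E[\underline f]$, $\E[\overline f]$ et $\mu$ co\"incident, et $\underline f=\overline f$ presque s\^urement. On conclura ainsi \`a la convergence presque s\^ure de $f_n/n$ vers $\underline f$.

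Le point le plus d\'elicat sera l'\'etape de raccordement entre les sous-suites $(nk)$ et la suite enti\`ere, et plus pr\'ecis\'ement le contr\^ole du reste $r_{nk,r}^+$: c'est l\`a que l'exposant $\alpha>1$ intervient de mani\`ere essentielle pour garantir, via Borel-Cantelli, une convergence presque s\^ure et non pas seulement une convergence en moyenne.
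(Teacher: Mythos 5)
Your overall strategy coincides with the paper's: bound $\limsup_n f_{nk}/(nk)$ by $\E[f_k\mid\mathcal I_k]/k$ via Birkhoff and the extra hypothesis, glue the subsequences $m=nk+r$ with the almost-subadditivity, integrate, let $k\to\infty$, then combine $\E[\overline f]\le\mu\le\E[\underline f]$ with $\underline f\le\overline f$. The handling of $r_{nk,r}^+/n$ via the $\alpha$-moment bound is also correct (and is essentially what the paper does, stated as a convergent series of expectations).

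There is however a genuine gap in the treatment of the remaining residual terms. You claim that, since $f_r\circ\theta_{nk}$ has the same law as $f_r\in L^1$, Markov's inequality and Borel--Cantelli give $f_r\circ\theta_{nk}/n\to 0$ almost surely, and similarly for $g_r$. This fails: Markov in $L^1$ gives
$\P\left(\lvert f_r\circ\theta_{nk}\rvert>\varepsilon n\right)\le \E\lvert f_r\rvert/(\varepsilon n)$,
whose sum over $n$ diverges, so Borel--Cantelli is not applicable. Mere $L^1$ integrability never yields a summable tail of order $1/n^{\alpha}$ with $\alpha>1$ -- that only works for the $r_{nk,r}^+$ term, precisely because of the hypothesis $\E[(r_{n,p}^+)^{\alpha}]\le C_p$. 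For an $L^1$ variable composed with a sequence of measure-preserving maps with no further structure, $X\circ\phi_n/n$ need not go to $0$ almost surely.

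The paper circumvents this by exploiting that the relevant shifts are iterates of a single transformation $\theta_k$: if $X\in L^1$ and $\theta$ is measure-preserving, then $X\circ\theta^n/n\to 0$ almost surely, as follows by telescoping Birkhoff's ergodic averages (the difference of two consecutive Ces\`aro means tends to $0$); equi-integrability of the finite family $(f_r+g_r)_{0\le r<k}$ then upgrades this to $L^1$ convergence. That argument is fundamentally dynamical, not a tail-probability estimate, and it is the step your proof is missing. Without it, the claimed convergence $(f_r+g_r)\circ\theta_{nk}/n\to 0$ a.s.\ is unjustified, and the passage from subsequences $(nk)_n$ to the full sequence does not go through.
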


\begin{proof}
La preuve de ces résultats, inspirée de celle de Liggett~\cite{MR806224}, sera donnée dans une annexe, où on trouvera également un certain nombre de commentaires.
\finpreuve\end{proof}

\begin{proof}[\debutpreuve du théorème~\ref{cvdir}]
On commence par appliquer le théorème~\ref{therg} en posant $f_n=\sigma(nx)$, $\theta_n=\tilde{\theta}_{nx}$, $g_p=0$, et $r_{n,k}=r(nx,kx)$ et en travaillant avec la probabilit\'e $\Pbarre$. On peut prendre $\alpha>1$ quelconque.
Le corollaire~\ref{propmoments} assure l'intégrabilité de $\sigma(x)$ sous $\Pbarre$ et le corollaire~\ref{momentsecart} donne les contrôles de moments nécessaires.

Vérifions maintenant les hypothèses du théorème~\ref{thergdeux}: on voit facilement que
$$t(nkx)\le \sum_{i=0}^{n-1}\sigma(kx)\circ (\tilde{\theta}_{kx})^i,$$
ce qui implique
$$\left(\sigma(nkx)-\sum_{i=0}^{n-1}\sigma(kx)\circ (\tilde{\theta}_{kx})^i\right)^+\le \sigma(nkx)-t(nkx),$$
qui est bien négligeable devant $n$ en l'infini, d'après le corollaire~\ref{pareil}.

Ainsi $\sigma(nx)/n$ converge bien vers une variable aléatoire $\mu(x)$, qui est invariante par $\tilde{\theta}_x$. D'après le théorème~\ref{systemeergodique}, cette variable est en fait une constante, ce qui achève la preuve de la convergence presque sûre.
Pour montrer qu'une suite
converge dans $L^p$, il suffit de montrer qu'elle converge presque
sûrement et qu'elle est bornée dans $L^q$ pour un $q>p$.
Or, le corollaire~\ref{propmoments} montre que $f_n/n$ est bornée
dans tous les $L^q$, ce qui permet de conclure.
\finpreuve\end{proof}

Nous allons maintenant prouver le théorème~\ref{thFA} de forme asymptotique.  On commence par montrer le théorème de forme asymptotique pour le temps d'atteinte essentiel $\sigma$, en procédant selon les étapes décrites ci-dessous, comme dans le schéma classique:
\begin{itemize}
\item On prolonge $\mu$ en une norme sur $\Rd$ dans le lemme~\ref{munorme}.
\item On montre que la convergence directionnelle du théorème~\ref{cvdir} est en fait une convergence uniforme en la direction dans le lemme~\ref{convunifsigma}.
\item Ce lemme implique facilement le résultat de forme asymptotique du lemme~\ref{formesigma}.
\end{itemize}
Pour montrer les résultats du même type pour le temps d'atteinte classique $t$ (lemme~\ref{etpourt}), il suffit alors de contrôler la différence entre $t$ et $\sigma$, ce que nous avons fait dans le lemme~\ref{pareil}. Finalement, le résultat de forme asymptotique pour la zone couplée est démontré dans le lemme~\ref{zoneC}, en introduisant un temps de couplage $t'$ et en contrôlant la différence entre ce temps $t'$ et le temps d'atteinte essentiel $\sigma$.

\begin{lemme}
\label{munorme}
La fonctionnelle $\mu$ se prolonge en une norme sur $\Rd$.
\end{lemme}

\begin{proof}
\underline{Homogénéité en entiers}
On sait que $\mu(x)=\lim \Ebarre\frac{\sigma(nx)}{n}$, et que
$\sigma(nx)$ et $\sigma(-nx)$ ont même loi sous $\Pbarre$, donc $\mu(x)=\mu(-x)$.
\`A l'aide de suites extraites, on prouve alors la propriété d'homogénéité en entiers:
$$\forall k\in\Z\quad\forall x\in\Zd \quad \mu(kx)=|k|\mu(x).$$  

\noindent
\underline{Sous-additivité}
On a 
$$\sigma(nx+ny)\le \sigma(nx)+\sigma(ny)\circ\tilde{\theta}_{nx}+r(nx,ny).$$
Comme $\tilde{\theta}_{nx}$ laisse invariant $\Pbarre$, il vient
\begin{eqnarray*}
\Ebarre\sigma(nx+ny)& \le & \Ebarre\sigma(nx)+\Ebarre\sigma(ny)+\Ebarre r(nx,ny)\\
& & \Ebarre\sigma(nx)+\Ebarre\sigma(ny)+M_1,
\end{eqnarray*}
d'après le corollaire~\ref{momentsecart},
 ce qui entraîne que $$\forall x \in \Zd \quad \forall y \in \Zd \quad \mu(x+y)\le \mu(x)+\mu(y).$$

\noindent
\underline{Extension à $\Rd$}
D'après le lemme de Fekete, $\mu(x)+M_1=\miniop{}{\inf}{n\ge 1}\frac{\Ebarre{\sigma(nx)+M_1}}n$, d'où  $\mu(x)\le \Ebarre \sigma(x)$.
Le corollaire~\ref{propmoments} donne l'existence de $L>0$ tel que $\Ebarre \sigma(x)\le L\|x\|$ pour tout $x$. Finalement, $\mu(x)\le L\|x\|$ pour tout $x$ dans $\Zd$, ce qui entraîne $|\mu(x)-\mu(y)|\le L\|x-y\|$: on peut alors prolonger 
$\mu$ sur $\Q^d$ par homogénéité, puis sur $\Rd$ par uniforme continuité.

\medskip
\noindent
\underline{Positivité}
Soit $M$ la constante donnée dans la proposition~\ref{propuniforme}. L'estimée (\ref{richard}) nous donne 
\begin{eqnarray*}
\Pbarre \left(\sigma(nx)< \frac{n\|x\|}{2M}\right) & \le &
\Pbarre \left(t(nx)< \frac{n\|x\|}{2M}\right) \le \Pbarre \left(\xi_{\frac{n\|x\|}{M}}\not\subset B_{\frac{n\|x\|}2} \right)\\
& \le &\int \frac{\P_{\lambda}\left(\tau=+\infty,\xi^0_{\frac{n\|x\|}{2M}}\not\subset B_{\frac{n\|x\|}2}\right)}{\P_{\lambda}(\tau=+\infty)}\ d\nu(\lambda)\\
& \le &\frac{A}{\rho}\exp\left(-B\frac{n\|x\|}{2M}\right).
\end{eqnarray*}
On en déduit, avec le lemme de Borel-Cantelli, que  $\mu(x)\ge \frac{1}{2M}\|x\|$.
L'inégalité, établie pour tout $x \in \Zd$, se prolonge par homogénéité et continuité à $\Rd$ tout entier, ce qui entraîne que $\mu$ est une norme. 
\finpreuve \end{proof}

Dans la suite, on pose  $C=2C_{\ref{esigmaa}}$, où $C_{\ref{esigmaa}}$ est donnée dans le corollaire~\ref{sigmaa}.

\begin{lemme}
\label{reglo}
Pour tout $\epsilon>0$, $\Pbarre$ presque sûrement, il existe $R$ tel que
$$\forall x,y \in \Zd \quad (\|x\|\ge R\text{ et }\|x-y\|\le \epsilon \|x\|)\Longrightarrow (|\sigma(x)-\sigma(y)|\le  C\epsilon\|x\|).$$
\end{lemme}

\begin{proof}
Pour $m \in \N$ et $\varepsilon>0$, on définit l'événement 
$$A_m(\epsilon)=\left\{\exists x,y \in \Zd: \; \|x\|=m, \; \|x-y\|\le \epsilon m \text{ et } |\sigma(x)-\sigma(y)|>C\epsilon m \right\}.$$
En remarquant que
$$A_m(\epsilon) \subset  \bigcup_{\substack  {(1-\epsilon) m\le \|x\| \le (1+\epsilon)m \\ \|x-y\|\le \epsilon m}} \left\{\sigma(y-x)\circ\tilde{\theta}_{x}+r(x,y-x)>C\epsilon m\right\},$$
il vient, à l'aide des corollaires~\ref{sigmaa} et~\ref{momentsecart},
\begin{eqnarray*}
\Pbarre_{\lambda} ( A_m(\epsilon) ) & \le &  \sum_{\substack {(1-\epsilon) m\le \|x\| \le (1+\epsilon)m\\ \|z\|\le\epsilon m}} \Pbarre_{{\lambda}}(\sigma(z)\circ\tilde{\theta}_{x}+r(x,z)>C\epsilon m)\\
 & \le &  \sum_{\substack {(1-\epsilon) m\le \|x\| \le (1+\epsilon)m\\ \|z\|\le\epsilon m}}  \Pbarre_{\T{x}{\lambda}}(\sigma(z)>2C\epsilon m/3)\\ & &+ \quad \quad \Pbarre_{{\lambda}}(r(x,y-x)>C\epsilon m/3)\\
& \le & ((1+2\epsilon m)^d((1+2(1+\epsilon) m)^d(A_{\ref{esigmaa}}\exp(-B_{\ref{esigmaa}}\sqrt{C\epsilon m /3})\\ & &\quad \quad +A_{\ref{momecart}}\exp(-B_{\ref{momecart}}\sqrt{C'\epsilon m /3})).
\end{eqnarray*}
grâce au corollaire~\ref{sigmaa} et au théorème~\ref{presquesousadditif}.
On intègre alors cette inégalité par rapport à l'environnement $\lambda$, et le lemme de Borel-Cantelli permet de conclure.
\finpreuve \end{proof}


On peut maintenant montrer que la convergence du théorème~\ref{cvdir}  est uniforme par rapport à la direction.

\begin{lemme} 
\label{convunifsigma}
$\Pbarre$ presque sûrement, $\displaystyle \lim_{\|x\|\to +\infty} \frac{|\sigma(x)-\mu(x)|}{\|x\|}=0$.
\end{lemme}

\begin{proof}
On raisonne par l'absurde et on suppose qu'il existe $\epsilon>0$ tel que l'événement ``$|\sigma(x)-\mu(x)|> \epsilon\|x\|$ pour un  ensemble infini de valeurs de $x$'' a une probabilité strictement positive.
Plaçons nous sur cet événement. Alors, il existe une suite aléatoire   $(y_n)_{n \ge 0}$ de sommets de $\Z^d$ telle que $\|y_n\|_1 \rightarrow + \infty$ et, pour tout $n$, $|\sigma(y_n)-\mu(y_n)| \geq \varepsilon \|y_n\|_1$. 
Quitte à prendre une sous-suite, on peut supposer
$$\frac{y_n}{\|y_n\|_1} \rightarrow z.$$
Approchons $z$ par un point rationnel: considérant  $\varepsilon_1>0 $ (qui sera choisi plus tard), on peut prendre $z' \in\Zd$ tel que 
$$\left\| \frac{z'}{\|z'\|_1} -z\right\|_1\leq \varepsilon_1.$$ 
On peut trouver, pour chaque $y_n$, un point entier sur la ligne $\R z'$ 
qui est suffisamment près de $y_n$:  soit $h_n$ la partie entière de  $\frac{\|y_n\|_1}{\|z'\|_1}$. On a
\begin{eqnarray*} 
\|y_n-h_n.z'\|_1 & \leq & \left\|y_n- \frac{\|y_n\|_1}{\|z'\|_1} z'\right\|_1+ \left| \frac{\|y_n\|_1}{\|z'\|_1}- h_n\right| \|z'\|_1 \\
& \leq &\|y_n\|_1  \left\|\frac{y_n}{\|y_n\|_1}-\frac{z'}{\|z'\|_1}\right\|_1 +\|z'\|_1
\end{eqnarray*}
Prenons $N>0$ suffisamment grand pour  que
$\big(n \geq N\big) \Rightarrow \big(\|\frac{y_n}{\|y_n\|_1}-z\|_1\leq \varepsilon_1\big)$.
Grâce au choix que l'on a fait pour $z'$, on a
$$\big(n \geq N\big) \Rightarrow \big(\left\|\frac{y_n}{\|y_n\|_1}-\frac{z'}{\|z'\|_1}\right\|_1 \leq 2\varepsilon_1\big),$$
et par conséquent,
$ \|y_n-h_n.z'\|_1 \leq 2\varepsilon_1\|y_n\|_1 +\|z'\|_1$.
Ainsi, quitte à augmenter $N$ si besoin, on a pour tout $n\ge N$,
$ \|y_n-h_n.z'\|_1 \leq 3\varepsilon_1\|y_n\|_1$.
Cependant, si on prend $N$ suffisamment grand, le lemme~\ref{reglo}
implique alors que l'on a 
$$\forall n\ge N\quad |\sigma(y_n)-\sigma(h_n.z')| \leq 3C \varepsilon_1\|y_n\|_1.$$ 
Finalement, pour tout $n$ assez grand, on a
\begin{eqnarray*} 
|\sigma(y_n)-\mu(y_n)| & \leq & |\sigma(y_n)-\sigma(h_n.z')|+|\sigma(h_n.z')-\mu(h_n.z')|+ 
|\mu(h_n.z')-\mu(y_n)| \\
& \leq & 3C \varepsilon_1\|y_n\|_1 + h_n \left|\frac{\sigma(h_n.z')}{h_n}-\mu(z')\right| +  L\|h_n.z'-y_n\|_1 \\
& \leq & 3C \varepsilon_1\|y_n\|_1 +(1+\varepsilon_1)\frac{\|y_n\|_1}{\|z'\|_1}\left|\frac{\sigma(h_n.z')}{h_n}-\mu(z')\right|+ 3\varepsilon_1 L\|y_n\|_1.\end{eqnarray*}
Mais la convergence presque sûre dans la direction donnée par $z'$
assure que pour $n$ assez grand
$$\left|\frac{\sigma(h_n.z')}{h_n}-\mu(z')\right| \leq \varepsilon_1.$$
Si l'on prend $\epsilon_1$ assez petit, on obtient que pour $n$ assez grand
$|\sigma(y_n)-\mu(y_n)| < \varepsilon\|y_n\|_1,$
ce qui amène la contradiction.
\finpreuve \end{proof}


On déduit alors de la convergence uniforme du lemme~\ref{convunifsigma} le théorème de forme asymptotique pour la version grossie $\tilde G_t$ de $G_t=\{x\in\Zd: \sigma(x)\le t\}$; on rappelle que $A_\mu$ désigne la boule unité pour la norme $\mu$.

\begin{lemme}
\label{formesigma}
Pour tout $\epsilon>0$, avec probabilité $1$ sous $\Pbarre$, pour tout $t$ suffisamment grand,
$(1-\epsilon)A_{\mu}\subset\frac{\tilde G_t}t\subset (1+\epsilon)A_{\mu}$.
\end{lemme}

\begin{proof}
Montrons par l'absurde que pour $t$ assez grand, on a bien $\frac{G_t}t\subset (1+\epsilon)A_{\mu}$.
Supposons qu'il existe une suite $(t_n)_{n\ge 1}$,
avec $t_n\to +\infty$ et $\frac{G_{t_n}}{t_n}\not\subset (1+\epsilon)A_{\mu}$: il existe donc $x_n$ avec $\sigma(x_n)\le t_n$ et $\mu(x_n)/t_n>1+\epsilon$.
Ainsi $\mu(x_n)/\sigma(x_n)>1+\epsilon$, ce qui contredit la convergence uniforme puisque, comme $\mu(x_n)>t_n(1+\epsilon)$, la suite $(\|x_n\|)_{n\ge 1}$ tend vers l'infini.

Passons à l'inclusion inverse. En raisonnant toujours par l'absurde, on a
une suite $(t_n)_{n\ge 1}$, avec $t_n\to +\infty$ et $(1-\epsilon)A_{\mu}\not\subset\frac{\tilde G_{t_n}}{t_n}$, ce qui veut dire qu'on peut trouver $x_n$ avec $\mu(x_n)\le (1-\epsilon)t_n$, mais $\sigma(x_n)>t_n$.
La suite $(x_n)_{n\ge 1}$ ne peut pas être bornée (c'est à dire ne prendre qu'un nombre fini de valeur) car $t_n$ tend vers l'infini.
Ainsi on a $\frac{\mu(x_n)}{\sigma(x_n)}<1-\epsilon$, ce qui contredit encore une fois la convergence uniforme.
\finpreuve \end{proof}

Grâce au lemme~\ref{pareil}, on récupère alors immédiatement la convergence uniforme pour le temps d'atteinte~$t$ et, par un argument identique à celui du lemme~\ref{formesigma}, le théorème de forme asymptotique pour la version grossie $\tilde H_t$ de $H_t=\{x\in\Zd: t(x)\le t\}$.

\begin{lemme} 
\label{etpourt}
$\Pbarre$ presque sûrement, $\displaystyle \lim_{\|x\|\to +\infty} \frac{t(x)-\mu(x)}{\|x\|}=0,$ \\
et pour tout $\epsilon>0$, avec probabilité $1$ sous $\Pbarre$, pour tout $t$ suffisamment grand,
$$(1-\epsilon)A_{\mu}\subset\frac{\tilde H_t}t\subset (1+\epsilon)A_{\mu}.$$
\end{lemme}

Il nous reste maintenant à prouver le théorème de forme asymptotique pour la zone couplée $\tilde K'_t$, version grossie de $K'_t=\{x\in\Zd: \; \forall s\ge t \quad \xi^0_s(x)=\xi^{\Zd}_s(x)\}$:

\begin{lemme}
\label{zoneC}
Pour tout $\epsilon>0$, avec probabilité $1$ sous $\Pbarre$, pour tout $t$ suffisamment grand,
$(1-\epsilon)A_{\mu}\subset\frac{\tilde K'_t \cap \tilde G_t}t$.
\end{lemme}

\begin{proof}
Comme $t\mapsto K'_t\cap G_t$ est croissant, on se retrouve dans le même schéma de preuve que dans le lemme~\ref{formesigma}. On pose, pour $x \in \Zd$,
$$t'(x)=\inf\{t\ge 0:\;x\in K'_t\cap G_t\}.$$ Il suffit alors de montrer que 
$\Pbarre$ presque sûrement,
$\miniop{}{\lim}{\|x\|\to +\infty} \frac{|t'(x)-\sigma(x)|}{\|x\|}=0$. 
Par définition, $t'(x)\ge \sigma(x)$; ainsi il suffit
de montrer qu'il existe deux constantes $A',B'>0$ telles que
\begin{equation}
\label{unbut}
\forall x\in\Zd\quad \forall s\ge 0\quad\Pbarre( t'(x)-\sigma(x)\ge s)\le A'e^{-B' s}.
\end{equation}

$\bullet$ On commence par remarquer que, pour tout $t \ge0$, $K_{\sigma(x)+t}\supset x+K_t \circ \tilde{\theta}_x$. \\
En effet, soit $z \in x+K_t \circ \tilde{\theta}_x$. 
Considérons d'abord le cas où  $z \not\in \xi_{\sigma(x)+t}^{\Zd}$.
Comme, par additivité~(\ref{additivite}), $\xi_{\sigma(x)+t}^{0}\subset \xi_{\sigma(x)+t}^{\Zd}$,  alors $z \not\in \xi_{\sigma(x)+t}^{0}$, et donc $z\in  K_{\sigma(x)+t}$.

Considérons maintenant le cas où $z\in \xi^{\Zd}_{\sigma(x)+t}$. Comme $\xi^{\Zd}_{\sigma(x)} \subset \xi^{\Zd}_{0}\circ\tilde{\theta}_x$ par additivité, on a $y=z-x \in \xi^{\Zd}_{t}\circ\tilde{\theta}_x$.
Mais comme $y\in K_t \circ \tilde{\theta}_x$, il s'ensuit, par définition de
$K_t$, que $\xi^{0}_{t}(y)\circ\tilde{\theta}_x=\xi^{\Zd}_{t}(y)\circ\tilde{\theta}_x=1$.
Comme $x\in\xi^0_{\sigma(x)}$ et que $y\in  \xi^{0}_{t}\circ\tilde{\theta}_x$, on a $z=x+y\in\xi^0_{\sigma(x)+t}$, d'où $z\in  K_{\sigma(x)+t}$.

\medskip
$\bullet$ Soit $s \ge 0$ fixé. Le point précédent assure que
$$\left(\bigcap_{t\ge s} K_{\sigma(x)+t}\right)  \supset \left( x+\bigcap_{t\ge s}(K_t \circ \tilde{\theta}_x)\right), \text{ et donc }
K'_{\sigma(x)+s}\supset \left(x+(K'_s \circ \tilde{\theta}_x)\right).$$
 Ainsi, en utilisant l'invariance de $\Pbarre$ sous l'action de $\tilde{\theta}_x$, on obtient:
\begin{eqnarray*}
\Pbarre(t'(x)>\sigma(x)+s) & = & \Pbarre(x\notin K'_{\sigma(x)+s}\cap G_{\sigma(x)+s}) \\
& =& \Pbarre(x\notin K'_{\sigma(x)+s})\\
& \le & \Pbarre \left( x\notin (x+K'_s  \circ \tilde{\theta}_{x}) \right) \\
& \le & \Pbarre(0\notin K'_s\circ\tilde{\theta}_{x} )=\Pbarre(0\notin K'_s).
\end{eqnarray*}
L'estim\'ee~(\ref{petitsouscouple}) permet alors de conclure.
\finpreuve \end{proof}

\section{Contrôle uniforme de la croissance en environnement $\lambda$}
\label{restartestimees}

Le but de cette section est d'établir les contrôles uniformes en $\lambda$ annoncés dans la proposition~\ref{propuniforme}. 
Afin de contrôler la croissance du processus de contact, on a besoin de quelques lemmes sur le modèle de Richardson.

\subsection{Quelques lemmes sur le modèle de Richardson}
On appelle modèle de Richardson de paramètre $\lambda$ le processus de Markov  $(\eta_t)_{t\ge 0}$, homogène en temps, qui prend ses valeurs dans $\mathcal{P}(\Zd)$ et dont l'évolution est définie comme suit:  les sites $z$ vides deviennent infectés au taux $\displaystyle \lambda\sum_{\|z-z'\|_1=1} \eta_t(z') $, ces différentes évolutions étant indépendantes les unes des autres. 
Grâce à la construction graphique, on peut, pour tout $\lambda\in\Lambda$,
coupler le processus
de contact en environnement $\lambda$ avec  le modèle de Richardson de paramètre $\lambda_{\max}$, de telle manière que l'espace occupé au temps $t$ par le processus de contact est toujours contenu dans l'espace occupé par le modèle de Richardson.

Le premier lemme, dont nous omettons la preuve, découle aisément de la représentation du modèle de Richardson en terme de percolation de premier passage et d'un comptage de chemins. 

\begin{lemme}
\label{Richardson1}
Pour tout $\lambda>0$, il existe des constantes $A,B>0$ telles que 
$$\forall t\ge 0 \quad \P(\eta_1\not\subset B_t)\le A\exp(-B t).$$
\end{lemme}




\begin{lemme}
\label{Richardson2}
Pour tout $\lambda>0$, il existe des constantes $A,B,M>0$ telles que $$\forall s\ge 0 \quad \P(\exists t\ge 0: \; \eta_t \not\subset B_{Mt+s})\le A\exp(-Bs).$$
\end{lemme} 

\begin{proof}
La représentation en termes de percolation de premier passage du modèle de Richardson
assure l'existence de $A',B',M'>0$ tels que
pour tout $t \ge 0$,
\begin{equation}
\label{voirkesten}
\P(\eta_t\not\subset B_{M't}) \le A'\exp(-B't).
\end{equation}
Pour plus de détails, on pourra se reporter à Kesten~\cite{kesten}.

On commence par contrôler le processus aux temps entiers grâce à cette estimée:
\begin{eqnarray}
 \P(\exists k\in\N: \; \eta_k\not\subset B_{M'k+s/2})
& \le &\P(\exists k\in\N: \; \eta_{k+s/(2M')}\not\subset B_{M'k+s/2}) \nonumber \\
&{\le} & \sum_{k=0}^{+\infty}\P( \eta_{k+s/(2M')}\not\subset B_{M'k+s/2})  \nonumber \\
&  \le &  \frac{A'}{1-\exp(-B')}\exp\left(-\frac{B's}{2M'}\right). \label{olala}
\end{eqnarray}
Contrôlons maintenant les fluctuations entre les temps entiers. 
 Soit $M>M'$:
\begin{eqnarray}
&&  \P(\{\exists t\ge 0:\; \eta_t\not\subset B_{Mt+s}\}\cap \{\forall k \in \N, \; \eta_k\subset B_{M'k+s/2} \}) \nonumber \\
& \le &  \sum_{k=0}^{+\infty}\P( \exists t\in [k,k+1]: \; \eta_{k}\subset B_{M'k+s/2} \text{ et } \eta_t\not\subset B_{Mt+s}). \label{olili}
\end{eqnarray}
Mais alors, si $C'>0$ est une constante telle que $\Card{B_t}\le C'(1+t)^d$ et si $A,B$ sont les constantes du lemme~\ref{Richardson1},
\begin{eqnarray}
&& \P( \exists t\in [k,k+1]: \; \eta_{k}\subset B_{M'k+s/2} \text{ et } \eta_t\not\subset B_{Mt+s})\nonumber \\
& \le &\P(\eta_{k}\subset B_{M'k+s/2} \text{ et } \eta_{k+1}\not\subset B_{Mk+s})\nonumber\\
&   {\le} & \Card{B_{M'k+s/2}} \P(\eta_1\not\subset B_{ k(M-M')+s/2}) \label{rouge}\\
& \le & C'(1+ M'k+s/2)^dA\exp(-B(k(M-M')+s/2))\nonumber\\
& \le & AC' (1+s/2)^d\exp(-Bs/2)(1+M'k)^d\exp(-B(k(M-M'))\nonumber.
\end{eqnarray}
L'inégalité $(\ref{rouge})$ vient de la propriété de Markov et de l'additivité du processus de contact.
Comme la série de terme général $(1+M'k)^d\exp(-B(k(M-M'))$ converge, le résultat souhaité découle de~(\ref{olala}) et~(\ref{olili}).
\finpreuve \end{proof}

\subsection{Un procédé de redémarrage}

On va utiliser ici un argument dit de redémarrage, que l'on peut  résumer comme suit. 
On couple le système que l'on souhaite étudier (système fort) avec un système qu'il domine stochastiquement (système faible)
et que l'on connaît mieux. On peut alors transporter un certain nombre de propriétés
du système connu à celui que l'on doit étudier: on laisse évoluer les deux systèmes conjointement, et à chaque
fois que le plus faible  meurt et que le plus fort est vivant, on fait repartir 
une copie du plus faible toujours couplée au plus fort.
Ainsi, soit les deux processus meurent avant qu'on ait pu trouver un processus faible capable de survivre, et dans ce cas le contrôle des grands temps de survie du faible peut se transposer sur le fort, soit le plus fort survit indéfiniment et on finit par le coupler avec un faible qui survit. Dans ce cas, un contrôle du temps nécessaire pour un redémarrage réussi permet de transférer des
propriétés du processus faible lorsqu'il survit sur le processus fort.

Cette technique est déjà ancienne; on la trouve par exemple chez Durrett~\cite{MR757768}, section 12, sous une forme très pure. Elle est aussi utilisée par Durrett et Griffeath~\cite{MR656515}, afin de transporter des contrôles connus pour le processus de contact en dimension $1$ au processus de contact en dimension supérieure. Nous allons ici l'utiliser en couplant le processus de contact en environnement inhomogène $\lambda \in \Lambda$ avec le processus de contact avec taux de naissance constant $\lambda_{\min}$. C'est ici que l'hypothèse $\lambda_{\min}>\lambda_c(\Zd)$ est importante. 

Pour ce faire, nous allons coupler des familles de processus ponctuels de Poisson. Fixons  $\lambda \in \Lambda$. On peut construire une mesure de probabilité $\tilde \P_{\lambda}$ sur $\Omega \times \Omega$ sous laquelle
\begin{itemize}
\item La première coordonnée $\omega$ est une famille $((\omega_e)_{e \in \Ed}, (\omega_z)_{z \in \Zd})$ de processus ponctuels de Poisson , d'intensités respectives $(\lambda_e)_{e \in \Ed}$ pour les processus indexés par les arêtes, et d'intensité $1$ pour les processus indexés par les sites.
\item La  seconde coordonnée $\eta$ est une famille $((\eta_e)_{e \in \Ed}, (\eta_z)_{z \in \Zd})$ de processus ponctuels de Poisson , d'intensité $\lambda_{\min}$ pour les processus indexés par les arêtes, et d'intensité $1$ pour les processus indexés par les sites.
\item Les processus ponctuels de Poisson indexés par les sites (les temps de mort) coïncident: pour tout $z \in \Zd$, $\eta_z=\omega_z$.
\item Les processus ponctuels de Poisson indexés par les arêtes (les temps des éventuelles naissances) sont couplés: pour tout $e \in \Ed$, le support de $\eta_e$ est inclus dans celui de $\omega_e$.
\end{itemize}
On note $\xi^A=\xi^A(\omega,\eta)$ le processus de contact dans l'environnement 
$\lambda$ partant de $A$ construit avec la famille de processus de Poisson $\omega$, et
$\zeta^B=\zeta^B(\omega,\eta)$ le processus de contact dans l'environnement 
$\lambda_{\min}$ partant de $B$ construit avec la famille de processus de Poisson $\eta$.
Si $B \subset A$, alors on a $\tilde \P_{\lambda}$ presque sûrement 
$\zeta_t^B \subset \xi_t^A$ pour tout $t \ge 0$. On peut remarquer que le processus $(\xi^A,\zeta^B)$ est un processus de Markov.

On introduit les temps de vie de ces deux processus:
$$
\tau  =  \inf\{t\ge 0: \; \xi^0_t=\varnothing\} \text{ et, pour }x\in\Zd, \; 
\tau'_x  =  \inf\{t\ge 0: \; \zeta^x_t=\varnothing\}. 
$$
Remarquons que la loi de $\tau'_x$ sous $\tilde{\P}_{\lambda}$ est la loi de $\tau_x$ sous $\P_{\lambda_{\min}}$; cette loi est en fait indépendante du point de départ du processus, puisque le modèle à taux de naissance constant est invariant par translation.

On définit par récurrence une suite de temps d'arrêt $(u_k)_{k \ge 0}$ et une suite de points $(z_k)_{k \ge 0}$ en posant $u_0=0$, $z_0=0$, et pour tout $k \ge 0$:
\begin{itemize}
\item si $u_k<+\infty$ et $\xi_{u_k}\neq \varnothing$, alors $u_{k+1}=\tau'_{z_k} \circ \theta_{u_k}$;
\item si $u_k=+\infty$ ou si $\xi_{u_k}= \varnothing$, alors $u_{k+1}=+\infty$;
\item si $u_{k+1}<+\infty$ et $\xi_{u_{k+1}}\neq \varnothing$, alors $z_{k+1}$ est le plus petit point pour l'ordre lexicographique de $\xi_{u_{k+1}}$;
\item si $u_{k+1}=+\infty$ ou si $\xi_{u_{k+1}}= \varnothing$, alors $z_{k+1}=+\infty$.
\end{itemize}
 Autrement dit, tant que $u_k<+\infty$ et $\xi_{u_k}\neq \varnothing$, on prend dans 
$\xi_{u_k}$ le point $z_k$ le plus petit pour  l'ordre lexicographique, et on regarde le temps de vie du processus le plus faible, c'est à dire $\zeta$, partant de $z_k$ au temps $u_k$. Le procédé de redémarrage peut s'arrêter pour deux raisons:
soit on trouve un $k$ tel que $u_k<+\infty$ et $\xi_{u_k}= \varnothing$, ce qui implique que le processus le plus fort (qui contient le faible) meurt exactement au temps $u_k$; 
soit on trouve un $k$ tel que $u_k<+\infty$, $\xi_{u_k} \neq \varnothing$, et $u_{k+1}=+\infty$. Dans ce deuxième cas, on a trouvé un point $z_k$ tel que le processus faible, partant de $z_k$ au temps $u_k$, survit, ce qui implique en particulier que le fort, qui le contient, survit.
On pose alors 
$$K=\inf\{n\ge 0:\;u_{n+1}=+\infty\}.$$
Le nom de la variable $K$ est choisi par analogie avec la section~\ref{sigma}.
Cette section étant indépendante du reste de l'article, la confusion ne devrait cependant pas être possible. Il ressort de la discussion précédente que
\begin{equation}
\label{arev}
(\tau=+\infty \Longleftrightarrow \xi^0_{u_K}\neq \varnothing) \quad \text{ et si } \tau<+\infty, \text{ alors } u_K=\tau.
\end{equation}

On regroupe dans le lemme suivant les estimées sur le procédé de redémarrage nécessaires pour démontrer la proposition~\ref{propuniforme}. On rappelle que $\rho$ est introduit dans l'équation~(\ref{uniftau}).
\begin{lemme}
\label{restart}
On se place dans le cadre précédent. Alors
\begin{itemize}
\item $\forall \lambda \in \Lambda\quad \forall n\in\N\quad\tilde{\P}_\lambda(K>n)\le(1-\rho)^n$.
\item   $\forall B \in \mathcal B(\mathcal D)\quad\tilde{\P}_\lambda(\tau=+\infty, \zeta^{z_K}\circ \theta_{u_K} \in B) = \P_\lambda(\tau=+\infty)\Pbarre_{\lambda_{\min}}(\xi^0 \in B)$.
\item Il existe $\alpha,\beta>0$ tel que pour tout $\lambda \in \Lambda$, $\tilde{\E}_{\lambda}(\exp(\alpha u_K))<\beta$.
\end{itemize}
\end{lemme}

\begin{proof}
D'après la propriété de Markov forte, on a
\begin{eqnarray*}
\tilde{\P}_\lambda(K\ge n+1) & = & \tilde{\P}_\lambda(u_{n+1}<+\infty) \\
& = & \tilde{\P}_\lambda(u_n<+\infty, \; \xi_{u_n} \neq \varnothing, \; \tau'_{z_n} \circ\theta_{u_n})<+\infty) \\
& \le & \tilde{\P}_{\lambda}(u_n<+\infty) (1-\rho)=\tilde{\P}_\lambda(K \ge n)(1-\rho).
\end{eqnarray*}
Ainsi, $K$ a une queue sous-géométrique, ce qui montre le premier point. En particulier, $K$ est presque sûrement fini.

En utilisant~(\ref{arev}) et la propriété de Markov forte, on a encore
\begin{eqnarray*}
&& \tilde{\P}_\lambda(\tau=+\infty, \zeta^{z_K}\circ {\theta}_{u_K} \in B) 
 =  \tilde{\P}_\lambda(\xi_{u_K}\neq \varnothing, \zeta^{z_K}\circ {\theta}_{u_K} \in B) \\
& = & \sum_{k=0}^{+\infty}\sum_{z \in \Zd} \tilde{\P}_\lambda(K=k, \; \xi^0_{u_k}\neq \varnothing, \; z_k=z, \; \zeta^{z_K}\circ {\theta}_{u_K} \in B) \\
& = & \sum_{k=0}^{+\infty}\sum_{z \in \Zd} \tilde{\P}_\lambda(u_k<+\infty, \; \xi^0_{u_k}\neq \varnothing, \; z_k=z, \; \tau'_{z_K}  \circ \theta_{u_k}=+\infty, \; \zeta^{z_K}\circ {\theta}_{u_K} \in B) \\
& = & \sum_{k=0}^{+\infty}\sum_{z \in \Zd} \tilde{\P}_\lambda(u_k<+\infty, \; \xi^0_{u_k}\neq \varnothing, \; z_k=z)\P_{\lambda_{\min}}(\tau=+\infty, \; \xi^{0} \in B) \\
& = & \P_{\lambda_{\min}}( \tau=+\infty, \; \xi^{0} \in B) \sum_{k=0}^{+\infty}\tilde{\P}_\lambda(u_k<+\infty, \; \xi^0_{u_K}\neq \varnothing).
\end{eqnarray*}
En prenant pour $B$ l'ensemble des trajectoires, on identifie:
$$\tilde{\P} (\tau=+\infty)= \P_\lambda(\tau=+\infty)=\P_{\lambda_{\min}}(\tau=+\infty)\sum_{k=0}^{+\infty}\tilde{\P}_\lambda(u_k<+\infty,\xi^0_{u_K}\neq \varnothing),$$ 
ce qui nous donne le deuxième point

Comme $\lambda_{\min}>\lambda_c(\Zd)$, les résultats de Durrett et Griffeath~\cite{MR656515} pour les grands~$\lambda$, étendus à tout le régime surcritique par Bezuidenhout et Grimmett~\cite{MR1071804}, assurent l'existence de $A,B>0$ telles que
$$\forall t\ge 0\quad \P_{\lambda_{\min}}(t \le \tau <+\infty) \le A \exp(-Bt),$$
ce qui donne l'existence de moments exponentiels pour $\tau \1_{\{\tau<+\infty\}}$.
Comme $\P_{\lambda_{\min}}(\tau=+\infty)>0$, on peut choisir (en utilisant par exemple le théorème de convergence dominée) un $\alpha>0$ tel que $\E_{\lambda_{\min}}(\exp(\alpha\tau)\1_{\{\tau<+\infty\}})=r<1$. \\
Pour $k\ge 0$, on note
$$S_k=\exp \left( \alpha \sum_{i=0}^{k-1}\tau'_{z_i} \circ \theta_{u_i} \right)\1_{\{u_{k}<+\infty\}}.$$
On remarque que $S_k$ est $\mathcal{F}_{u_{k}}$-mesurable. Soit $k\ge 0$. On a
$$\exp(\alpha u_K)\1_{\{K= k\}}\le S_{k}.$$
Ainsi, en appliquant la propriété de Markov forte au temps $u_{k-1}<+\infty$, on obtient, pour $k \ge 1$
\begin{eqnarray*}
\tilde{\E}_\lambda[\exp(\alpha u_K)\1_{\{ K= k\}}] & \le & \tilde{\E}_\lambda(S_k)=\tilde{\E}_\lambda(S_{k-1}){\E}_{\lambda_{\min}}(\exp(\alpha\tau)\1_{\{\tau<+\infty\}}) \\
& \le & r\tilde{\E}_\lambda(S_{k-1}).
\end{eqnarray*}
Comme $r<1$, on en déduit que $\tilde{\E}_\lambda[\exp(\alpha u_K)]\le \frac{r}{1-r}<+\infty$.
\finpreuve \end{proof}

\subsection{Preuve de la proposition~\ref{propuniforme}}

Les estimées (\ref{richard}) et (\ref{uniftau}) découlent d'une simple comparaison stochastique:

\begin{proof}[\debutpreuve de (\ref{uniftau})]  Il suffit de remarquer que pour tout environnement $\lambda \in \Lambda$ et tout $z\in\Zd$, on a
$$\P_{\lambda}(\tau^z=+\infty) \ge \P_{\lambda_{\min}}(\tau^z=+\infty)=\P_{\lambda_{\min}}(\tau^0=+\infty)>0.$$
\finpreuve \end{proof}

\begin{proof}[\debutpreuve de~(\ref{richard})] On utilise la domination  stochastique du  processus de contact en environnement $\lambda$ par
le modèle de Richardson de paramètre  $\lambda_{\max}$. Pour ce modèle, la croissance au plus linéaire est assurée par~(\ref{voirkesten}). 
\finpreuve \end{proof}

Il nous reste donc à démontrer (\ref{grosamasfinis}), (\ref{retouche}) et (\ref{petitsouscouple}) par le procédé de redémarrage.
 
\begin{proof}[\debutpreuve de~(\ref{grosamasfinis})]
Soit $\alpha,\beta>0$ données dans le troisième point du lemme~\ref{restart}. Rappelons que, sur $\{\tau<+\infty\}$, $u_{K}=\tau$. On a,
pour tout $\lambda \in \Lambda$, pour tout $t>0$,
\begin{eqnarray*}
\P_\lambda(t<\tau<+\infty)& =& \P_\lambda(e^{\alpha t}<e^{\alpha \tau},\tau<+\infty)
 =  \tilde{\P}_\lambda(e^{\alpha t}<e^{\alpha u_{K}},\tau<+\infty)\\
& \le& \tilde{\P}_\lambda(e^{\alpha t}<e^{\alpha u_{K} })\le e^{-\alpha t} \tilde{\E}_\lambda e^{\alpha u_{K}} \le \beta e^{-\alpha t},
\end{eqnarray*}
ce qui termine la preuve.
\finpreuve \end{proof}

\begin{proof}[\debutpreuve de~(\ref{retouche})] Comme $\lambda_{\min}>\lambda_c(\Zd)$, les résultats de Durrett et Griffeath~\cite{MR656515} pour les grands $\lambda$, étendus à tout le régime surcritique par Bezuidenhout et Grimmett~\cite{MR1071804}, assurent l'existence de constantes $A,B,c>0$ telles que, pour tout $y \in \Zd$, pour tout $t \ge 0$,
\begin{equation}
\label{blibli}
\Pbarre_{\lambda_{\min}}\left( t(y) \ge \frac{\|y\|}{c}+t \right) \le A \exp(-Bt).
\end{equation}
D'autre part, le contrôle par le modèle de Richardson du lemme~\ref{Richardson2} avec $\lambda_{\max}$ assure l'existence de $A,B,M$ tels que pour tout $\lambda \in \Lambda$, pour tout $s \ge 0$,
\begin{equation}
\label{blabla}
\P_{\lambda}(\exists t\ge0, \; \xi^0_t \not\subset B_{Mt+s}) \le A\exp(-Bs).
\end{equation}
Quitte à diminuer $c$ ou à augmenter $M$, on peut en outre supposer que $\frac{c}{M}\le 1$. Maintenant,
\begin{eqnarray*}
&& \tilde{\P}_{\lambda} \left(t(y) \ge \frac{\|y\|}c +t, \; \tau=+\infty\right) \\
& \le & \tilde{\P}_{\lambda} \left( u_K \ge \frac{tc}{6M} \right)+ \tilde{\P}_{\lambda} \left( u_K \le \frac{tc}{6M}, \; \xi^0_{u_K} \not\subset B_{tc/3} \right) \\
&& +\tilde{\P}_{\lambda} \left(\tau=+\infty, \; u_K \le \frac{tc}{6M}, \; \xi^0_{u_K} \subset B_{tc/3}, \; t(y) \ge \frac{\|y\|}c +t\right).
\end{eqnarray*}
Le premier terme est bien contrôlé par l'existence de moments exponentiels pour $u_K$ donnée par le troisième point du lemme~\ref{restart} de redémarrage: il existe $C,\alpha>0$ tels que pour tout $\lambda \in \Lambda$, pour tout $t\ge 0$,
$$\tilde{\P}_{\lambda} \left(u_K \ge \frac{tc}{6M}\right) \le C\exp\left(-\frac{\alpha c t}{6M}\right).$$
Le second terme est contrôlé à l'aide de~(\ref{blabla}):
\begin{eqnarray*}
\tilde{\P}_{\lambda} \left(u_K \le \frac{tc}{6M}, \; \xi^0_{u_K} \not\subset B_{tc/3} \right) & \le & \P_{\lambda}(\exists t\ge0, \; \xi^0_t \not\subset B_{Mt+\frac{tc}6}) \le A\exp\left(-B\frac{tc}6\right).
\end{eqnarray*}
Il reste à contrôler le dernier terme. On note ici
$$t'(y)=\inf\{t \ge 0:\; y \in \zeta^0_t\}.$$
Rappelons que si $\tau=+\infty$, alors $\xi_{u_K} \neq \varnothing$ et $z_K$ est bien défini.
Comme $t(y)$ est un temps d'entrée et que $\xi^0_t\supset\zeta^0_t $ pour tout $t$, on a, sur $\{\tau=+\infty\}$,
$$t(y)\le  u_{K}+t'(y-z_K) \circ T_{z_K} \circ \theta_{u_K}.$$
Si $u_K \le \frac{tc}{6M} \le \frac{t}6$, alors 
$t(y) \le \frac{t}{6}+t'(y-z_K)\circ T_{z_K} \circ \theta_{u_K}$.
Si, de plus, $\xi^0_{u_K} \subset B_{tc/3}$, on a $\|y \| \ge \|y-z_K\|-\frac{tc}3$,
ce qui donne, avec deuxième point du lemme~\ref{restart},
\begin{eqnarray*}
&& \tilde{\P}_{\lambda} \left(\tau=+\infty, \; u_K \le \frac{ct}{6M}, \; \xi^0_{u_K} \subset B_{ct/3}, \; t(y) \ge \frac{\|y\|}c +t\right) \\
& \le & \tilde{\P}_{\lambda}\left(\tau=+\infty, 
\;t'(y-z_K) \circ T_{z_K} \circ \theta_{u_K} \ge \frac{\|y-z_K\|}c +\frac{t}{2}\right) \\
& \le & \P_\lambda(\tau=+\infty) \sup_{z \in \Zd}\Pbarre_{\lambda_{\min}}\left(t(y-z)  \ge \frac{\|y-z\|}c +\frac{t}2\right)
 \le  A\exp(-Bt/2),
\end{eqnarray*}
où la dernière égalité provient de~(\ref{blibli}). Ceci termine la preuve.
\finpreuve \end{proof}

\begin{proof}[\debutpreuve de (\ref{petitsouscouple})]
Soit $s \ge 0$, et notons $n$ la partie entière de $s$. Soit $\gamma>0$ fixé, dont la valeur sera précisée ultérieurement.
\begin{eqnarray*}
&& \Pbarre(0\notin K'_s) 
 =  \Pbarre(\exists t \ge s: \; 0 \not\in K_t) \\
& \le & \sum_{k=n}^{+\infty}\Pbarre(B_{\gamma k}\not\subset K_k)+
\sum_{k=n}^{+\infty}\Pbarre(B_{\gamma k}\subset K_k, \; \exists t\in [k,k+1) \text{ tel que } 0\not\in K_t).
\end{eqnarray*}

Commençons par contrôler la deuxième somme. Fixons $k \ge n$. Supposons que  $B_{\gamma k}\subset K_k$ et considérons $t\in [k,k+1) \text{ tel que } 0\not\in K_t$. Il existe donc  $x\in\Zd$ tel que $0\in \xi^x_t\backslash \xi^0_t$. Comme $0\in \xi^x_t$ et $t \ge k$, il existe $y \in \Zd$ tel que $y \in \xi^x_k$ et $0 \in \xi^y_{t-k} \circ \theta_k$. Si $y \in B_{\gamma k}\subset K_k$, alors $\xi^{0}_k(y)=\xi^{\Zd}_k(y)=1$, ce qui implique que $y \in \xi^{0}_k$, ce qui à son tour, vu que $0 \in \xi^y_{t-k} \circ \theta_k$, implique que
$0\in \xi^0_t$, et contredit l'hypothèse $0\not\in \xi^0_t$. Ainsi, nécessairement, $y \not\in B_{\gamma k}$, et donc:
\begin{eqnarray*}
&& \Pbarre_{\lambda}(B_{\gamma k}\subset K_k,\;\exists t\in [k,k+1)\text{ tel que } 0\not\in K_t) \\
& \le & \frac1{\P_\lambda(\tau=+\infty)}\P_{\lambda}\left(\theta_k^{-1} \left( 0\in \miniop{}{\cup}{s\in [0,1]} \xi^{\Zd \backslash B_{\gamma k}}_s \right) \right) \\
& \le &  \frac1\rho\P_{\lambda}\left(0\in \miniop{}{\cup}{s\in [0,1]} \xi^{\Zd \backslash B_{\gamma k}}_s\right) \\
& \le & \frac1\rho \P_{\lambda}\left(\miniop{}{\cup}{s\in [0,1]} \xi^{0}_s\not\subset B_{\gamma k}\right)=  \frac1{\rho}\P_{\lambda}(H_1^0\not\subset B_{\gamma k}).
\end{eqnarray*}
Comme le modèle de Richardson de paramètre $\lambda_{\max}$ domine stochastiquement le processus de contact en environnement $\lambda$, on contrôle ce dernier terme à l'aide du lemme~\ref{Richardson1}.

Pour contrôler la première somme, il suffit de montrer qu'il existe des constantes strictement positives $A,B,\gamma$ telles que pour tout
$\lambda\in\Lambda$, pour tout $ t\ge0$
\begin{equation}
\P_{\lambda}(B_{\gamma t}\not\subset K_t, \; \tau^0=+\infty) \le A\exp(-B t).
\label{souscouple}
\end{equation}
La constante $\gamma$ dont nous avions reporté la définition est ainsi déterminée.

Le nombre de points contenus dans une boule étant polynomial en le rayon de la boule, il
suffit de montrer qu'il existe des constantes $A,B,c'>0$ telles que pour tout $t\ge 0$, pour tout $x \in \Zd$,
\begin{equation}
\label{monbutla}
\|x\|\le c't \Longrightarrow \tilde{\P}_{\lambda}(\xi^{0}(t)\ne\varnothing,x\in \xi^{\Zd}(t)\backslash \xi^{0}(t))\le A\exp(-Bt).
\end{equation}
Pour démontrer~(\ref{monbutla}), on va s'appuyer sur le résultat suivant, obtenu par Durrett~\cite{MR1117232} comme conséquence de la construction de Bezuidenhout et Grimmett~\cite{MR1071804}: si $\xi^0$ et $\tilde{\xi}^x$ sont deux processus de contact indépendants
de paramètre $\lambda>\lambda_c(\Zd)$, partant respectivement de $0$ et de $x$, alors il existe des constantes $A,B,\alpha$ strictement positives telles que pour tout $t\ge 0$, pour tout $x \in \Zd$,
\begin{equation}
\label{durrett}
\|x\|\le\alpha t \Longrightarrow\quad\P(\xi^0_t\cap\tilde{\xi}^x_t=\varnothing, \tilde{\xi}^x_t\ne \varnothing,{\xi}^0_t\ne \varnothing)\le A\exp(-Bt).
\end{equation}
Soient $\alpha$ et $M$ les constantes respectivement données par les équations~(\ref{durrett}) et~(\ref{richard}).
On pose $c'=\alpha/2$ et on choisit $\epsilon>0$ tel que $c'+2\epsilon M\le \alpha$.

Soient $a\in B^0_{\alpha t/4}$ et  $b\in B^x_{\alpha t/4}$. On pose
$$\alpha_{a,s}=\zeta^a_s\circ  \theta_{\epsilon t/2}\text{ et }\beta_{b,s}=\{y\in\Zd: \; b\in \zeta^{y}_s\circ \theta_{t(1-\epsilon/2)-s}\}.$$
Alors $(\alpha_{a,s})_{0\le s\le t/2(1-\epsilon)}$ et $(\beta_{a,s})_{0\le s\le t/2(1-\epsilon)}$
sont deux processus de contact indépendants de taux de naissance $\lambda_{\min}$ constant, partant respectivement de $a$ et $b$. 
Le processus $(\beta_{a,s})_{0\le s\le t/2(1-\epsilon)}$ est un processus de contact, mais pour lequel on a  retourné l'axe temporel dans la construction avec les processus de Poisson.
De manière analogue, on pose
$$\hat{\xi}_s^x=\{y\in\Zd :\; x\in \xi_s^y\circ \theta_{t-s}\}.$$
La loi de $(\hat{\xi}^x_{s})_{0\le s\le t/2}$ coïncide avec la loi de
 $({\xi}^x_{s})_{0\le s\le t/2}$.
On s'appuie sur les remarques  suivantes:
 \begin{itemize}
 \item Si $a\in\xi^0_{\epsilon t/2}$, que $\alpha_{a,(1-\epsilon) t/2}\cap \beta_{b,(1-\epsilon )t/2}\ne\varnothing$
et que
$b\in \hat{\xi}^x_{\epsilon t/2}$, alors $x\in\xi_t^0$.
\item Si $x\in\xi_t^{\Zd}$, alors $\hat{\xi}^x_{t/2}$  est non-vide.
 \item Si ${\xi}^0_{t}$ est non-vide, alors ${\xi}^0_{t/2}$ est non-vide.
 \end{itemize}
Ainsi, en posant
\begin{eqnarray*}
E^0 & = & \left\{\xi^0_{t/2}\ne\varnothing\right\}\backslash 
\left\{
\exists a\in B^0_{\alpha  t/4} \cap \xi^0_{\epsilon t/2}:\;
\alpha_{a,(1-\epsilon) t/2}\ne\varnothing
\right\} \\
\text{et } \quad \hat{E}^x & = & \left\{\hat{\xi}^x_{t/2}\ne\varnothing\right\}\backslash 
\left\{
\exists b\in B^x_{\alpha  t/4} \cap \hat{\xi}^x_{\epsilon t/2}:\;
\beta_{b,(1-\epsilon) t/2}\ne\varnothing
\right\}, 
\end{eqnarray*}
on obtient
\begin{eqnarray}
\label{decouplage}
\tilde{\P}_{\lambda}(\xi^{0}_t\ne\varnothing,\; x\in \xi^{\Zd}_t\backslash \xi^{0}_t) 
 & \le & \tilde{\P}_{\lambda}( \xi^{0}_{t/2} \ne \varnothing, \; \hat{\xi}^x_{t/2} \ne \varnothing,\; \xi^{0}_{t/2}\cap \hat{\xi}^x_{t/2}=\varnothing) \\
 & \le & \tilde{\P}_{\lambda}(E^0)+\tilde{\P}_{\lambda}(\hat{E}^x) +S \nonumber,
 \end{eqnarray}
 où
 $\displaystyle S
  =  \mathop{\sum_{a\in B^0_{\alpha  t/4}}}_{b\in B^x_{\alpha  t/4}}\tilde{\P}_{\lambda}\left(\alpha_{a,\frac{(1-\epsilon) t}2}\ne\varnothing, \; \beta_{b,\frac{(1-\epsilon) t}2}\ne\varnothing,\; \alpha_{a,\frac{(1-\epsilon) t}2}\cap\beta_{b,\frac{(1-\epsilon) t}2}=\varnothing\right)$. \\
Pour chaque couple $(a,b)$ apparaissant dans $S$, on a
$\|a-b\|\le \|a\|+\|b-x\|+\|x\|\le \alpha t/4+ \alpha t/4+\alpha t/2=\alpha t$, ce qui permet d'utiliser~(\ref{durrett}), et donne l'existence de constantes $A,B,C'>0$ telles que 
$$S \le C'(1+\alpha t/4)^{2d}A\exp(-B(1-\epsilon)t/2).$$
En retournant à nouveau le temps, on voit
que $\tilde{\P}_{\lambda}(\hat{E}^x)=\tilde{\P}_{\T{x}\lambda}(E^0)$; il suffit donc de contrôler
$\tilde{\P}_{\lambda}(E^0)$ uniformément en $\lambda$.
Posons
$$E_1=\{\xi^{0}_{t/2}\ne\varnothing\}\backslash \{\exists a\in\Zd: \; a\in\xi^0_{\epsilon t/2},\; \alpha_{a,(1-\epsilon) t/2}\ne\varnothing\}.$$
On a 
$\tilde{\P}_{\lambda}(E^0)\le \tilde{\P}_{\lambda}(E_1)+\tilde{\P}_{\lambda}(\xi^0_{\epsilon t/2}\not\subset B^0_{\alpha t/4})$.
D'après le choix de $\epsilon$ et l'inégalité~(\ref{richard}), on a
$$\forall \lambda\in\Lambda\quad\forall t\ge 0\quad \tilde{\P}_{\lambda}(\xi^0_{\epsilon t/2}\not\subset B(0,\alpha t/4))\le A\exp(-B\epsilon t/2).$$
\`A l'aide du lemme~\ref{restart} de redémarrage, on voit que
$$\tilde{\P}_{\lambda}(u_K> \epsilon t/2)\le \beta\exp(-\alpha \epsilon t/2).$$
Supposons donc que $u_K\le\epsilon t/2$ et que $\xi^{0}_{t/2}\ne\varnothing$: $z_K$ est donc bien défini et 
on a $\tau'_{z_K} \circ \theta_{u_K}=+\infty$. 
Ainsi, il existe une branche d'infection infinie dans le processus couplé
en environnement $\lambda_{\min}$ partant de $\xi_{u_K}^0$. Cette branche
contient au moins un point $a\in\xi^0_{(1-\epsilon)t/2}$.
Par construction $a\in\xi^0_{(1-\epsilon)t/2}$ et
$\alpha_{a,(1-\epsilon) t/2}\ne\varnothing$, ce qui achève la preuve de~(\ref{souscouple}).
\finpreuve \end{proof}

\noindent
\textbf{Remarque} Au cours de cette preuve, on a montré que pour tout $\lambda \in \Lambda$,
$$\lim_{t \to+\infty} \tilde{\P}_{\lambda}( \xi^{0}_{t} \ne \varnothing, \; \hat{\xi}^x_{t} \ne \varnothing,\; \xi^{0}_{t}\cap \hat{\xi}^x_{t}=\varnothing)=0,$$
ce qui est l'ingrédient essentiel de la preuve du théorème~\ref{thCC} de convergence complète. On pourra se reporter à l'article de Durrett~\cite{MR1117232} pour les détails de la preuve dans le cas du processus de contact classique.

\section*{Appendice: preuve des théorèmes ergodiques presque sous-additifs~\ref{therg} et~\ref{thergdeux}}

\begin{proof}[\debutpreuve du théorème~\ref{therg}]
Posons $a_p=C_p^{1/\alpha}$ et $u_n=\E[f_n]$: on a pour tous $n,p \in \N$
$\E[r_{n,p}^+]\le (\E[(r_{n,p}^+)^{\alpha}])^{1/\alpha}\le C_p^{1/\alpha}=a_p,$
d'où 
$$u_{n+p}\le u_n+u_p+\E[g_p]-\E[r_{n,p}]\le u_n+u_p+\E [g_p]+a_{p}.$$
Le terme général d'une série convergente tend vers $0$, donc $C_p=o(p^{\alpha})$, soit $a_p=o(p)$. Comme $\frac{a_n+\E [g_n]}n$ tend vers $0$, la convergence de $u_n/n$ est classique (voir par exemple Derriennic~\cite{MR704553}). La limite $\mu$ est finie car $u_n\ge cn$ pour tout $n$.

On va montrer que $\underline f=\miniop{}{\liminf}{n \to +\infty}\frac{f_{n}}n $ domine stochastiquement une variable aléatoire dont
l'espérance dépasse $\mu$.

Pour toute variable aléatoire $X$, on note $\mathcal L(X)$ sa loi sous $\P$. 
On note $\mathcal{K}$ l'ensemble des mesures de probabilités sur $\R_+^{\N^*}$ dont toutes les lois marginales~$m$ vérifient:
$$\forall t>0\quad m(]t,+\infty[)\le \P(f_1+g_1>t/2)+C_1(2/t)^{\alpha}.$$
On définit, pour tout $k \ge 1$,
$$\Delta_k=f_{k+1}-f_{k},$$
et on note $\Delta$ le processus $\Delta=(\Delta_k)_{k \ge 1}$.
Pour tout $k\in \N$, la sous-additivit\'e assure que $\Delta_k\le (f_1+g_1)\circ\theta_k+r_{k,1}$, 
donc, pour tout $t>0$,
\begin{eqnarray*}
\P(\Delta_k>t) & \le & \P((f_1+g_1)\circ\theta_k>t/2)+\P(r_{k,1}^+>t/2) \\
& \le & \P(f_1+g_1>t/2)+C_1(2/t)^{\alpha}.
\end{eqnarray*}
Ceci assure que $\Delta \in \mathcal K$. 

En notant $s$ le shift: $s((u_k)_{k \ge 0})=(u_k)_{k \ge 1}$,
on regarde alors la suite de probabilités sur $\R^{\N^*}$:
$$(L_n)_{n \ge 1}=\left( \frac1n \sum_{j=1}^{n} \mathcal L(s^j \circ \Delta) \right)_{n \ge 1}.$$
Comme $\mathcal{K}$ est convexe et invariant par $s$, la suite $(L_n)_{n\ge 1}$ prend ses valeurs dans $\mathcal{K}$. 
Soient $n,k\ge 1$.
\begin{eqnarray*}
\int \pi_k(x)\ dL_{n}(x)
   & = &  \frac1{n} \sum_{j=1}^{n} \E (\pi_k(s^j \circ \Delta)) \\
& = &  \frac1{n} \sum_{j=1}^{n} \E (f_{k+j+1}-f_{k+j}) 
 =  \frac1{n} (\E [f_{n+k+1}]-\E[f_{k+1}]).
\end{eqnarray*}
Posons $M_k=\miniop{}{\sup}{n\ge 1} \frac1{n} |\E [f_{n+k+1}]-\E[f_{k+1}]|$.
La convergence de $u_n/n$ entraîne que $M_k$ est fini. De m\^eme, par presque sous-additivit\'e,
\begin{eqnarray*}
\int \pi_k^+(x)\ dL_{n}(x)
   & = &  \frac1{n} \sum_{j=1}^{n} \E (\pi_k^+(s^j \circ \Delta)) \\
& = &  \frac1{n} \sum_{j=1}^{n} \E [(f_{k+j+1}-f_{k+j})^+] 
 \le \E [f_1^+]+\E[g_1^+]+a_1.
\end{eqnarray*}
Ainsi, on a 
\begin{eqnarray*}\int |\pi_k(x)|\ dL_{n}(x)&\le&  \int 2\pi_k^+(x)\ dL_{n}(x)+ |\int \pi_k(x)\ dL_{n}(x)|\\ &\le & M_k+2\E [f_1^+]+2\E[g_1^+]+2a_1.
\end{eqnarray*}
Soit $\mathcal{K}'$ l'ensemble des lois $m$ sur $\R^{\N^*}$ tel que pour tout $k$,
$\int |\pi_k| \ dm\le 2 M_k+\E [f_1^+]+\E[g_1^+]+a_1$.
L'ensemble $\mathcal{K}'$ est compact pour la topologie de la convergence en loi et
la suite $(L_n)_{n\ge 1}$ prend ses valeurs dans $\mathcal{K}'$. Soit donc $\gamma$ un point d'accumulation de $(L_n)_{n \ge 1}$ et $(n_k)_{k\ge 1}$ une suite d'indices telle que $L_{n_k}\Longrightarrow\gamma$.
Par construction, $\gamma$ est invariante par le shift $s$.

Maintenant, la suite des lois de la première coordonnée $\pi_1(x)$ sous $(L_{n_k})_{k \ge 0}$ converge faiblement vers la  loi de la première coordonnée  sous $\gamma$. Par ailleurs, par définition de $\mathcal{K}$, cette famille de lois a une partie positive uniformément intégrable, donc
$\int \pi_1^+\ d\gamma=\lim \int \pi_1^+ \ dL_{n_k}$.
Cependant, le lemme de Fatou nous dit que
$\int \pi_1^-\ d\gamma\le\miniop{}{\liminf}{k\to +\infty} \E \pi_1^- \ dL_{n_k}$, d'où finalement
$$\int \pi_1\ d\gamma\ge\miniop{}{\liminf}{k\to +\infty}\int \pi_1 \ dL_{n_k}=\mu.$$

Soit $Y=(Y_k)_{k \ge 1}$ un processus de loi $\gamma$.
Comme $\gamma$ est invariante par le shift~$s$, le théorème de Birkhoff nous dit
que la suite $(\frac1n \sum_{k=1}^{n}Y_k)_{n \ge 1}$ converge p.s.\  vers une variable aléatoire réelle $Y_{\infty}$, qui vérifie donc $\E(Y_{\infty})=\int \pi_1\ d\gamma\ge\mu$.

Il nous reste à voir que la loi de $Y_{\infty}$ est stochastiquement dominée par la loi de $ \underline{f}=\miniop{}{\liminf}{n \to +\infty} \frac1{n}f_{n}$.
On va montrer que pour tout $a\in\R$, $\P(Y_{\infty}>a)\le \P(\underline{f}>a)$.
Par continuité à droite, il suffit d'obtenir l'inégalité pour $a$ dans une partie dense de $\R$. On supposera donc que $a$ est un point de continuité de la loi de $ \underline{f}$.

$$\left\{Y_{\infty}>a\right\}=\left\{\miniop{}{\liminf}{n\to +\infty}\frac{Y_1+\dots+Y_n}{n}>a\right\}=\miniop{}{\bigcup}{k\ge 1}\left\{ \miniop{}{\inf}{n\ge k}\frac{Y_1+\dots+Y_n}{n}>a\right\}.$$
D'où 
\begin{eqnarray*}
\P(Y_{\infty}>a) & = & \miniop{}{\limsup}{k\to +\infty}\P_{Y}\left(\miniop{}{\inf}{i\ge k} \frac{\pi_1+\dots+\pi_i}{i}>a\right)\\
& = & \miniop{}{\limsup}{k\to +\infty}\miniop{}{\inf}{n\ge k}\P_{Y}\left(\miniop{}{\inf}{k\le i\le n} \frac{\pi_1+\dots+\pi_i}{i}>a\right)\\
& \le & \miniop{}{\limsup}{k\to +\infty}\miniop{}{\inf}{n \ge k} \miniop{}{\liminf}{K \to +\infty}\frac{1}{n_K}\sum_{j=1}^{n_K}\P
\left(\miniop{}{\inf}{k\le i\le n} \frac{\pi_1+\dots+\pi_i}{i} \circ s^j\circ\Delta >a\right).
\end{eqnarray*}
Soit $\epsilon>0$. On a, pour $k,n,j$ fix\'es,
\begin{eqnarray*} & &\P\left(\miniop{}{\inf}{k\le i\le n} \frac{\pi_1+\dots+\pi_i}{i} \circ s^j\circ\Delta >a\right)\\
& = & \P\left(\miniop{}{\inf}{k\le i\le n} \frac{f_{j+i+1}-f_{j+1}}{i}  >a\right)\\
& \le & \P\left(\miniop{}{\inf}{k\le i\le n} \frac{(f_{i}+g_i)\circ\theta_{j+1}+r_{j+1,i}}{i} >a\right)\\
& \le & \P\left(\miniop{}{\inf}{k\le i\le n} \frac{(f_{i}+g_i)\circ\theta_{j+1}}{i} >a-\epsilon\right)+\P\left(\miniop{}{\sup}{i\ge k} \frac{r_{j+1,i}}{i} >\epsilon\right)\\
\end{eqnarray*}
D'une part, on a
\begin{eqnarray*}
\P\left(\miniop{}{\sup}{i\ge k} \frac{r_{j+1,i}}{i} >\epsilon\right) & \le &
\P\left(\miniop{}{\sum}{i\ge k} \left(\frac{r^+_{j+1,i}}{i}\right)^{\alpha} >\epsilon^{\alpha}\right)
 \le \epsilon^{-\alpha}\miniop{}{\sum}{i\ge k}\frac1{i^{\alpha}} \E[(r^+_{j+1,i})^{\alpha}] \\
 &\le & \epsilon^{-\alpha}\miniop{}{\sum}{i\ge k} \frac{C_i}{i^{\alpha}}.
\end{eqnarray*}
Notons que ce terme est ind\'ependant de $j$ et de $n$.
D'autre part,
\begin{eqnarray*} \P\left(\miniop{}{\inf}{k\le i\le n} \frac{(f_{i}+g_i)\circ\theta_{j+1}}{i} >a-\epsilon\right) & = &  \P\left(\miniop{}{\inf}{k\le i\le n} \frac{f_{i}+g_i}{i} >a-\epsilon\right),
\end{eqnarray*}
qui est ind\'ependant de $j$. Ainsi quel que soit $\epsilon>0$, on a pour tous $n,k$, avec $n\ge k$:
$$ \miniop{}{\liminf}{K \to +\infty}\frac{1}{n_K}\sum_{j=1}^{n_K}\P
\left(\miniop{}{\inf}{k\le i\le n} \frac{\pi_1+\dots+\pi_i}{i} \circ s^j\circ\Delta >a\right)\le \epsilon^{-\alpha}\miniop{}{\sum}{i\ge k} \frac{C_i}{i^{\alpha}}+\P\left(\miniop{}{\inf}{k\le i\le n} \frac{f_{i}+g_i}{i} >a-\epsilon\right),$$puis
\begin{eqnarray*}
\miniop{}{\inf}{n\ge k} \miniop{}{\liminf}{K \to +\infty}\frac{1}{n_K}\sum_{j=1}^{n_K}\P
\left(\miniop{}{\inf}{k\le i\le n} \frac{\pi_1+\dots+\pi_i}{i} \circ s^j\circ\Delta >a\right)&\le& \epsilon^{-\alpha}\miniop{}{\sum}{i\ge k} \frac{C_i}{i^{\alpha}}\\& &+\miniop{}{\inf}{n\ge k}\P\left(\miniop{}{\inf}{k\le i\le n} \frac{f_{i}+g_i}{i} >a-\epsilon\right).
\end{eqnarray*}
Enfin
\begin{eqnarray*}
\P(Y_{\infty}>a)
& \le & \miniop{}{\limsup}{k\to +\infty}\miniop{}{\inf}{n \ge k} \P\left(\miniop{}{\inf}{k\le i\le n} \frac{f_{i}+g_i}{i} >a-\epsilon\right)+\miniop{}{\limsup}{k\to +\infty}\epsilon^{-\alpha}\miniop{}{\sum}{i\ge k} \frac{C_i}{i^{\alpha}} \\
& \le & \miniop{}{\limsup}{k\to +\infty} \P\left(\miniop{}{\inf}{i\ge k} \frac{f_{i}+g_i}{i} >a-\epsilon\right) \\
& \le& \P\left(\miniop{}{\liminf}{i\to +\infty} \frac{f_{i}+g_i}{i} >a-\epsilon\right)
=  \P\left(\miniop{}{\liminf}{i\to +\infty} \frac{f_{i}}{i} >a-\epsilon\right),
\end{eqnarray*}
\'etant donn\'e que $g_i/i$ tend presque sûrement vers $0$.
Faisant alors tendre $\epsilon$ vers zéro, on obtient
$$\P(Y_{\infty}>a)\le \P \left(\miniop{}{\liminf}{i\to +\infty} \frac{f_{i}}{i}\ge a \right)=\P(\underline{f}>a).$$

Reste \`a voir l'invariance par les $\theta_n$ de $\underline f$.
Soit $n\ge 1$ fixé. On a
$$\E \left[ \sum_{p=1}^{+\infty} \left(\frac{r^+_{n,p}}p\right)^{\alpha} \right] 
= \sum_{p=1}^{+\infty} \E \left[\left(\frac{r^+_{n,p}}p \right)^{\alpha} \right] \le\sum_{p=1}^{+\infty}\frac{C_p}{p^{\alpha}}<+\infty.$$ 
En particulier, $\frac{r^+_{n,p}}p$ tend presque sûrement vers $0$ lorsque $p$ tend vers l'infini.
Comme $f_{n+p}\le f_n+f_p\circ\theta_n+g_p\circ\theta_n+r_{n,p}^+$, en divisant par $n+p$ et en faisant tendre $p$ vers $+\infty$, il vient que
$$\underline{f} \le \underline{f} \circ \theta_n\quad\text{p.s.}$$
Comme $\theta_n$ laisse $\P$ invariante, on en déduit classiquement que $\underline{f}$ est invariante par $\theta_n$. 
\finpreuve
\end{proof}

\textbf{Remarques} Dans le présent article, il n'est pas fait usage de la possibilité de prendre $g_p$ non nul. Dans le cas où les $(g_p)$ ne sont pas nuls mais où les
$r_{n,p}$ sont, en revanche, nuls, on obtient un résultat qui ressemble beaucoup au théorème 3 de Schürger~\cite{MR1127716}. 
Comme Schürger~\cite{MR833959}, nous reprenons le procédé de couplage avec un processus ``stationnarisé'' initié par Durrett~\cite{MR586774} et popularisé par Liggett~\cite{MR806224}. Ici, le raisonnement est raffiné en établissant directectement une comparaison stochastique de la variable aléatoire $Y$, et non du processus $(Y_n)_{n\ge 1}$ dont elle est la limite inférieure.

Dans la plupart des théorèmes ergodiques  presque sous-additifs existants, la convergence presque sûre  requiert des conditions assez fortes (du type stationnarité en loi) du défaut de sous-additivité. 
Ici, en revanche, on obtient un comportement presque sûr en considérant une 
condition sur les moments (d'ordre supérieur à 1) du défaut de sous-additivité. A contrario, on sait qu'un contrôle du moment d'ordre 1 du défaut de sous-additivité n'est pas suffisant pour obtenir un comportement presque sûr (voir la remarque de Derriennic~\cite{MR704553} et le contre-exemple de Derriennic et Hachem~\cite{MR939537}).

\begin{proof}[\debutpreuve du théorème~\ref{thergdeux}]
Il reste à montrer que $\displaystyle \E \left(\miniop{}{\limsup}{n \to +\infty} \frac{f_{n}}n \right) \le \mu$. \\
On  fixe $k \ge 1$. En utilisant la sous-additivité, on a pour tout $n\ge 0$ et pour tout $0 \le r \le k-1$:
\begin{eqnarray*}
f_{nk+r}& \le & f_{nk}+(f_r+g_r)\circ\theta_{nk}+r_{nk,r}^+\\
& \le &\left(\sum_{i=0}^{n-1}f_k\circ(\theta_k)^i\right)+\left(f_{nk}-\sum_{i=0}^{n-1}f_k\circ(\theta_k)^i\right)^++(f_r+g_r)\circ\theta_{nk}+r_{nk,r}^+
\end{eqnarray*}
Comme $\theta_k$ laisse $\P$ invariante, le théorème de Birkhoff nous donne la convergence presque sûre et dans $L^1$:
$$\lim_{n \to +\infty} \frac1n \sum_{j=0}^{n-1} \frac{f_k \circ (\theta_k)^j}{k} =\frac{\E (f_k|\mathcal{I}_k)}k,$$
où $\mathcal{I}_k$ est la tribu des invariants par $\theta_k$.
Contrôlons maintenant les termes résiduels.
Comme la famille finie $((f_r+g_r))_{0 \le r \le k-1}$ est équi-intégrable et que $\theta_k$ laisse $\P$ invariant, la famille $(\miniop{}{\sup}{0 \le r \le k-1}(f_r+g_r)\circ \theta_k^n)_{n \ge 1}$ est équi-intégrable, ce qui assure la convergence
presque sûre et dans $L^1$: 
\begin{equation*}
\label{reste}
\lim_{n \to +\infty} \frac{1}{n}\sup_{0 \le r \le k-1} (f_r+g_r)\circ (\theta_k)^n=0.
\end{equation*}
On a $\displaystyle \sum_{n=1}^{+\infty} \E[(\frac{r^+_{nk,r}}n)^{\alpha}]\le\sum_{n=1}^{+\infty}\frac{C_r}{n^{\alpha}}<+\infty,$
ce qui entraîne, comme précédemment, que $r_{nk,r}^+/n$ tend presque sûrement vers $0$.
Finalement, 
\begin{eqnarray*}
 \forall r\in\{0,\dots,k-1\}\quad \miniop{}{\limsup}{n \to +\infty}\frac{f_{nk+r}}{nk+r} & \le & \frac {\E [f_k|\mathcal{I}_k]}k,
\end{eqnarray*}
et donc $\displaystyle \E[\miniop{}{\limsup}{n \to +\infty}\frac{f_{n}}{n}] \le \frac {\E [f_k]}k.$
On achève la preuve en faisant tendre $k$ vers~$+\infty$.
\finpreuve \end{proof} 

\textbf{Remarque} Dans le cas où il n'y a pas de défaut de sous-additivité, les hypothèses du théorème~\ref{thergdeux} sont évidemment vérifiées;  on obtient ainsi un théorème ergodique sous-addititif qui ressemble beaucoup à celui de Liggett~\cite{MR806224} sans toutefois lui être strictement comparable, au sens où aucun des deux n'entraîne l'autre.

En effet, prolongeant une remarque faite par Kingman dans son cours de Saint-Flour~\cite{MR0438477} (page 178), on peut noter que l'hypothèse de l'article original de Kingman, à savoir la stationnarité du processus doublement indexé $(X_{s,t})_{s\ge 0,t\ge 0}$, peut être affaiblie de deux manières:
\begin{itemize}
\item Soit en supposant que pour tout $k$, le processus $(X_{(r-1)k,rk})_{r\ge 1}$ est stationnaire -- c'est l'hypothèse qui sera utilisée par Liggett~\cite{MR806224}.
\item Soit en supposant que la loi de $X_{n,n+p}$ ne dépend que de $p$. Cette hypothèse, suggérée par Hammersley et Welsh, est celle que nous faisons ici, ou que fait Schürger dans~\cite{MR1127716}.
\end{itemize}
Il faut noter toutefois que dans la preuve de Liggett~\cite{MR806224}, l'hypothèse spéciale de stationnarité n'intervient que dans la partie réputée facile du contrôle de la limite supérieure.

A l'époque, Kingman semble penser que le premier jeu d'hypothèses l'a emporté sur le second, au vu des applications possibles. Plus de trente ans plus tard, les progrès des théorèmes ergodiques sous-additifs -- en particulier s'agissant du contrôle de la limite inférieure -- amènent à nuancer cette affirmation.

\vspace{0.5cm}
{Les auteurs tiennent à remercier le rapporteur anonyme qui a décelé une erreur dans une première version de cet article.}


\def\refname{Références}
\bibliographystyle{plain}


\end{document}